\newtheorem*{thm1}{Theorem}
\newtheorem{thm}{Theorem}
\newtheorem{lem}{Lemma}[subsection]
\newtheorem{prop}{Proposition}[subsection]
\newtheorem{cor}{Corollary}[subsection]
\newtheorem{exm}{Example}[subsection]
\newtheorem{ax}{Remark}[subsection]
\numberwithin{equation}{section}
\begin{document}

\title{COMBINATORIAL BASES Of PRINCIPAL SUBSPACES For Affine Lie ALGEBRA of Type \texorpdfstring{$B_2^{(1)}$}{B_2^{(1)}}}

\author{Marijana Butorac}

\address{University of Rijeka, Department of Mathematics, Radmile Matej\v{c}i\'{c} 2, 51000 Rijeka, Croatia}

\email{mbutorac@math.uniri.hr}

\subjclass[2000]{Primary 17B67; Secondary 17B69, 05A19}

\keywords{affine Lie algebras, vertex operator algebras, principal subspaces, combinatorial bases}

\begin{abstract} 
We consider principal subspaces $W_{L(k\Lambda_0)}$ and $W_{N(k\Lambda_0)}$ of standard
module $L(k\Lambda_0)$ and generalized Verma module $N(k\Lambda_0)$ at level $k\geq 1$ for affine Lie algebra of type $B_2^{(1)}$. By using the theory of vertex ope\-rator algebras, we find combinatorial bases of principal subspaces in terms of quasi-particles. From quasi-particle bases, we obtain character formulas for $W_{L(k\Lambda_0)}$ and $W_{N(k\Lambda_0)}$.
\end{abstract}

\maketitle

\section*{INTRODUCTION}
The principal subspaces of the standard $A_l^{(1)}$-modules play an important role in the study of the Rogers-Ramanujan type identities by 
means of representation theory of affine Lie algebras (see in particular \cite{FS}, \cite{G}, \cite{Cal1}-\cite{Cal2}, \cite{CLM1}- \cite{CLM2}, \cite{CalLM1}-\cite{CalLM3}, \cite{AKS}). 
\par These subspaces were introduced and studied by B. L. Feigin and A. V. Stoyanovsky in \cite{FS}, where they gave a construction of monomial bases of principal subspaces of the standard $A_1^{(1)}$-modules. The bases monomials (which act on a highest weight vector of standard module) can be interpreted in physicist terms, as statistically interacting quasi-particles of color $1$ and charge $1$, whose energies satisfy the difference-two condition (cf. \cite{FS}, \cite{G}, \cite{DKKMM}). From these bases, Feigin and Stoyanovsky obtained the character formulas of the whole standard modules, which are the same as the character formulas of standard $A_1^{(1)}$-modules obtained by J. Lepowsky and M. Primc in \cite{LP}. 
\par This approach was further developed by G. Georgiev in \cite{G}, who constructs the combinatorial bases of  principal subspaces associated to a family of standard $A_l^{(1)}$-modules, in terms of quasi-particles of color $i$, ($1\leq i \leq l$) and charges $\geq 1$, whose energies comply certain difference conditions. In order to prove the linear independence of the spanning sets, Georgiev used the theory of vertex operator algebras, including certain intertwining operators constructed in \cite{DL}. Georgiev's results can be generalized to principal subspaces of standard modules for untwisted affine Lie algebras of types $A$, $D$ and $E$. 
\par In this paper, we use Georgiev's ideas to construct combinatorial bases of principal subspaces of standard modules $L(k{\Lambda}_0)$ and principal subspaces of generalized Verma modules $N(k{\Lambda}_0)$ for affine Lie algebra of type $B_2^{(1)}$. In our case, we neither have a Frenkel-Kac's construction of level $1$ modules nor Dong-Lepowsky's intertwining operators (cf. \cite{DL}). Our main ``instrument'' in the construction of a quasi-particle bases is the theory of vertex o\-perator algebras. Each basis vector is a finite product of  coefficients of certain vertex operators acting on the highest weight vector. Using the vertex algebra structure of $L(k{\Lambda}_0)$ and simple currents, we construct operators which are used in proving linear independence of our bases. 
\par Let $\mathfrak{g}$ be a simple complex Lie algebra of type $B_{2}$, $\mathfrak{h}$ be a Cartan subalgebra
of $\mathfrak{g}$ and $\Pi=\left\{\alpha_{1},\alpha_{2}\right\}$ the set of simple roots. For every root $\alpha$ in the root system $R$, we fix a root vector $x_{\alpha} \in \mathfrak{n}_{\alpha}$, where $\mathfrak{n}_{\alpha}$ is one-dimensional subalgebra of $\mathfrak{g}$. Then we have a triangular decomposition
\begin{equation*}\mathfrak{g}=\mathfrak{n}_{-}\oplus\mathfrak{h}\oplus \mathfrak{n}_{+}.
\end{equation*}
\par Let $\widetilde{\mathfrak{g}}=
\mathfrak{g}\otimes \mathbb{C}[t,t^{-1}]\oplus \mathbb{C}c \oplus \mathbb{C}d$ be the corresponding (untwisted) affine Lie algebra (cf. \cite{K}). For $x \in \mathfrak{g}$ and $m \in \mathbb{Z}$ we write $x(m)=x \otimes t^m$. Denote by $\mathcal{L}({\mathfrak{n}}_{+})$ subalgebra of $\tilde{\mathfrak{g}}$: 
\begin{equation*}\mathcal{L}(\mathfrak{n}_{+})=\mathfrak{n}_{+} \otimes \mathbb{C}[t,t^{-1}].
\end{equation*}
\par As in \cite{G} and \cite{FS}, for fixed positive integer level $k$, we define the principal subspace $W_{L(k{\Lambda}_{0})}$ of standard module $L(k{\Lambda}_{0})$ as
\begin{equation*} W_{L(k{\Lambda}_{0})}=
U(\mathcal{L}(\mathfrak{n}_{+}))v_{k{\Lambda}_{0}},\end{equation*}
where $v_{k{\Lambda}_0}$ is a highest weight vector. Following Georgiev in \cite{G}, we define quasi-particle $x_{r\alpha_i}(m)$ of color $i$, charge $r$ and energy $-m$   
\begin{equation*}
x_{r\alpha_{i}}(m)=\text{Res}_z \left\{ z^{m+r-1}\underbrace{x_{\alpha_{i}}(z) \cdots  x_{\alpha_{i}}(z)}_{\text{r factors}}\right\}
\end{equation*}
 where $x_{\alpha_{i}}(z)=\sum_{j\in \mathbb Z} x_{\alpha_i}(j) z\sp{-j-1}$ are vertex operators associated to elements $x_{\alpha_1}$,  $x_{\alpha_2} \in L(k{\Lambda}_{0})$. Every basis vector is monomial vector $bv_{k{\Lambda}_{0}}$, where monomial $b$ is composed of quasi-particles of colors $i = 1$ and $i = 2$, certain charges $r \geq 1$ and energies 
$-m$ which comply certain difference conditions. 
\par In order to find difference conditions, we use the theory of vertex operator algebras, relations on vertex operator algebra $N(k{\Lambda}_{0})$ and principal subspace $W_{N(k{\Lambda}_{0})} \subset N(k{\Lambda}_{0})$, defined as \begin{equation*}W_{N(k{\Lambda}_{0})}=U(\mathcal{L}(\mathfrak{n}_{+}))v_{N(k{\Lambda}_{0})},\end{equation*}
where $v_{N(k{\Lambda}_{0})}$ is a highest weight vector of $N(k{\Lambda}_{0})$. These relations will lead us to the spanning sets for $W_{L(k{\Lambda}_{0})}$ and $W_{N(k{\Lambda}_{0})}$.
\par The proof of linear independence of spanning sets for $W_{L(k{\Lambda}_{0})}$ is carried out by induction, using a coefficient of a certain  intertwining operator, as well as other operators from the vertex algebra theory. The other important ``ingredient'' in the proof of linear independence is a projection (defined similarly as a projection in \cite{G}) distributing the constituting quasi-particles of monomials among factors of the tensor product of $\mathfrak{h}$-weight subspaces of standard modules, thus ensuring the compatibility of the usage of above mentioned operators with the defining quasi-particles of charge $>1$. By using this proof, we find a basis of $W_{N(k{\Lambda}_{0})}$. Finally, fermionic-type character formulas for $W_{L(k{\Lambda}_{0})}$ and $W_{N(k{\Lambda}_{0})}$ follow as a direct consequence of obtaining the quasi-particle bases. The main result of this paper is the following identity of Rogers-Ramanujan's type, which follows from the character formula for $W_{N(k{\Lambda}_{0})}$:
\begin{thm1}
\begin{align}\nonumber
\prod_{m > 0}& \frac{1}{(1-q^my_1)}\frac{1}{(1-q^my_2)}\frac{1}{(1-q^my_1y_2)}\frac{1}{(1-q^my_1y_2^2)}\\
\nonumber
= \sum_{\substack{r^{(1)}_{1}\geq \ldots \geq  r^{(u)}_{1} \geq  0\\ u \geq  0}}&\frac{q^{r^{(1)^{2}}_{1}+\cdots  +r^{(u)^{2}}_{1}}}{(q)_{r^{(1)}_{1}-r^{(2)}_{1}}\cdots (q)_{r^{(u)}_{1}}}y^{r_1}_{1}\\
\nonumber
\sum_{\substack{r^{(1)}_{2}\geq \ldots \geq r^{(v)}_{2}\geq 0\\v \geq 
0}}&\frac{q^{r^{(1)^{2}}_{2}+ \cdots +r^{(v)^{2}}_{2}-r_{1}^{(1)}(r_{2}^{(1)}+r_{2}^{(2)})
- \cdots -r_{1}^{(u)}(r_{2}^{(2u-1)}
+r_{2}^{(2u)})- \cdots -r_{1}^{(v)}(r_{2}^{(2v-1)}
+r_{2}^{(2v)})}}{(q)_{r^{(1)}_{2}-r^{(2)}_{2}} \cdots (q)_{r^{(v)}_{2}}}y^{r_2}_{2}. 
\end{align}
\end{thm1}
The paper is organized as follows. In Section 1, we introduce most of our notations
and definitions. In Section 2, we build quasi-particle bases for
the principal subspaces of the standard modules and generalized Verma modules. In Section 3, we prove the theorems \ref{thm:1} and \ref{S67T1}. In the last section, we define characters of the principal subspaces $W_{L(k{\Lambda}_{0})}$ and $W_{N(k{\Lambda}_{0})}$ and use the quasi-particle basis to calculate the character formulas directly.

\section{PRINCIPAL SUBSPACE}\label{s:princ}
\subsection{Modules of affine Lie algebra of type \texorpdfstring{$B_2^{(1)}$}{B2(1)}} \label{ss:modules}
Let $\mathfrak{g}$ be a complex simple Lie algebra of type $B_{2}^{(1)}$, let $\mathfrak{h}$ be a Cartan subalgebra
of $\mathfrak{g}$ and $R$ the corresponding root system. Denote by $\left\langle \cdot , \cdot \right\rangle$ the standard symmetric invariant nondegenerate bilinear form on $\mathfrak{g}$ which enables us to identify $\mathfrak{h}$ with its dual $\mathfrak{h}^{*}$. We normalize this form so that $\left\langle \alpha , \alpha \right\rangle = 2$ for every long root $\alpha \in R$. 
\par Let $\epsilon_1,\epsilon_2$ be an orthonormal basis of the real span of the root system $R$ so that $R$ has the following base: $\Pi =\left\{\alpha_1, \alpha_2\right\}$, $\alpha_1=\epsilon_1-\epsilon_2$, $\alpha_2=\epsilon_2$. The set of positive roots is $R_{+}=\left\{\alpha_{1}, \alpha_{1}+\alpha_{2}, \alpha_{1}+2\alpha_{2},\alpha_{2}\right\}$ and the maximal root is $\theta=\alpha_{1}+2\alpha_{2}$. Then we have the triangular decomposition, $\mathfrak{g}=\mathfrak{n}_{-}\oplus \mathfrak{h}\oplus\mathfrak{n}_{+}$. For $\alpha \in R$, we have the corresponding coroot $\alpha^{\vee}=\frac{2 \alpha}{\left\langle \alpha, \alpha\right\rangle}$. Also, for each root $\alpha \in R$ fix a root vector $x_{\alpha} \in \mathfrak{g}$. We will work with one-dimensional subalgebras of $\mathfrak{g}$ $$\mathfrak{n}_{\alpha}= \mathbb{C}x_{\alpha}.$$ 
\par Let $Q=\sum^{2}_{i=1}\mathbb{Z}\alpha_{i}$ and $P=\sum^{2}_{i=1}\mathbb{Z}\omega_{i}$ be the root and weight lattices, 
 where $\omega_1, \omega_2$ are the fundamental weights of $\mathfrak{g}$: $\left\langle \omega_{i} , \alpha_{j}\sp\vee \right\rangle=\delta_{i,j}$, $i,j=1,2$. For later use, we set $\omega_{0}=0$. 
\par We consider the untwisted affine Lie algebra associated to $\mathfrak{g}$,
\begin{equation*}
\widehat{\mathfrak{g}}= \mathfrak{g}\otimes \mathbb{C}[t,t^{-1}]\oplus \mathbb{C}c,
\end{equation*}
where $c$ is a non-zero central element (cf. \cite{K}). For every $x \in\mathfrak{g}$ and $j \in \mathbb{Z}$, we write $x(j)$ for elements $x\otimes t^{j}$. Commutation relations are then given by
\begin{equation*}
\left[c, x(j)\right]=0,
\end{equation*}
\begin{equation*}
\left[x(j_1),y(j_2)\right]= \left[x, y\right](j_1+j_2) + \left\langle x, y \right\rangle j_1 \delta_{j_1+j_2,0}c,
\end{equation*}
where $x(j) = x \otimes t^j$ for any $x, y \in \mathfrak{g}, \ \ j,j_1,j_2 \in \mathbb{Z}$. Let us introduce the following subalgebras of $\widehat{\mathfrak{g}}$
\begin{align*} 
\widehat{\mathfrak{g}}_{\geq 0}=\bigoplus_{n\geq0} \mathfrak{g}\otimes t^{n}\oplus \mathbb{C}c ,\ \ 
\widehat{\mathfrak{g}}_{< 0}=\bigoplus_{n< 0} \mathfrak{g}\otimes t^{n},
\end{align*}
\begin{align*}
 \mathcal{L}(\mathfrak{n}_{+})=\mathfrak{n}_{+} \otimes \mathbb{C}[t,t^{-1}],
\end{align*}
\begin{align*}    
\mathcal{L}(\mathfrak{n}_{+})_{\geq 0}=\mathcal{L}(\mathfrak{n}_{+}) \otimes \mathbb{C}[t], \ \ \mathcal{L}(\mathfrak{n}_{+})_{< 0}=\mathcal{L}(\mathfrak{n}_{+}) \otimes t^{-1}\mathbb{C}[t]
\end{align*}
and 
\begin{equation*}
\mathcal{L}\left(\mathfrak{n}_{\alpha_{i}}\right) =\mathfrak{n}_{\alpha_{i}}\otimes \mathbb{C}[t,t^{-1}],
\end{equation*}
where $\alpha_i \in \Pi$. By adjoining the degree operator $d$ such that
\begin{equation}\label{eq:S11}
\left[d, x (j)\right]=jx(j),\ \    [d, c] = 0
\end{equation}
to the Lie algebra $\widehat{\mathfrak{g}}$, one obtains the affine Kac-Moody algebra \begin{equation*}\widetilde{\mathfrak{g}}= \widehat{\mathfrak{g}} \oplus \mathbb{C}d,\end{equation*} (cf. \cite{K}).
\par Set $\widetilde{\mathfrak{h}}= \mathfrak{h} \oplus  \mathbb{C}c \oplus \mathbb{C}d$. The form $\left\langle  \cdot, \cdot 
\right\rangle$ on $\mathfrak{h}$ extends naturally to $\widetilde{\mathfrak{h}}$. We shall identify $\widetilde{\mathfrak{h}}$ with its dual space $\widetilde{\mathfrak{h}}^*$ via this form. We define $\delta \in \widetilde{\mathfrak{h}}^{\ast}$ by $\delta(d)=1$, $\delta(c)=0$ and $\delta(h)=0$, for every $h \in \mathfrak{h}$. Set $\alpha_0=\delta-\theta$ and $\alpha_0\sp\vee=c-\theta\sp\vee$. Then   $\left\{\alpha_{0},\alpha_{1},\alpha_{2}\right\}$ is a set of simple roots and   $\left\{\alpha_{0}\sp\vee,\alpha_{1}\sp\vee,\alpha_{2}\sp\vee\right\}$ is a set of simple coroots of $\widetilde{\mathfrak{g}}$.  
\par For every simple root $\alpha \in \Pi$, let $sl_{2}(\alpha)=\text{span} \left\{x_{\alpha}, x_{-\alpha}, \alpha\sp\vee\right\}$ be three-dimen\-sional subalgebra of $\mathfrak{g}$ so that the map
\begin{equation*} h\mapsto \alpha\sp\vee, \ \ e \mapsto x_{\alpha}, \ \ f\mapsto x_{-\alpha}\end{equation*} is isomorphism with Lie algebra  $sl_2(\mathbb{C})=\text{span} \left\{e,f,h\right\}$ with commutation relations:
\begin{equation*}
\left[h,e\right]=2e,\ \left[h,f\right]=-2f,\ \left[e,f\right]=h.
\end{equation*} 
The associated affine Lie algebra $\widetilde{\mathfrak{sl}}_{2}(\alpha)$, in this case, is subalgebra of $\widetilde{\mathfrak{g}}$ with  the central element 
\begin{equation}\label{eq:S12}
c'=\frac{2c}{\left\langle \alpha,\alpha\right\rangle}.
\end{equation}
\par Define fundamental weights of $\widetilde{\mathfrak{g}}$ by $\left\langle \Lambda_{i} , \alpha_{j}\sp\vee \right\rangle=\delta_{i,j}$ for $i,j=0,1,2$ and $\Lambda_i\left(d\right)=0$. Denote by $L(\Lambda_0)$, $L(\Lambda_1)$, $L(\Lambda_2)$ standard $\widetilde{\mathfrak{g}}$-modules of level $1$, that is integrable highest weight $\widetilde{\mathfrak{g}}$-modules of level 1 with highest weights $\Lambda_0$, $\Lambda_1$, $\Lambda_2$ and with highest weight vectors $v_{\Lambda_0}, v_{\Lambda_1}, v_{\Lambda_2}$.  
 Note that $L(\Lambda_{0})$ is a direct sum of standard $\widetilde{\mathfrak{sl}}_{2}(\alpha)$-modules of level $1$ if $\alpha$ is a long root and level $2$ if $\alpha$ is a short root. This follows from (\ref{eq:S12}) (cf. \cite{K}). 
\par For the sake of simplicity, we shall restrict our investigation to the $\widehat{\mathfrak{g}}$-module $N(k\Lambda_{0})$ and its irreducible quotient $L(k\Lambda_{0})$, where level $k$ is a positive integer. Throughout this paper, we will write $x(m)$ for the action of $x\otimes t^m$ on any $\widehat{\mathfrak{g}}$-module, where $x \in \mathfrak{g}$ and $j \in \mathbb{Z}$.
\par The generalized Verma module $N(k\Lambda_{0})$ is defined as the induced $\widehat{\mathfrak{g}}$-module
\begin{equation*} 
N(k\Lambda_{0})= 
U(\widehat{\mathfrak{g}})\otimes_{U(\widehat{\mathfrak{g}}_{\geq 0})} \mathbb{C}v_{N(k\Lambda_{0})},
\end{equation*}
where $\mathbb{C}v_{N(k\Lambda_{0})}$ is 1-dimensional $\widehat{\mathfrak{g}}_{\geq 0}$-module, such that \begin{equation*}cv_{N(k\Lambda_{0})}=kv_{N(k\Lambda_{0})}\end{equation*} and \begin{equation*}(\mathfrak{g}\otimes t^{j})v_{N(k\Lambda_{0})}=0,\end{equation*} for every $j\geq 0$. From the Poincar\'{e}-Birkhoff-Witt theorem, we have  
\begin{equation*} 
N(k\Lambda_{0})\cong  U(\widehat{\mathfrak{g}}_{<0})\otimes_{\mathbb{C}} \mathbb{C}v_{N(k\Lambda_{0})}
\end{equation*} 
as vector spaces. Set \begin{equation*}v_{N(k\Lambda_{0})}=1 \otimes v_{N(k\Lambda_{0})}.\end{equation*}
We view $\widehat{\mathfrak{g}}$-modules $N(k\Lambda_{0})$ and $L(k\Lambda_{0})$ as $\widetilde{\mathfrak{g}}$-modules, where $d$ acts as \begin{equation}\label{eq:S13}
dv_{N(k\Lambda_{0})}=0
\end{equation} (see \cite{LL}).

\subsection{Principal subspace}
Let $k \in \mathbb{N}$ and let $\Lambda = k \Lambda_0$, the only dominant integral weight of level k which we consider. Set $v_{k\Lambda_0}$ to be the highest weight vector of the standard module $L(k\Lambda_0)$. As in \cite{FS} and \cite{G}, we define the principal subspace $W_{L(k\Lambda_0)}$ of the standard module $L(k\Lambda_0)$ as
\begin{equation*}
    W_{L(k\Lambda_0)}= U(\mathcal{L}(\mathfrak{n}_+))v_{k\Lambda_0}.
\end{equation*}
Here $U(\mathcal{L}(\mathfrak{n}_+))$ is the universal enveloping algebra of Lie algebra $\mathcal{L}(\mathfrak{n}_+)$. Continuing to generalize \cite{FS} and \cite{G}, we introduce the principal subspace $W_{N(k\Lambda_{0})}$ of the generalized Verma module $N(k\Lambda_{0})$ as
\begin{equation*}
W_{N(k\Lambda_{0})}= U(\mathcal{L}(\mathfrak{n}_{+}))v_{N(k\Lambda_{0})}.
\end{equation*}
\par Like in \cite{G}, we denote the vector space 
\begin{equation*}
U = U(\mathcal{L}\left(\mathfrak{n}_{\alpha_{2}}\right))U(\mathcal{L}\left(\mathfrak{n}_{\alpha_{1}}\right)).\end{equation*}
\par Using the same argument as Georgiev in \cite{G}, we have
\begin{lem}
\begin{align}\nonumber
W_{L(k\Lambda_{0})}& = Uv_{k\Lambda_{0}},\\
\nonumber
W_{N(k\Lambda_{0})}&=Uv_{N(k\Lambda_{0})}.
\end{align}
\end{lem}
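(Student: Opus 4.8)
The plan is to unwind the definitions and use the root structure of $B_2$. Recall $W_{L(k\Lambda_0)} = U(\mathcal{L}(\mathfrak{n}_+)) v_{k\Lambda_0}$ and $U(\mathcal{L}(\mathfrak{n}_+))$ is generated by the operators $x_\alpha(m)$ for $\alpha \in R_+ = \{\alpha_1, \alpha_2, \alpha_1+\alpha_2, \alpha_1+2\alpha_2\}$ and $m \in \mathbb{Z}$, whereas $U$ is built only from $x_{\alpha_1}(m)$ and $x_{\alpha_2}(m)$. The containment $U v_{k\Lambda_0} \subseteq W_{L(k\Lambda_0)}$ is immediate since $U \subseteq U(\mathcal{L}(\mathfrak{n}_+))$. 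So the real content is the reverse inclusion, for which I would show that the operators coming from the non-simple positive roots can be rewritten, up to scalars and reordering, in terms of the simple-root operators via bracket relations, and then invoke a PBW-type reordering argument.

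Concretely, since $[x_{\alpha_1}, x_{\alpha_2}]$ is a nonzero multiple of $x_{\alpha_1+\alpha_2}$ and $[x_{\alpha_1+\alpha_2}, x_{\alpha_2}]$ is a nonzero multiple of $x_{\alpha_1 + 2\alpha_2}$ (all other brackets among these landing in $\mathfrak{n}_+$ being forced by the root system), the affine commutation relation $[x(j_1), y(j_2)] = [x,y](j_1+j_2) + \langle x, y\rangle j_1 \delta_{j_1+j_2,0} c$ (with the central term vanishing here because $\langle x_{\alpha_1}, x_{\alpha_2}\rangle = 0$ for distinct positive roots) lets me express $x_{\alpha_1+\alpha_2}(m)$ and $x_{\alpha_1+2\alpha_2}(m)$ as brackets of the simple-root operators. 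The key point is the standard fact (used by Georgiev in \cite{G}, and originally Feigin–Stoyanovsky \cite{FS}) that $U(\mathcal{L}(\mathfrak{n}_+)) = U(\mathcal{L}(\mathfrak{n}_{\alpha_2})) \, U(\mathcal{L}(\mathfrak{n}_{\alpha_1 + 2\alpha_2}))\, U(\mathcal{L}(\mathfrak{n}_{\alpha_1+\alpha_2})) \, U(\mathcal{L}(\mathfrak{n}_{\alpha_1}))$ as a vector space (a PBW factorization ordering the root subalgebras), and then that the ``middle'' factors act on $v_{k\Lambda_0}$ in a way already captured by $U$: one repeatedly pushes a generator $x_{\alpha_1+\alpha_2}(m)$ or $x_{\alpha_1+2\alpha_2}(m)$ leftmost, writes it as a commutator of $x_{\alpha_1}$- and $x_{\alpha_2}$-operators, and expands, strictly decreasing (say) the number of non-simple-root generators in each monomial, so induction on that count closes the argument.

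The step I expect to be the main obstacle is making the induction bookkeeping precise: after replacing one non-simple generator by a commutator of two simple ones, the resulting terms have more total generators, so the naive induction on total length fails and one must induct on a more refined statistic (number of non-simple-root factors, with ties broken by total length or by the position of the leftmost non-simple factor). One also has to check carefully that the central term never contributes — here it does not, because $\langle x_\beta, x_\gamma \rangle \ne 0$ only when $\beta + \gamma = 0$, and all our roots are positive — and that applying a generator to $v_{k\Lambda_0}$ versus to an intermediate vector does not matter, since all the rewriting happens at the level of $U(\mathcal{L}(\mathfrak{n}_+))$ before hitting the highest weight vector. Since the statement explicitly says this follows by ``the same argument as Georgiev in \cite{G},'' I would present the $B_2$ root data (the four positive roots, the two relevant brackets) and then cite \cite{G} for the formal reordering lemma rather than rederiving it; the identical argument applies verbatim to $W_{N(k\Lambda_0)}$ with $v_{N(k\Lambda_0)}$ in place of $v_{k\Lambda_0}$.
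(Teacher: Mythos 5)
Your route is the same one the paper takes (the paper gives no details at all, simply invoking Georgiev's argument from \cite{G}), and the ingredients you assemble are the right ones: the trivial inclusion, the fact that $x_{\alpha_1+\alpha_2}$ and $x_{\alpha_1+2\alpha_2}$ are brackets of the simple root vectors, the vanishing of the central terms, and a PBW factorization. But two steps of your sketch would fail as written. First, the termination statistic: induction on the number of non-simple-root factors (with ties broken by total length or by the position of the leftmost such factor) does not close, because eliminating the middle PBW factors is only half the job; you must also restore the two-block order with every $x_{\alpha_2}$-mode to the left of every $x_{\alpha_1}$-mode, and each straightening step $x_{\alpha_1}(m)x_{\alpha_2}(n)=x_{\alpha_2}(n)x_{\alpha_1}(m)-x_{\alpha_1+\alpha_2}(m+n)$ recreates a non-simple factor, so your measure goes back up. In fact the naive rewriting cycles: with $[x_{\alpha_2},x_{\alpha_1}]=x_{\alpha_1+\alpha_2}$ and $x_{\alpha_2}(0)v=0$ one gets $x_{\alpha_1+\alpha_2}(q)x_{\alpha_1}(p)v=x_{\alpha_2}(0)x_{\alpha_1}(p)x_{\alpha_1}(q)v-x_{\alpha_1+\alpha_2}(p)x_{\alpha_1}(q)v$ and the symmetric relation with $p,q$ exchanged, which together only show that the \emph{sum} of the two vectors lies in $Uv$, not each one separately. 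Second, the remark that it does not matter whether a generator hits $v_{k\Lambda_0}$ or an intermediate vector, ``since all the rewriting happens at the level of $U(\mathcal{L}(\mathfrak{n}_+))$,'' cannot be right: by PBW, $U(\mathcal{L}(\mathfrak{n}_+))$ is strictly larger than $U=U(\mathcal{L}(\mathfrak{n}_{\alpha_2}))U(\mathcal{L}(\mathfrak{n}_{\alpha_1}))$ (for instance $x_{\alpha_1+\alpha_2}(-1)\notin U$), so no operator-level identity can eliminate the middle factors; the highest-weight property of $v_{k\Lambda_0}$, respectively $v_{N(k\Lambda_0)}$, must enter somewhere.

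Both problems are repaired by exploiting the freedom in the bracket decomposition, which is in effect how Georgiev's argument runs. Since there is no central term, $x_{\alpha_1+\alpha_2}(m)=[x_{\alpha_2}(j),x_{\alpha_1}(m-j)]$ and $x_{\alpha_1+2\alpha_2}(m)=[x_{\alpha_2}(j),x_{\alpha_1+\alpha_2}(m-j)]$ for \emph{every} $j\in\mathbb{Z}$; given a fixed vector $w$ of $L(k\Lambda_0)$ or $N(k\Lambda_0)$ one may choose $j\gg 0$ with $x_{\alpha_2}(j)w=0$ (the $d$-grading of these modules is bounded above), so that $x_{\gamma}(m)w=x_{\alpha_2}(j)x_{\gamma-\alpha_2}(m-j)w$ for $\gamma$ non-simple; this breaks the cycle above, e.g. $x_{\alpha_1+\alpha_2}(q)x_{\alpha_1}(p)v=x_{\alpha_2}(j)x_{\alpha_1}(q-j)x_{\alpha_1}(p)v\in Uv$ for $j\geq\max(0,-p)$. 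A clean organization is then: show that $Y=Uv$ is stable under $x_{\alpha_1}(m)$ and $x_{\alpha_2}(m)$ for all $m$ (stability under the non-simple modes follows automatically, since a subspace stable under two operators is stable under their commutator and these modes generate $\mathcal{L}(\mathfrak{n}_+)$); prove the stronger statement that $x_{\gamma}(m)u\in Y$ for $\gamma\in\{\alpha_1,\alpha_1+\alpha_2,\alpha_1+2\alpha_2\}$ and every ordered spanning vector $u$, by induction on the number of $x_{\alpha_2}$-modes in $u$ (peeling one off produces either the same $\gamma$ or $\gamma+\alpha_2$ acting on a shorter vector), with the base case handled by the large-$j$ trick and an induction on the height of $\gamma$. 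With that replacement for your induction the argument is correct, and, as you say, it applies verbatim to $W_{N(k\Lambda_0)}$.
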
 
\begin{flushright}
$\square$
\end{flushright}

\subsection{Vertex operator algebra structure for \texorpdfstring{$N(k\Lambda_0)$}{N(kLambda0)} and \texorpdfstring{$L(k\Lambda_0)$}{L(kLambda0)}} 
It is known that, for every positive integer $k$, the generalized Verma module $N(k\Lambda_{0})$ has a structure of vertex operator algebra (see \cite{LL}, \cite{Li1}, \cite{MP}), where $v_{N(k\Lambda_{0})}$ is the vacuum vector and
\begin{equation*}Y(x(-1)v_{N(k\Lambda_{0})}, z)=x(z)\end{equation*}
is vertex operator associated with the vector $x(-1)v_{N(k\Lambda_{0})} \in N(k\Lambda_{0})$. Denote by $I(k\Lambda_{0})$ the sum of all  the proper ideals in the vertex operator algebra $N(k\Lambda_{0})$. The simple vertex algebra $N(k\Lambda_{0})/I(k\Lambda_{0})$ is then the standard $\widehat{\mathfrak{g}}$-module, that is 
\begin{equation*}N(k\Lambda_{0})/I(k\Lambda_{0})\cong L(k\Lambda_{0})\end{equation*}
(cf. \cite{LL}, \cite{MP}). In addition, all the level $k$ standard modules are modules for this vertex operator algebra (cf. \cite{LL}, \cite{MP}).
\par Standard modules of level $k > 1$ can be viewed, by complete reducibility, as submo\-dules of tensor products of standard modules of level $1$. Vertex operators $x(z)$, where $x \in \mathfrak{g}$, act on the tensor product of standard modules of level $1$ as Lie algebra elements (cf. \cite{LL}).
\par For $\alpha \in R$ and $r>0$, we have 
\begin{equation*} 
 Y(\left(x_{\alpha}(-1)\right)^rv_{N(k\Lambda_{0})},z) = x_{\alpha}(z)^r.
 \end{equation*}
 \par We will use the commutator formula: 
\begin{align}\label{al:S14} 
[Y(x_{\alpha}(-1)v_{N(k\Lambda_{0})},z_1), Y(x_{\beta}(-1)^rv_{N(k\Lambda_{0})},z_2)]\\
\nonumber
= \sum_{j \geq 0} \frac{(-1)^j}{j!} \left(\frac{d}{dz_1} 
 \right)^j z^{-1}_2
\delta\left(\frac{z_1}{z_2}\right)Y(x_{\alpha}(j)x_{\beta}(-1)^rv_{N(k\Lambda_{0})},z_2),
\end{align}
where $\alpha, \beta \in R$, (cf. \cite{FHL}). 
 \par Let $k \in \mathbb{N}$. Then we have the following relations on the standard module $L(k\Lambda_{0})$:
\begin{prop}[cf. \cite{LL}, \cite{Li1}, \cite{MP}]
\begin{align}\label{al:S15}
x_{\alpha_1}(z)^{k+1}&=0,\\
\label{al:S16}
x_{\alpha_2}(z)^{2k+1}&=0.
\end{align}
\end{prop}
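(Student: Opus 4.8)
The plan is to derive both vanishing relations from the representation theory of the affine $\mathfrak{sl}_2$ subalgebras $\widetilde{\mathfrak{sl}}_2(\alpha_i)$ sitting inside $\widetilde{\mathfrak{g}}$, together with the level-restriction observation already recorded in Section~\ref{ss:modules}: on $L(k\Lambda_0)$, the subalgebra $\widetilde{\mathfrak{sl}}_2(\alpha_1)$ acts at level $k$ (since $\alpha_1$ is long, $\langle\alpha_1,\alpha_1\rangle=2$, so $c'=c$ acts as $k$) while $\widetilde{\mathfrak{sl}}_2(\alpha_2)$ acts at level $2k$ (since $\alpha_2$ is short, $\langle\alpha_2,\alpha_2\rangle=1$, so $c'=2c$ acts as $2k$). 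The classical fact for $\widetilde{\mathfrak{sl}}_2$ at a positive integer level $\ell$ — proved e.g.\ in \cite{LL}, and underlying the integrability of standard modules — is that $e(z)^{\ell+1}=0$ on every level-$\ell$ standard module, equivalently that $x_\alpha(-1)^{\ell+1}v$ generates a proper ideal and hence is zero in the simple quotient. Applying this with $\ell=k$ for $\alpha=\alpha_1$ gives \eqref{al:S15}, and with $\ell=2k$ for $\alpha=\alpha_2$ gives \eqref{al:S16}.

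Concretely, I would argue as follows. First recall that $N(k\Lambda_0)$ is a vertex operator algebra with $Y(x_\alpha(-1)v_{N(k\Lambda_0)},z)=x_\alpha(z)$, and that $L(k\Lambda_0)=N(k\Lambda_0)/I(k\Lambda_0)$ where $I(k\Lambda_0)$ is the maximal proper ideal; all level-$k$ standard modules, in particular $L(k\Lambda_0)$ itself, are modules for this simple vertex operator algebra. Hence it suffices to show that the singular-type vectors $u_1:=x_{\alpha_1}(-1)^{k+1}v_{N(k\Lambda_0)}$ and $u_2:=x_{\alpha_2}(-1)^{2k+1}v_{N(k\Lambda_0)}$ lie in $I(k\Lambda_0)$, for then $Y(u_i,z)$ acts as zero on $L(k\Lambda_0)$, and by the identity $Y(x_\alpha(-1)^r v_{N(k\Lambda_0)},z)=x_\alpha(z)^r$ recorded just before the Proposition this is exactly \eqref{al:S15}--\eqref{al:S16}. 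To see $u_i\in I(k\Lambda_0)$ one checks each is a \emph{singular vector} for $\widehat{\mathfrak{g}}$: it is annihilated by $\mathfrak{n}_+\otimes\mathbb{C}[t]\oplus\mathfrak{h}\otimes t\mathbb{C}[t]$ and by $\mathfrak{h}\otimes t^0$-eigenvalue considerations, while inside the relevant $\widetilde{\mathfrak{sl}}_2(\alpha_i)$-string the standard $\mathfrak{sl}_2$-computation $f\cdot e^{\ell+1}v=0$ when $h$ acts as $\ell$ shows it is also killed by $x_{-\alpha_i}(1)$; a weight argument (the weight $k\Lambda_0+(k+1)\alpha_1$, resp.\ $k\Lambda_0+(2k+1)\alpha_2$, is not dominant, or more simply lies strictly above $k\Lambda_0$ in the standard ordering) then forces $u_i$ into every maximal proper submodule, i.e.\ into $I(k\Lambda_0)$.

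Alternatively — and this is the route I would actually write out, since it avoids re-deriving singular vectors — one may invoke the realization of $L(k\Lambda_0)$ for $k>1$ as a submodule of a tensor product of level-$1$ standard modules $L(\Lambda_{i_1})\otimes\cdots\otimes L(\Lambda_{i_k})$, on which $x_\alpha(z)$ acts as the corresponding sum of one-variable vertex operators, i.e.\ diagonally as a Lie algebra element. On each level-$1$ factor, $\widetilde{\mathfrak{sl}}_2(\alpha_1)$ acts at level $1$, so $x_{\alpha_1}(z)^2=0$ there; on each level-$1$ factor, $\widetilde{\mathfrak{sl}}_2(\alpha_2)$ acts at level $2$, so $x_{\alpha_2}(z)^3=0$ there. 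Since $x_{\alpha_i}(z)$ on the tensor product is a sum of $k$ mutually commuting operators (the factors are distinct tensor slots and $[x_{\alpha_i},x_{\alpha_i}]=0$), the multinomial expansion of $x_{\alpha_i}(z)^{N}$ involves only terms $x_{\alpha_i}(z)^{a_1}\otimes\cdots\otimes x_{\alpha_i}(z)^{a_k}$ with $\sum a_j=N$; once $N=k+1$ (color $1$) some $a_j\ge 2$, and once $N=2k+1$ (color $2$) some $a_j\ge 3$, so every term vanishes. This proves the relations on every level-$k$ standard module, $L(k\Lambda_0)$ in particular.

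\emph{Main obstacle.} The genuinely nontrivial input is the base case: that on a level-$\ell$ standard $\widetilde{\mathfrak{sl}}_2$-module $e(z)^{\ell+1}=0$. For $\ell=1$ this is immediate from the Frenkel--Kac or lattice realization (or from $\dim$ of weight spaces), and for general $\ell$ it is the statement that the maximal submodule of the level-$\ell$ generalized Verma module contains the singular vector $e(-1)^{\ell+1}v$, which is standard but does require the $\mathfrak{sl}_2$-string computation together with a highest-weight/weight-lattice argument to conclude the vector is singular and hence in the radical. Everything else — the commuting multinomial bookkeeping on tensor products, and the translation between the vertex-operator assertion and the ideal membership — is routine. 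I would therefore structure the write-up so that the $\widetilde{\mathfrak{sl}}_2$ fact is cited from \cite{LL} (or \cite{MP}), and devote the body of the proof to the level-restriction identification $c'\mapsto k$ resp.\ $c'\mapsto 2k$ and the tensor-product reduction.
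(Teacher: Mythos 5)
Your main line of argument is sound and is essentially the argument behind the paper's citation: the paper offers no proof of this Proposition, quoting it from \cite{LL}, \cite{Li1}, \cite{MP}, and what those references supply is exactly what you describe — the identification of the $\widetilde{\mathfrak{sl}}_2(\alpha_i)$-levels via $c'=2c/\langle\alpha_i,\alpha_i\rangle$ (level $k$ for the long root $\alpha_1$, level $2k$ for the short root $\alpha_2$), the relation $e(z)^{\ell+1}=0$ on level-$\ell$ standard $\widehat{\mathfrak{sl}}_2$-modules, and, for $k>1$, the reduction through $L(k\Lambda_0)\subset L(\Lambda_0)^{\otimes k}$ with the multinomial pigeonhole applied to the level-one relations $x_{\alpha_1}(z)^2=0$, $x_{\alpha_2}(z)^3=0$. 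Note only that on each level-one factor the short-root base case is the \emph{level-two} relation $e(z)^3=0$, so the Frenkel--Kac remark in your last paragraph covers the long root but not the short one; citing \cite{LL} for general $\ell$, as you do, is the right way to handle both.

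The singular-vector justification in your second paragraph, however, is wrong as stated. The vectors $u_1=x_{\alpha_1}(-1)^{k+1}v_{N(k\Lambda_0)}$ and $u_2=x_{\alpha_2}(-1)^{2k+1}v_{N(k\Lambda_0)}$ are \emph{not} singular vectors for $\widehat{\mathfrak{g}}$: they are not annihilated by $\mathfrak{n}_+\otimes\mathbb{C}[t]$. The paper's own Lemma \ref{lem:S231}(a) gives
\begin{equation*}
x_{\alpha_2}(0)\,x_{\alpha_1}(-1)^{k+1}v_{N(k\Lambda_0)}=(k+1)\,x_{\alpha_1}(-1)^{k}x_{\alpha_1+\alpha_2}(-1)v_{N(k\Lambda_0)}\neq 0,
\end{equation*}
and Lemma \ref{lem:S233}(a) shows likewise that $x_{\alpha_1}(0)$ does not kill $x_{\alpha_2}(-1)^{2k+1}v_{N(k\Lambda_0)}$; in type $B_2$ only the highest-root vector $x_{\theta}(-1)^{k+1}v_{N(k\Lambda_0)}$, $\theta=\alpha_1+2\alpha_2$, is a $\widehat{\mathfrak{g}}$-singular vector generating $I(k\Lambda_0)$. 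The memberships $u_i\in I(k\Lambda_0)$ are still true, but they must be deduced from the $\widetilde{\mathfrak{sl}}_2(\alpha_i)$-structure of the integrable module $L(k\Lambda_0)$ (i.e. from the facts in your first and third paragraphs), not from $\widehat{\mathfrak{g}}$-singularity plus a weight argument. Since you explicitly designate the tensor-product route as the one you would write out, the proposal as a whole stands; just delete or repair the singular-vector detour.
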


\section{Quasi-particle bases}

\subsection{Quasi-particle monomials}
In the description of our bases, we use quasi-particles as in \cite{G}. In this section, we introduce the concepts of quasi-particles of colors  $i=1,2$ and charges $r\geq 1$, the concepts of monochromatic and polychromatic monomials and the linear order among quasi-particle monomials. We start with the definition of quasi-particles.\\ 
\textbf{Definition of quasi-particles.} For given $i \in \left\{1,2\right\}$, $r \in \mathbb{N}$ and $m\in \mathbb{Z}$ define a quasi-particle 
of color $i$, charge $r$ and energy $-m$ by
\begin{equation}\label{eq:S21}
x_{r\alpha_{i}}(m)=\textup{Res}_z \left\{ z^{m+r-1}\underbrace{x_{\alpha_{i}}(z) \cdots x_{\alpha_{i}}(z)}_{\textup{r factors}}\right\}
\end{equation}
We shall say that vertex operator $x_{r\alpha_{i}}(z)$ represents the generating function of color $i$ and charge $r$. From (\ref{eq:S21}) 
it follows
\begin{equation*} 
x_{r\alpha_{i}}(m)=\sum_{\substack{m_{1},\ldots, m_{r}\in \mathbb{Z} \\ m_{1}+\cdots +m_{r}=m}}x_{\alpha_{i}}(m_{r})\cdots x_{\alpha_{i}}(m_{1}).
\end{equation*}
Note that a family  of operators \begin{equation*}\left(x_{\alpha_{i}}(m_{r})\cdots  x_{\alpha_{i}}(m_{1})\right)_{\substack{m_{1},\ldots ,m_{r}\in
\mathbb{Z} \\ m_{1}+ \cdots + m_{r}=m}}\end{equation*} on the highest weight module is a summable family (cf. \cite{LL}). We will usually denote a product of quasi-particles of color $i$ by $b(\alpha_i)$. We say that monomial $b(\alpha_i)$ is a monochromatic monomial colored with color-type $r_i$, if the sum of all quasi-particle charges in monomial $b(\alpha_i)$ is $r_i$. 
\par Now, choose a partition of positive integer $r_{i}$, which we denote by \begin{equation*}(r^{(1)}_{i}, r^{(2)}_{i}, \ldots , r^{(s)}_{i}),\end{equation*} so that
\begin{equation*} r^{(1)}_{i} \geq  r^{(2)}_{i} \geq \cdots \geq  r^{(s)}_{i} \geq  0 \ \ \text{and} \ \ \ s \geq 1.\end{equation*} 
Denote by 
\begin{equation*}\left(n_{r_{i}^{(1)},i}, \ldots , n_{1,i}\right),\end{equation*}
the conjugate of $(r^{(1)}_{i}, r^{(2)}_{i}, \ldots , r^{(s)}_{i})$ (cf. \cite{A}),
 where
\begin{equation*} 0 \leq n_{r_{i}^{(1)},i} \leq \cdots \leq  n_{1,i}.\end{equation*}
We say that a monochromatic quasi-particle monomial 
\begin{equation*} b(\alpha_i)=x_{n_{r_{i}^{(1)},i}\alpha_{i}}(m_{r_{i}^{(1)},i})\cdots  x_{n_{1,i}\alpha_{i}}(m_{1,i}),\end{equation*}
is of color-type $r_i$, charge-type
\begin{equation*}\left(n_{r_{i}^{(1)},i}, \ldots ,  n_{1,i}\right)\end{equation*}
and dual-charge-type
\begin{equation*} \left(r^{(1)}_{i}, r^{(2)}_{i}, \ldots , r^{(s)}_{i}\right).\end{equation*}
This quasi-particle monomial is built out of $r^{(1)}_{i}-r^{(2)}_{i}$ quasi-particles of charge $1$, $r^{(2)}_{i}-r^{(3)}_{i}$   quasi-particles of charge $2$,$\ldots$, $r^{(s)}_{i}$ quasi-particles of charge $s$. In the demonstration of charge-type and   dual-charge-type of monomial $b(\alpha_i)$ we use the graphic presentation from \cite{G}. 
\par Since quasi-particles of the same color commute, we arrange quasi-particles of the same color and the same charge so 
 that the values $m_{p,i}$, for $1 \leq p \leq  r_i^{(1)}$, form a decreasing sequence of integers from right to left (that is, we start with the highest integer in the given sequence and we finish with the smallest integer).
\par We say that monomials ``colored'' with more colors are polychromatic monomials. For (polychromatic) monomial 
\begin{equation*}b= b(\alpha_{2})b(\alpha_{1}),\end{equation*}
\begin{equation*}=x_{n_{r_{2}^{(1)},2}\alpha_{2}}(m_{r_{2}^{(1)},2}) \cdots  x_{n_{1,2}\alpha_{2}}(m_{1,2}) 
x_{n_{r_{1}^{(1)},1}\alpha_{1}}(m_{r_{1}^{(1)},1}) \cdots  x_{n_{1,1}\alpha_{1}}(m_{1,1}),\end{equation*} 
we will say it is of charge-type 
\begin{equation*}\left(n_{r_{2}^{(1)},2}, \ldots ,n_{1,2};n_{r_{1}^{(1)},1}, \ldots ,n_{1,1}\right),\end{equation*} where
\begin{equation*} 
0 \leq n_{r_{i}^{(1)},i}\leq \ldots \leq  n_{1,i},
\end{equation*}
dual-charge-type
\begin{equation*} \left(r^{(1)}_{2},\ldots , r^{(s_{2})}_{2};r^{(1)}_{1},\ldots , r^{(s_{1})}_{1} \right),\end{equation*}
where
\begin{equation*} 
r^{(1)}_{i}\geq r^{(2)}_{i}\geq \ldots \geq  r^{(s_{i})}_{i}\geq 0 
\end{equation*}
and color-type
\begin{equation*} \left(r_{2},r_{1}\right),\end{equation*}
where 
\begin{equation*} 
r_i=\sum_{p=1}^{r_{i}^{(1)}}n_{p,i}=\sum^{s_{i}}_{t=1}r^{(t)}_{i} \ \ \text{and} \ \ s_{i}\in \mathbb{N},
\end{equation*}
if for every color \begin{equation*}\left(n_{r_{i}^{(1)},i}, \ldots ,n_{1,i}\right)$$ and $$\left(r^{(1)}_{i}, r^{(2)}_{i}, \ldots , 
r^{(s)}_{i}\right)\end{equation*}
are mutually conjugate partitions of $r_i$ (cf. \cite{G}). 
 In this case we can visualize charge-type and dual-charge type of polychromatic monomials $b=b(\alpha_2)b(\alpha_1)$ in 
graphic presentation, as in the Example \ref{Example}.
\par We use the same terminology for the products of generating functions. For the product
\begin{equation*}x_{n_{r_{2}^{(1)},2}\alpha_{2}}(z_{r_{2}^{(1)},2})\cdots  x_{n_{1,2}\alpha_{2}}(z_{1,2}) 
x_{n_{r_{1}^{(1)},1}\alpha_{1}}(z_{r_{1}^{(1)},1})\cdots  x_{n_{1,1}\alpha_{1}}(z_{1,1})\end{equation*}
we say that it is of charge-type \begin{equation*}\left(n_{r_{2}^{(1)},2}, \ldots ,n_{1,2};n_{r_{1}^{(1)},1}, \ldots ,n_{1,1}\right)\end{equation*} and  dual-charge-type
\begin{equation*}\left(r^{(1)}_{2},\ldots , r^{(s_{2})}_{2};r^{(1)}_{1},\ldots , r^{(s_{1})}_{1} 
\right).\end{equation*}
\begin{exm}\label{Example}
For color-type $(r_2,r_1)=(10; 12)$, charge-type $(1,2,3,4;2,3,3,4)$ and dual-charge-type 
$(4,3,2,1; 4,4,3,1)$ we have the following graphic presentation:
\end{exm}
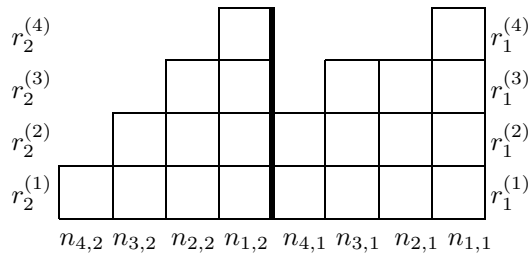
\begin{figure}[h!tb]
\setlength{\unitlength}{7mm}
\begin{center}
\begin{picture}(8,4)
\linethickness{0.1mm}
\multiput(0,0)(1,0){8}
{\line(0,1){1}}
\multiput(0,0)(1,0){8}
{\line(1,0){1}}
\multiput(0,1)(1,0){8}
{\line(1,0){1}}
\multiput(1,2)(1,0){7}
{\line(1,0){1}}
\multiput(1,2)(1,0){3}
{\line(1,0){1}}
\multiput(1,0)(1,0){2}
{\line(0,1){2}}
\multiput(2,0)(1,0){3}
{\line(0,1){3}}
\multiput(2,3)(1,0){2}
{\line(1,0){1}}
\multiput(3,0)(1,0){2}
{\line(0,1){4}}
\multiput(3,4)(1,0){1}
{\line(1,0){1}}
\linethickness{0.75mm}
\multiput(4,0)(1,0){1}
{\line(0,1){4}}
\linethickness{0.1mm}
\multiput(4,1)(1,0){1}
{\line(1,0){4}}
\multiput(5,0)(1,0){1}
{\line(0,1){3}}
\multiput(5,1)(1,0){1}
{\line(1,0){3}}
\multiput(8,0)(1,0){1}
{\line(0,1){4}}
\multiput(6,0)(1,0){2}
{\line(0,1){3}}
\multiput(7,0)(1,0){2}
{\line(0,1){4}}
\multiput(5,3)(1,0){3}
{\line(1,0){1}}
\multiput(7,4)(1,0){1}
{\line(1,0){1}}
\put(-0.9,0.3){\footnotesize{$r_2^{(1)}$}}
\put(-0.9,1.3){\footnotesize{$r_2^{(2)}$}}
\put(-0.9,2.3){\footnotesize{$r_2^{(3)}$}}
\put(-0.9,3.3){\footnotesize{$r_2^{(4)}$}}
\put(8.1,0.3){\footnotesize{$r_1^{(1)}$}}
\put(8.1,1.3){\footnotesize{$r_1^{(2)}$}}
\put(8.1,2.3){\footnotesize{$r_1^{(3)}$}}
\put(8.1,3.3){\footnotesize{$r_1^{(4)}$}}
\put(0.0,-0.5){\footnotesize{$n_{4,2}$}}
\put(1.0,-0.5){\footnotesize{$n_{3,2}$}}
\put(2.1,-0.5){\footnotesize{$n_{2,2}$}}
\put(3.1,-0.5){\footnotesize{$n_{1,2}$}}
\put(4.2,-0.5){\footnotesize{$n_{4,1}$}}
\put(5.2,-0.5){\footnotesize{$n_{3,1}$}}
\put(6.3,-0.5){\footnotesize{$n_{2,1}$}}
\put(7.3,-0.5){\footnotesize{$n_{1,1}$}}
\end{picture}
\bigskip
\caption{Graphic presentation}
\end{center}
\end{figure}
\bigskip
\bigskip
\par We have a ``color-type'' gradation of the entire vector space $U$
\begin{equation}\label{eq:S22}
U=\bigoplus_{r_2,r_1 \geq 0}U_{(r_2,r_1)},
\end{equation}
where $U_{(r_2,r_1)}$ is the weight subspace of weight $r_2 \alpha_2+r_1 \alpha_1$.
\par We compare the (polychromatic) monomials as in \cite{G}; first we compare their charge-types and if the charge-types are the same, we compare their sequences of energies. That is, we apply the following definition (\ref{eq:S23}), starting from color $i=1$:\\ 
for polychromatic monomials, we state 
 \begin{equation}\label{eq:S23}
b< \overline{b}
\end{equation}
if one of the following conditions holds:
\begin{enumerate}
    \item
$\left(n_{r_{2}^{(1)},2}, \ldots  ,n_{1,1}\right)<
\left(\overline{n}_{\overline{r}_{2}^{(1)},2}, \ldots ,\overline{n}_{1,1}\right)$\\ (if there is $u \in \mathbb{N}$, such that   $n_{1,i}=\overline{n}_{1,i}, n_{2,i}=\overline{n}_{2,i},\ldots , n_{u-1,i}=\overline{n}_{u-1,i},$ and 
$u=\overline{r}_{i}^{(1)}+1 \ \ \text{or} \ \ n_{u,i}<\overline{n}_{u,i}$);
\item $\left(n_{r_{2}^{(1)},2}, \ldots , n_{1,1}\right)=
\left(\overline{n}_{\overline{r}_{2}^{(1)},2}, \ldots , \overline{n}_{1,1}\right)$,\\ 
$\left(m_{r_{2}^{(1)},2},\ldots , m_{1,1}\right)
<
\left(\overline{m}_{\overline{r}_{2}^{(1)},2}, \ldots
,\overline{m}_{1,1}\right)$\\
(if the\-re is $u \in \mathbb{N}$, $1\leq u \leq r_i$, such that $m_{1,i}=\overline{m}_{1,i},
m_{2,j}=\overline{m}_{2,j},
\ldots m_{u-1,i}=\overline{m}_{u-1,i}$ and $m_{u,i}<\overline{m}_{u,i}$).
\end{enumerate}

\subsection{Relations among quasi-particles of the same colors}
Relations (\ref{al:S15}) and (\ref{al:S16}) will not be sufficient to determine a basis of $W_{L(k\Lambda_{0})}$. In the next section, we  will determine the remaining relations. Here we establish relations among quasi-particles of the same colors, that is, expressions for the  products of the form $x_{n\alpha}(m)x_{n'\alpha}(m')$, where $\alpha=\alpha_i$, $n,n' \in \mathbb{N}$ and $m,m' \in \mathbb{Z}$. These relations are also used in \cite{F}, \cite{FS} and \cite{G}. In our case, these relations are provided in Lemma \ref{lem:S21} and Corollary \ref{cor:S21}. The proof of these relations may be found in \cite{JP}, so we omit the proof of Lemma \ref{lem:S21} and Corollary \ref{cor:S21}.

\begin{lem}\label{lem:S21}
For fixed $M,j\in \mathbb{Z}$ and $1\leq n\leq n'$ a sequence of $2n$ monomials from the set
\begin{multline*} A=\{x_{n\alpha}(j)x_{n'\alpha}(M-j),x_{n\alpha}(j+1)x_{n'\alpha}(M-j-1),
\ldots ,\\
 \ldots , x_{n\alpha}(j+2n-1)x_{n'\alpha}(M-j-2n+1)\}\end{multline*}
can be expressed as a linear combination of monomials from the set \begin{equation*} \left\{x_{n\alpha}(m)x_{n'\alpha}(m'):m+m'=M\right\} \setminus A\end{equation*} and monomials which have as a factor quasi-particle $x_{(n'+1)\alpha}(j')$, $j' \in \mathbb{Z}$.
\end{lem}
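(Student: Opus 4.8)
The plan is to prove Lemma~\ref{lem:S21} by exploiting the vertex-algebra relations for products of vertex operators of the \emph{same} color, together with the commutator formula (\ref{al:S14}). The key observation is that the generating function $x_{n\alpha}(z_1)x_{n'\alpha}(z_2)$ and the ``merged'' generating function $x_{(n+n')\alpha}(z)$ both arise from the single vertex operator $x_\alpha(z)^{n+n'}$, and the passage between them is governed by the locality/associativity relations in the vertex operator algebra $N(k\Lambda_0)$. Concretely, from the commutator formula one obtains, for $\alpha = \alpha_i$ and roots $\beta = \alpha$, an identity of the form
\begin{equation*}
(z_1-z_2)^{N} [x_{n\alpha}(z_1), x_{n'\alpha}(z_2)] = 0
\end{equation*}
for suitable $N$ depending on $n, n'$ (here $N = 2n$ when $n \le n'$, reflecting the $2n$ terms in the set $A$ and the fact that $\langle \alpha, \alpha\rangle \in \{1,2\}$ forces the relevant pole order), plus lower-charge correction terms coming from the nontrivial brackets $x_\alpha(j)x_\beta(-1)^r v$ that produce a factor $x_{(n'+1)\alpha}$. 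Multiplying out $(z_1-z_2)^{2n}$ and extracting the coefficient of $z_1^{-j-1}z_2^{-(M-j)-1}$ yields exactly a linear dependence among the $2n$ consecutive monomials in $A$ modulo $\{x_{n\alpha}(m)x_{n'\alpha}(m') : m+m'=M\}\setminus A$ and monomials divisible by some $x_{(n'+1)\alpha}(j')$.

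First I would set up the generating-function identity precisely: write $x_{n\alpha}(z_1) = \mathrm{Res}_{w_1}\cdots$ as an iterated residue of $x_\alpha(w_1)\cdots x_\alpha(w_n)$ with the appropriate powers of $w_t$, and similarly for $x_{n'\alpha}(z_2)$, then use the fact that $Y(u,z_1)Y(v,z_2)$ for $u = x_\alpha(-1)^n v_{N(k\Lambda_0)}$, $v = x_\alpha(-1)^{n'}v_{N(k\Lambda_0)}$ satisfies weak commutativity: there is $N$ with $(z_1-z_2)^N Y(u,z_1)Y(v,z_2) = (z_1-z_2)^N Y(v,z_2)Y(u,z_1)$. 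The point is to determine the minimal such $N$; since $x_\alpha(z)^n$ and $x_\alpha(z)^{n'}$ have singular OPE of order governed by $n \cdot n' \cdot \langle\alpha,\alpha\rangle$ in general, but the \emph{symmetrized} combination relevant to extracting a two-term-product relation behaves like order $2n$ for $n\le n'$. I would then invoke the Jacobi identity (or its consequence, the iterate formula) to rewrite $(z_1-z_2)^{2n}x_{n\alpha}(z_1)x_{n'\alpha}(z_2)$ in a form manifestly symmetric in $z_1 \leftrightarrow z_2$ up to terms that, upon using (\ref{al:S14}), involve $x_\alpha(j)x_\alpha(-1)^{n'}v_{N(k\Lambda_0)}$ with $j \ge 0$; such vectors, being in the weight space $(n'+1)\alpha - (j+1)\delta$-type, generate precisely the generating functions $x_{(n'+1)\alpha}(z)$ (with lower-order corrections one handles by descending induction on charge).

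The heart of the argument, and the step I expect to be the main obstacle, is the combinatorial bookkeeping that $2n$ is exactly the right number: one must show that expanding $(z_1-z_2)^{2n}$ and comparing the two orderings $x_{n\alpha}(z_1)x_{n'\alpha}(z_2)$ versus $x_{n'\alpha}(z_2)x_{n\alpha}(z_1)$ produces a relation in which the $2n$ monomials $x_{n\alpha}(j+\ell)x_{n'\alpha}(M-j-\ell)$, $0 \le \ell \le 2n-1$, appear with an invertible coefficient matrix modulo the allowed ``non-$A$'' monomials and the $x_{(n'+1)\alpha}$-divisible monomials. This is essentially a Vandermonde-type nonsingularity claim, but one has to be careful that the correction terms from the commutator formula (\ref{al:S14}) do not fall back into the span of $A$ — this is where one uses that those terms are genuinely of higher charge $n'+1$ in the $\alpha$-direction. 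Since the paper explicitly says the proof is in \cite{JP}, I would keep this part brief: state the weak-commutativity identity with the sharp exponent $2n$, note that extracting the relevant Fourier coefficient gives the claimed linear combination, and cite \cite{JP} for the detailed verification of the coefficient nonsingularity, exactly as the authors do.

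Once Lemma~\ref{lem:S21} is in place in this form, the subsequent Corollary~\ref{cor:S21} should follow by iterating it: repeatedly applying the relation lets one push any ``forbidden'' energy configuration either into a strictly smaller (in the order (\ref{eq:S23})) configuration of the same charge-type or into a monomial with a higher-charge factor $x_{(n'+1)\alpha}$, and a standard descent argument on the monomial order terminates the process. This is the template Georgiev uses in \cite{G} for type $A$, and the only genuinely new input for type $B_2^{(1)}$ is keeping track of the two different root lengths via $\langle\alpha_1,\alpha_1\rangle = 2$, $\langle\alpha_2,\alpha_2\rangle = 1$, which affects the pole orders and hence which relations (\ref{al:S15})–(\ref{al:S16}) truncate the charges — but since Lemma~\ref{lem:S21} is stated uniformly for $\alpha = \alpha_i$ with the bound $2n$, that length dependence is absorbed and does not complicate this particular lemma.
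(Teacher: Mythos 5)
You should first note that the paper itself does not prove Lemma \ref{lem:S21}: it cites \cite{JP}, so the real comparison is with the argument there (which goes back to Feigin--Stoyanovsky and Georgiev). Your sketch, however, rests on a mechanism that cannot work. You propose to obtain the relation from weak commutativity of $x_{n\alpha}(z_1)$ and $x_{n'\alpha}(z_2)$ with ``sharp exponent $2n$'', comparing the two orderings of the product, with correction terms coming from $x_{\alpha}(j)x_{\alpha}(-1)^{n'}v_{N(k\Lambda_0)}$, $j\geq 0$. But for a single root vector one has $[x_{\alpha},x_{\alpha}]=0$ and $\langle x_{\alpha},x_{\alpha}\rangle=0$ (root spaces are isotropic for the invariant form), hence $[x_{\alpha}(m),x_{\alpha}(j)]=0$ for all $m,j$, so $x_{\alpha}(j)x_{\alpha}(-1)^{n'}v_{N(k\Lambda_0)}=0$ for $j\geq0$ and the commutator formula (\ref{al:S14}) with $\beta=\alpha$ gives $[x_{n\alpha}(z_1),x_{n'\alpha}(z_2)]=0$ identically. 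The sharp locality exponent is $0$, not $2n$; comparing the two orderings yields only $0=0$, and there are no ``lower-charge correction terms'' available to produce the $x_{(n'+1)\alpha}$ factors. Likewise the claimed OPE singularity of order $n\,n'\,\langle\alpha,\alpha\rangle$ does not exist for same-color fields, and root lengths are irrelevant to this lemma (they enter only in the different-color relations).

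The actual source of the relations is the behavior of the product on the diagonal $z_1=z_2$, not the exchange of factors. Writing $x_{n\alpha}(z_1)x_{n'\alpha}(z_2)=x_{\alpha}(z_1)^{n}x_{\alpha}(z_2)^{n'}$ and Taylor-expanding the $n$ commuting factors $x_{\alpha}(z_1)$ about $z_2$, the coefficient of $(z_1-z_2)^{t}$ with $0\leq t\leq 2n-1$ cannot have all $n$ derivative orders $\geq 2$, so it always contains a factor $x_{\alpha}(z_2)$ or $x_{\alpha}'(z_2)$ that merges with $x_{\alpha}(z_2)^{n'}$ into $x_{(n'+1)\alpha}(z_2)$ or $\tfrac{1}{n'+1}\partial_{z_2}x_{(n'+1)\alpha}(z_2)$; this is exactly where the number $2n=2\min\{n,n'\}$ comes from, as one already sees for $n=n'=1$, where the order-$2$ coefficient $\tfrac12 x_{\alpha}''x_{\alpha}$ is no longer reducible. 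Hence, modulo monomials containing a quasi-particle of charge at least $n'+1$, the product equals $(z_1-z_2)^{2n}$ times a power series in $(z_1-z_2)$, and extracting, at fixed total energy $M$, the coefficients of $(z_1-z_2)^{t}$ for $t=0,\dots,2n-1$ gives $2n$ linear relations whose submatrix in the $2n$ consecutive unknowns $x_{n\alpha}(j+\ell)x_{n'\alpha}(M-j-\ell)$, $0\leq\ell\leq 2n-1$, is a nonsingular binomial (Vandermonde-type) matrix. So your instinct that a nonsingularity statement is the crux is sound, but it must be fed by this diagonal expansion --- the argument carried out in \cite{JP} --- rather than by a locality or commutator identity, which in this situation is vacuous; as written, your proof would produce no relations at all.
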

\begin{flushright}
\hfill
\end{flushright}
The special case of Lemma \ref{lem:S21} is distinguished in the next corollary. 
\begin{cor}\label{cor:S21}
Fix $n\in \mathbb{N}$ and $j \in \mathbb{Z}$. The elements from the set \begin{equation*}A_1=\{x_{n\alpha}(m)x_{n\alpha}(m'):m'-2n< m \leq   m'\}\end{equation*}
can be expressed as a linear combination of monomials $x_{n\alpha}(m)x_{n\alpha}(m')$, such that $$m\leq m'-2n$$ 
and monomials with quasi-particle $x_{(n+1)\alpha_i}(j')$, $j' \in \mathbb{Z}$.
\end{cor}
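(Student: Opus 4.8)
The plan is to obtain Corollary \ref{cor:S21} directly from Lemma \ref{lem:S21} by taking $n'=n$ there and choosing the ``window'' of that lemma appropriately. Since a product $x_{n\alpha}(m)x_{n\alpha}(m')$ is homogeneous of weight $2n\alpha$ and of $d$-degree $m+m'$ (by \eqref{eq:S11}), it suffices to fix $M\in\mathbb Z$ and to treat the monomials with $m+m'=M$ separately. Because quasi-particles of the same color commute we have $x_{n\alpha}(m)x_{n\alpha}(m')=x_{n\alpha}(m')x_{n\alpha}(m)$, so I may normalize each such monomial so that $m\le m'$, i.e.\ $m\le p:=\lfloor M/2\rfloor$. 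In this normalization the members of $A_1$ of $d$-degree $M$ are exactly the $x_{n\alpha}(m)x_{n\alpha}(M-m)$ with $p-n+1\le m\le p$ (a block of $n$ consecutive top values), while the target monomials, namely those with $m\le m'-2n$, are exactly the ones with $m\le p-n$ (all remaining values), and these two ranges partition $\{m\le p\}$.

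Next I would apply Lemma \ref{lem:S21} with $n'=n$, the same $M$, and $j=p-n+1$. Then its set $A$ has first-factor energies $p-n+1,\dots,p+n$; after normalizing by commutativity, $A$ becomes $\{x_{n\alpha}(m)x_{n\alpha}(M-m)\}$ with $m$ ranging over $\{p-n+1,\dots,p\}\cup\{M-p-n,\dots,M-p-1\}$. Since $M-p$ equals $p$ or $p+1$ according to the parity of $M$, this index set equals $\{p-n+1,\dots,p\}$ when $M$ is odd and $\{p-n,\dots,p\}$ when $M$ is even. In both cases $A$ contains the entire $d$-degree-$M$ part of $A_1$, and the only possible surplus element, appearing when $M$ is even, is $x_{n\alpha}(p-n)x_{n\alpha}(p+n)$, which already belongs to the target set since $p-n=(p+n)-2n$. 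Meanwhile the complement $\{x_{n\alpha}(m)x_{n\alpha}(m'):m+m'=M\}\setminus A$, once normalized, consists only of monomials with $m\le p-n$, hence also lies in the target set. Therefore Lemma \ref{lem:S21} expresses each monomial of $A$ — in particular each $d$-degree-$M$ monomial of $A_1$ — as a linear combination of monomials $x_{n\alpha}(m)x_{n\alpha}(m')$ with $m+m'=M$ and $m\le m'-2n$ together with monomials having a factor $x_{(n+1)\alpha}(j')$, $j'\in\mathbb Z$. Letting $M$ run over $\mathbb Z$ gives the statement of the corollary, applied with $\alpha=\alpha_i$.

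The one genuinely delicate point is this parity bookkeeping: the length-$2n$ window of Lemma \ref{lem:S21} has to be positioned so that, after commutativity is used to normalize, it covers the block $A_1$ exactly (odd $M$) or overshoots it by a single monomial that is harmlessly already in the target set (even $M$). Once that alignment is arranged, the remaining steps are immediate, and I do not expect to need any input beyond Lemma \ref{lem:S21} together with the commutativity of same-color quasi-particles.
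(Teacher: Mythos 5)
Your derivation is correct, and it follows essentially the route the paper intends: the paper presents Corollary \ref{cor:S21} simply as a special case of Lemma \ref{lem:S21} (with the proof omitted and referred to \cite{JP}), which is exactly what you carry out, with $n'=n$ and the window $j=p-n+1$. Your parity bookkeeping for even versus odd $M$ checks out, so the proposal just supplies the details the paper leaves implicit.
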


\subsection{Relations among quasi-particles of different colors}
Here we establish relations among quasi-particles of different colors, that is expressions for the products of the form  $x_{n_i\alpha_i}(m_i)x_{n_j'\alpha_j}(m_j)$, where $i,j =1,2$, $i\neq j$ $n_i,n_j' \in \mathbb{N}$ and $m_i,m'_j \in \mathbb{Z}$.
\begin{lem}\label{lem:S231}
For fixed $n_1\in \mathbb{N}$, we have:
  \end{lem}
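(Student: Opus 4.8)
The plan is to derive, by expanding the relevant vertex operator products and extracting residue coefficients, the relations expressing products $x_{n_1\alpha_1}(m_1)x_{n_2'\alpha_2}(m_2)$ with "too-close" energies in terms of admissible monomials together with monomials containing a higher-charge quasi-particle of color $\alpha_1+\alpha_2$ (or, after iteration, of color $\alpha_1+2\alpha_2=\theta$). The structural input is the commutator formula \eqref{al:S14} applied to $Y(x_{\alpha_1}(-1)v_{N(k\Lambda_0)},z_1)$ and $Y(x_{\alpha_2}(-1)^{n_2'}v_{N(k\Lambda_0)},z_2)$: because $[x_{\alpha_1},x_{\alpha_2}]$ is (up to scalar) $x_{\alpha_1+\alpha_2}$, only finitely many terms $j=0,1$ survive in $B_2$, so the "correction" produced when one commutes an $x_{\alpha_1}$-factor past the block of $x_{\alpha_2}$-factors is a sum of operators built from $x_{\alpha_2}(z_2)^{n_2'-1}$ times a generating function of color $\alpha_1+\alpha_2$, i.e. a quasi-particle of strictly larger total charge relative to color $2$. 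First I would set up the product $x_{n_1\alpha_1}(z_1)x_{n_2'\alpha_2}(z_2)$ as a residue in $z_1$ and $z_2$, write the difference $x_{n_1\alpha_1}(z_1)x_{n_2'\alpha_2}(z_2)-x_{n_2'\alpha_2}(z_2)x_{n_1\alpha_1}(z_1)$ via \eqref{al:S14} summed over the $n_1$ factors of color $1$, and read off that the obstruction to reordering is supported on finitely many powers of $z_1/z_2$, exactly as in Georgiev's $ADE$ argument but with the extra multiplicity coming from the short root.

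The key steps, in order, would be: (i) fix the normalization $[x_{\alpha_1},x_{\alpha_2}]=c_{12}\,x_{\alpha_1+\alpha_2}$, $[x_{\alpha_1+\alpha_2},x_{\alpha_2}]=c'\,x_{\alpha_1+2\alpha_2}$ for nonzero constants, and note $[x_{\alpha_1},x_{\theta}]=0=[x_{\alpha_2},x_{\theta}]$; (ii) compute $x_{\alpha_1}(j)x_{\alpha_2}(-1)^{n_2'}v_{N(k\Lambda_0)}$ for $j\ge 0$ — it vanishes for $j\ge 2$ and for $j=0,1$ produces vectors of the form $x_{\alpha_1+\alpha_2}(\ast)x_{\alpha_2}(-1)^{n_2'-1}v_{N(k\Lambda_0)}$ plus, for $n_2'\ge 2$, a term with $x_{\alpha_1+2\alpha_2}(\ast)x_{\alpha_2}(-1)^{n_2'-2}v_{N(k\Lambda_0)}$; (iii) substitute into \eqref{al:S14} to get an identity of the shape $x_{n_1\alpha_1}(z_1)x_{n_2'\alpha_2}(z_2) = x_{n_2'\alpha_2}(z_2)x_{n_1\alpha_1}(z_1) + (\text{lower-in-}z_1\text{-degree correction involving generating functions of colors }\alpha_1+\alpha_2\text{ and }\theta)$; (iv) take $\mathrm{Res}_{z_1}\mathrm{Res}_{z_2}$ with suitable monomials $z_1^{m_1+n_1-1}z_2^{m_2+n_2'-1}$ and run the same linear-algebra elimination as in Lemma \ref{lem:S21}/Corollary \ref{cor:S21}: a finite window of $2\min(n_1,n_2')$ (or the appropriate count dictated by the bilinear form values $\langle\alpha_1,\alpha_2\rangle$ and $\langle\theta,\alpha_2\rangle$) products with consecutive energies is expressible through products outside that window plus monomials with a quasi-particle of higher charge of color $\alpha_1+\alpha_2$ or $\theta$; (v) finally observe that $x_{\alpha_1+\alpha_2}$ and $x_{\alpha_1+2\alpha_2}$ can themselves be rewritten as iterated brackets of $x_{\alpha_1},x_{\alpha_2}$, so on $W_{L(k\Lambda_0)}$ (equivalently on $U v$) these "new-color" monomials are again in $U=U(\mathcal L(\mathfrak n_{\alpha_2}))U(\mathcal L(\mathfrak n_{\alpha_1}))$, which is what makes the relation usable for reducing the spanning set.

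I expect the main obstacle to be step (ii)–(iii): keeping exact track of the scalar coefficients and, more importantly, the energy shifts in the terms $x_{\alpha_1+\alpha_2}(\ast)x_{\alpha_2}(-1)^{n_2'-1}v$ and $x_{\theta}(\ast)x_{\alpha_2}(-1)^{n_2'-2}v$ as they re-enter as quasi-particles, because the precise exponents of $z_1,z_2$ in those correction terms determine the exact size of the "forbidden window" of energies and hence the difference conditions in the eventual basis. The short root $\alpha_2$ makes this genuinely more delicate than the simply-laced case treated by Georgiev: one commutation past the $x_{\alpha_2}$-block can lower the charge of color $2$ by $2$ (producing color $\theta$), so the relations among colors $1$ and $2$ are really a coupled family, and sorting out which of them are independent — and getting the bookkeeping of charges $(n_1,n_2')$ versus the resulting charges of colors $\alpha_1+\alpha_2$ and $\theta$ right — is the crux. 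Everything else is a direct transcription of the residue/elimination technique already invoked for Lemma \ref{lem:S21}, with the vanishing relations \eqref{al:S15}–\eqref{al:S16} controlling the maximal charges that can occur.
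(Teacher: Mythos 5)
There is a genuine gap: your proposal is aimed at a different statement than Lemma \ref{lem:S231}. That lemma is a short, explicit Lie-algebra computation on the vacuum vector: (a) $x_{\alpha_2}(0)x_{\alpha_1}(-1)^{n_1}v_{N(k\Lambda_0)}=n_1\,x_{\alpha_1}(-1)^{n_1-1}x_{\alpha_1+\alpha_2}(-1)v_{N(k\Lambda_0)}$ and (b) $x_{\alpha_2}(j)x_{\alpha_1}(-1)^{n_1}v_{N(k\Lambda_0)}=0$ for $j>0$. The paper proves it by induction on $n_1$, commuting $x_{\alpha_2}(j)$ through the block $x_{\alpha_1}(-1)^{n_1}$: each commutation produces $x_{\alpha_1+\alpha_2}(j-1)$ (there is no central term because $\langle x_{\alpha_2},x_{\alpha_1}\rangle=0$), and $x_{\alpha_1+\alpha_2}$ commutes with $x_{\alpha_1}$ since $2\alpha_1+\alpha_2$ is not a root of $B_2$; the annihilation properties $x_{\alpha_2}(j)v_{N(k\Lambda_0)}=x_{\alpha_1+\alpha_2}(j)v_{N(k\Lambda_0)}=0$ for $j\geq 0$ then give (a) and (b). No vertex-operator, residue, or elimination machinery enters at all: your plan involving $\mathrm{Res}_{z_1}\mathrm{Res}_{z_2}$ extraction, forbidden energy windows, and reduction modulo higher-charge quasi-particles belongs to Lemma \ref{lem:S21} and to the later lemmas of this subsection (the lemma under discussion is an ingredient feeding, via the commutator formula (\ref{al:S14}), into Lemma \ref{lem:S232}), and nowhere in your proposal are the identities (a) and (b), with their exact coefficient $n_1$ and energy $-1$, actually established.

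Moreover, the one place where you sketch a concrete computation, your step (ii), treats $x_{\alpha_1}(j)x_{\alpha_2}(-1)^{n_2'}v_{N(k\Lambda_0)}$, i.e.\ the colors are interchanged: that is Lemma \ref{lem:S233}, not Lemma \ref{lem:S231}. The two cases are not symmetric. When $x_{\alpha_1}$ is commuted past the $\alpha_2$-block, the bracket $x_{\alpha_1+\alpha_2}$ does not commute with $x_{\alpha_2}$ (it produces $x_{\alpha_1+2\alpha_2}$), which is why Lemma \ref{lem:S233} has nonzero contributions at $j=0$ and $j=1$ together with a $\theta$-colored correction; in Lemma \ref{lem:S231}, by contrast, the only surviving term is at $j=0$ and involves only $x_{\alpha_1+\alpha_2}$. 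Transposing your description (corrections at $j=0,1$ plus a $\theta$-term) to the present lemma would contradict part (b), since $x_{\alpha_2}(1)x_{\alpha_1}(-1)^{n_1}v_{N(k\Lambda_0)}=0$. So as written the proposal neither proves the stated identities nor keeps the two mixed-color computations apart.
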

\begin{itemize}
	\item [a)]
$x_{\alpha_{2}}(0)x_{\alpha_{1}}(-1)^{n_1}v_{N(k\Lambda_{0})}=n_1x_{\alpha_{1}}(-1)^{n_1-1}x_{\alpha_{1}+\alpha_{2}}(-1)v_{N(k\Lambda_{0})}$;
\item[b)] For $j>0$, we have $x_{\alpha_{2}}(j)x_{\alpha_{1}}(-1)^{n_1}v_{N(k\Lambda_{0})}=0$.
\end{itemize}
\begin{proof}
In the proof of the statments a) and b), we use commutation relations 
\begin{align}\label{al:S24}
[x_{\alpha_{2}}, x_{\alpha_{1}}]&=x_{\alpha_{1} + \alpha_{2}},\\
\label{al:S25}
[x_{\alpha_{2}}, x_{\alpha_{1} +\alpha_{2}}]&=x_{\alpha_{1} + 2\alpha_{2}}
\end{align} 
and induction on $n_1\in \mathbb{N}$.
\end{proof}
We use Lemma \ref{lem:S231} and the commutator formula for vertex operators \ref{al:S14} in the proof of the following lemma:
\begin{lem}\label{lem:S232}
Let $n_1,n_2 \in \mathbb{N}$ be fixed. One has
\begin{equation}\label{eq:S26}
(z_{2}-z_{1})^{n_2}x_{n_1\alpha_{1}}(z_{1})x_{n_2\alpha_{2}}(z_{2})=(z_{2}-z_{1})^{n_2}x_{n_2\alpha_{2}}(z_{2})x_{n_1\alpha_{1}}(z_{1}).
\end{equation}
\end{lem}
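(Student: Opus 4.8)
The plan is to prove the commutation relation \eqref{eq:S26} by the standard ``delta-function / formal calculus'' argument: one shows that the commutator $[x_{n_1\alpha_1}(z_1),x_{n_2\alpha_2}(z_2)]$, when expanded via the commutator formula \eqref{al:S14} (applied iteratively, since $x_{n_i\alpha_i}(z)=x_{\alpha_i}(z)^{n_i}$ is a product of generating functions), is a finite sum of terms each carrying a factor $z_2^{-1}(\partial_{z_1})^j\delta(z_1/z_2)$ with $0\le j\le n_2-1$, and that multiplication by $(z_2-z_1)^{n_2}$ annihilates every such term because $(z_2-z_1)^{j+1}z_2^{-1}(\partial_{z_1})^j\delta(z_1/z_2)=0$ for $j\ge 0$. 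So the heart of the matter is the bound on the order of the pole, i.e. showing that the singular part of the product $x_{\alpha_1}(z_1)^{n_1}x_{\alpha_2}(z_2)^{n_2}$ along $z_1=z_2$ has order at most $n_2$.

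First I would reduce to understanding the mutual locality of the \emph{single} generating functions. By \eqref{al:S14} with $\alpha=\alpha_1$, $\beta=\alpha_2$, $r=1$ (or directly from the affine commutation relations together with Lemma~\ref{lem:S231}), the only nonzero brackets of $x_{\alpha_1}(z_1)$ with $x_{\alpha_2}(z_2)$ produce $x_{[\alpha_1,\alpha_2]}=x_{\alpha_1+\alpha_2}$ and, upon a further bracket with $x_{\alpha_2}$, produce $x_{\alpha_1+2\alpha_2}=x_\theta$; a third bracket with $x_{\alpha_2}$ vanishes because $[\alpha_2,\theta]$ is not a root (and $x_{\alpha_2}(z)^{2}$ cannot raise the $\epsilon_2$-weight further within the relevant components). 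Concretely, this gives that $x_{\alpha_1}(z_1)$ and $x_{\alpha_2}(z_2)$ are mutually local of order $1$, while $x_{\alpha_1+\alpha_2}(z_1)$ and $x_{\alpha_2}(z_2)$ are local of order $1$ as well, etc. The clean way to package this is to invoke \eqref{al:S14} directly on $N(k\Lambda_0)$: it shows $(z_1-z_2)^{N}[x_{\alpha_1}(z_1),x_{\alpha_2}(z_2)^{n_2}]=0$ for $N$ large, and one tracks that in fact $N=n_2$ suffices, because each of the $n_2$ factors $x_{\alpha_2}(z_2)$ in $x_{\alpha_2}(-1)^{n_2}v_{N(k\Lambda_0)}$ contributes at most one unit to the order of the pole — this is exactly the content of the sum over $j$ in \eqref{al:S14}, where the vector $x_{\alpha_1}(j)x_{\alpha_2}(-1)^{n_2}v_{N(k\Lambda_0)}$ is nonzero only for $0\le j\le n_2-1$ by the $sl_2$-representation theory inside the short-root copy $\widetilde{\mathfrak{sl}}_2(\alpha_2)$ together with $[\alpha_1,\alpha_2^\vee\text{-string}]$ considerations.

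Having the order-$n_2$ locality of $x_{\alpha_1}(z_1)$ against $x_{\alpha_2}(z_2)^{n_2}$, I would then raise $x_{\alpha_1}(z_1)$ to the $n_1$-th power: since $x_{\alpha_1}(z_1)^{n_1}$ is literally the product of $n_1$ copies of $x_{\alpha_1}(z_1)$ at the \emph{same} formal variable, moving $x_{\alpha_2}(z_2)^{n_2}$ past all of them still only requires the single factor $(z_2-z_1)^{n_2}$ — locality order does not accumulate when the shifted variable is held fixed. This is a routine induction on $n_1$ using $(z_2-z_1)^{n_2}x_{\alpha_1}(z_1)=x_{\alpha_1}(z_1)(z_2-z_1)^{n_2}$ as formal series in $z_1$. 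Multiplying the established identity $(z_2-z_1)^{n_2}[x_{\alpha_1}(z_1),x_{\alpha_2}(z_2)^{n_2}]=0$ on the left and right by appropriate powers of $x_{\alpha_1}(z_1)$ and reassembling gives \eqref{eq:S26} with $x_{n_i\alpha_i}(z_i)=x_{\alpha_i}(z_i)^{n_i}$.

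The main obstacle is pinning down that the pole order is exactly $n_2$ and not larger: a priori \eqref{al:S14} only gives a finite bound, and one must argue that the terms with $j\ge n_2$ vanish. I expect to handle this by the weight/root-string argument sketched above — $x_{\alpha_1}(j)$ acting on $x_{\alpha_2}(-1)^{n_2}v_{N(k\Lambda_0)}$ lands in a weight space that is zero for $j\ge n_2$ because one cannot extract more than $n_2$ successive $\alpha_2$-lowerings' worth of shift, combined with the fact that $\alpha_1+j\alpha_2$ is a root only for $j\in\{0,1,2\}$ and that beyond that the commutator formula forces $x_{\alpha_1}(j)x_{\alpha_2}(-1)^{n_2}v_{N(k\Lambda_0)}$ to be a multiple of $x_{\theta}(\cdots)x_{\alpha_2}(-1)^{n_2-2}v_{N(k\Lambda_0)}$ which still obeys the same degree bound. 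Once this vanishing is in hand, everything else is formal manipulation of delta functions of the type $(z_2-z_1)^{a}z_2^{-1}(\partial_{z_1})^{b}\delta(z_1/z_2)=0$ for $a>b$, which I would simply cite from \cite{LL}.
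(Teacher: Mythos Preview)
Your argument is correct, but it runs in the opposite direction from the paper's. The paper applies the commutator formula~\eqref{al:S14} with the roles of the two colors reversed: it commutes a \emph{single} $x_{\alpha_2}(z_2)$ past the whole block $x_{n_1\alpha_1}(z_1)$, invoking Lemma~\ref{lem:S231}(b) (which says $x_{\alpha_2}(j)x_{\alpha_1}(-1)^{n_1}v_{N(k\Lambda_0)}=0$ for all $j>0$) to conclude the pole is simple, so one factor $(z_2-z_1)$ suffices for that single commutation. It then introduces $n_2$ \emph{distinct} variables $z_{1,2},\dots,z_{n_2,2}$, obtains the product $\prod_{j=1}^{n_2}(z_{j,2}-z_1)$, and finally lets each $z_{j,2}\to z_2$ to get $(z_2-z_1)^{n_2}$. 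You instead commute a single $x_{\alpha_1}(z_1)$ past $x_{n_2\alpha_2}(z_2)$ and bound the pole order by $n_2$ via the degree argument (in fact the order is at most $2$ for every $n_2\ge 2$, as recorded in Lemma~\ref{lem:S233}, so your bound is not sharp---but it is exactly what the statement needs), and then observe correctly that raising $x_{\alpha_1}(z_1)$ to the $n_1$-th power at the \emph{same} variable does not increase the locality order. Both routes are valid; the paper's is marginally cleaner here because Lemma~\ref{lem:S231} has just been proved and gives the exact pole order $1$, whereas your route needs either the degree bound you sketch or anticipates Lemma~\ref{lem:S233}. Your remark that ``the pole order is exactly $n_2$'' is not accurate and should be softened to ``at most $n_2$,'' but this does not affect the proof.
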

\begin{proof}
From 
\begin{equation*}[Y(x_{\alpha_{2}}(-1)v_{N(k\Lambda_0)},z_2), Y(x_{\alpha_{1}}(-1)^{n_1}v_{N(k\Lambda_0)},z_1)]
=z^{-1}_1
\delta\left(\frac{z_2}{z_1}\right)Y(x_{\alpha_{2}}(0)x_{\alpha_{1}}(-1)^{n_1}v_{N(k\Lambda_0)},z_1)
\end{equation*}
and properties of $\delta$-function (cf. \cite{LL}) follows
\begin{equation*}
(z_{2}-z_{1})x_{\alpha_{2}}(z_{2})x_{n_1\alpha_{1}}(z_{1})=(z_{2}-z_{1})x_{n_1\alpha_{1}}(z_{1})x_{\alpha_{2}}(z_{2}).
\end{equation*}
Therefore, one has
\begin{equation*}
\prod_{j=1}^{n_2}(z_{j,2}-z_1)x_{\alpha_{2}}(z_{1,2})\cdots  x_{\alpha_{2}}(z_{n_2,2})x_{n_1\alpha_{1}}(z_{1})=\prod_{j=1}^{n_2}(z_{j,2}-z_1)x_{n_1\alpha_{1}}(z_{1})x_{\alpha_{2}}(z_{1,2})\cdots       x_{\alpha_{2}}(z_{n_2,2}).
\end{equation*}
For every $j$, $1\leq j \leq n_2$, using $\text{lim}_{z_{j,2}\rightarrow z_2}$, follows  
(\ref{eq:S26}).
\end{proof}
Just like in Lemma \ref{lem:S231}, from commutation relations (\ref{al:S24}) and (\ref{al:S25}) and induction on $n_2 \in \mathbb{N}$  follows:
\begin{lem}\label{lem:S233} 
For fixed $n_2 \in \mathbb{N}$, we have
\end{lem}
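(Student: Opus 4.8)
The statement of Lemma \ref{lem:S233} is the ``mirror image'' of Lemma \ref{lem:S231}, now with the roles of the two colors interchanged: instead of letting a single quasi-particle of color $2$ act on a power of $x_{\alpha_1}(-1)$, we let a single quasi-particle of color $1$ act on a power of $x_{\alpha_2}(-1)$. So I expect the statement to read: for fixed $n_2 \in \mathbb{N}$, one has
\begin{equation*}
x_{\alpha_{1}}(0)x_{\alpha_{2}}(-1)^{n_2}v_{N(k\Lambda_{0})}
= c_{n_2}\, x_{\alpha_{2}}(-1)^{n_2-1}x_{\alpha_{1}+\alpha_{2}}(-1)v_{N(k\Lambda_{0})}
+ (\text{lower charge-type terms in }\alpha_2),
\end{equation*}
and $x_{\alpha_1}(j)x_{\alpha_2}(-1)^{n_2}v_{N(k\Lambda_{0})}=0$ for $j>0$, where the exact constants $c_{n_2}$ (and possibly a term involving $x_{\alpha_1+2\alpha_2}$) come out of the bracket computation. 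The point is that $[x_{\alpha_1},x_{\alpha_2}]$ is, up to sign and scalar, $x_{\alpha_1+\alpha_2}$ by (\ref{al:S24}), and bracketing $x_{\alpha_1+\alpha_2}$ again with $x_{\alpha_2}$ produces $x_{\alpha_1+2\alpha_2}$ by (\ref{al:S25}) — so when $x_{\alpha_1}$ is pushed past several copies of $x_{\alpha_2}$ the first copy contributes an $x_{\alpha_1+\alpha_2}$ and, since $x_{\alpha_1+\alpha_2}$ does not commute with $x_{\alpha_2}$, deeper copies produce additional terms carrying $x_{\alpha_1+2\alpha_2}$ with combinatorial coefficients.

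\textbf{Carrying it out.} The argument is exactly as indicated by the sentence preceding the lemma: induction on $n_2$, using (\ref{al:S24}) and (\ref{al:S25}). First I would do the base case $n_2=1$ by a direct bracket: $x_{\alpha_1}(0)x_{\alpha_2}(-1)v = x_{\alpha_2}(-1)x_{\alpha_1}(0)v + [x_{\alpha_1}(0),x_{\alpha_2}(-1)]v = [x_{\alpha_1},x_{\alpha_2}](-1)v = x_{\alpha_1+\alpha_2}(-1)v$ (here $x_{\alpha_1}(0)v_{N(k\Lambda_0)}=0$ since $\mathfrak{g}\otimes t^{\geq 0}$ annihilates the highest weight vector). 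For the inductive step, write $x_{\alpha_2}(-1)^{n_2} = x_{\alpha_2}(-1)\cdot x_{\alpha_2}(-1)^{n_2-1}$, move $x_{\alpha_1}(0)$ across the leftmost factor picking up $[x_{\alpha_1}(0),x_{\alpha_2}(-1)] = x_{\alpha_1+\alpha_2}(-1)$, then apply the inductive hypothesis to $x_{\alpha_1}(0)x_{\alpha_2}(-1)^{n_2-1}v$ and separately track what $x_{\alpha_1+\alpha_2}(-1)$ does when commuted back into place using (\ref{al:S25}); collecting the binomial-type coefficients gives the explicit constant. Statement b) (vanishing for $j>0$) is even easier: for $j>0$, $[x_{\alpha_1}(j),x_{\alpha_2}(-1)] = x_{\alpha_1+\alpha_2}(j-1)$, and for $j>0$ we have $j-1\geq 0$, so iterating and using that all of $\mathfrak{g}\otimes t^{\geq 0}$ kills $v_{N(k\Lambda_0)}$, every resulting term is annihilated; here one also uses that $[x_{\alpha_1+\alpha_2},x_{\alpha_2}]\propto x_{\alpha_1+2\alpha_2}$ keeps the $t$-degree nonnegative throughout.

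\textbf{Main obstacle.} The conceptual content is routine; the only thing that requires care is bookkeeping the combinatorial coefficients in part a), because $x_{\alpha_1+\alpha_2}$ itself fails to commute with $x_{\alpha_2}$ (unlike the color-$1$ situation in Lemma \ref{lem:S231}, where $x_{\alpha_1}$ does commute with itself). Concretely, when $x_{\alpha_1}(0)$ is commuted through $x_{\alpha_2}(-1)^{n_2}$ one generates, besides the expected $n_2\,x_{\alpha_2}(-1)^{n_2-1}x_{\alpha_1+\alpha_2}(-1)v$, also terms proportional to $x_{\alpha_2}(-1)^{n_2-2}x_{\alpha_1+2\alpha_2}(0)v$ (and one must check whether these survive or vanish after acting on $v_{N(k\Lambda_0)}$ — $x_{\alpha_1+2\alpha_2}(0)$ does annihilate $v$, so the $n_2=2$ correction is actually of the form $x_{\alpha_1+2\alpha_2}(-1)$ times a scalar only after a further commutation). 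So the real work is a short but careful induction identifying exactly which higher-root quasi-particles appear and with which scalars; I would organize this by first proving the $j>0$ vanishing (part b), which is clean), and then peeling off the $x_{\alpha_1}(0)$ case one commutator at a time, writing the answer as $x_{\alpha_2}(-1)^{n_2-1}x_{\alpha_1+\alpha_2}(-1)v$ times $n_2$ plus an explicit lower-order remainder supported on monomials containing $x_{\alpha_1+2\alpha_2}$.
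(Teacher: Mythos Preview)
Your overall strategy --- induction on $n_2$ using the brackets (\ref{al:S24}) and (\ref{al:S25}) --- is exactly the paper's one-line proof. However, your guess at the \emph{content} of the lemma is wrong in a way that matters downstream.

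The lemma has three parts, not two, and your claim that $x_{\alpha_1}(j)x_{\alpha_2}(-1)^{n_2}v_{N(k\Lambda_0)} = 0$ for all $j>0$ is false for $j=1$. Trace through your own argument: $[x_{\alpha_1}(1),x_{\alpha_2}(-1)]$ is (up to sign) $x_{\alpha_1+\alpha_2}(0)$, which does annihilate the vacuum; but you still have to move $x_{\alpha_1+\alpha_2}(0)$ past the remaining $x_{\alpha_2}(-1)$'s, and $[x_{\alpha_1+\alpha_2}(0),x_{\alpha_2}(-1)]$ is proportional to $x_{\alpha_1+2\alpha_2}(-1)$ --- degree $-1$, not $\geq 0$. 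Your sentence ``$[x_{\alpha_1+\alpha_2},x_{\alpha_2}]\propto x_{\alpha_1+2\alpha_2}$ keeps the $t$-degree nonnegative throughout'' is precisely where the argument breaks: each commutation with $x_{\alpha_2}(-1)$ drops the $t$-degree by $1$, so two commutations starting from $j=1$ land at $-1$. The correct part (b) is
\[
x_{\alpha_{1}}(1)\,x_{\alpha_{2}}(-1)^{n_2}v_{N(k\Lambda_0)}
=\tfrac{n_2(n_2-1)}{2}\,x_{\alpha_{2}}(-1)^{n_2-2}x_{\alpha_{1}+2\alpha_{2}}(-1)v_{N(k\Lambda_0)},
\]
and vanishing holds only for $j\geq 2$ (part (c)).

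This is not cosmetic: the nonvanishing at $j=1$ is exactly why Lemma~\ref{lem:S234} carries the factor $(z_2-z_1)^{2}$ (hence $(z_2-z_1)^{2n_1}$) rather than $(z_2-z_1)^{1}$, and is the source of the $\min\{2n_{q,1},n_{p,2}\}$ (not $\min\{n_{q,1},n_{p,2}\}$) in the difference conditions and the character formula. Two smaller points: in your base case you compute $[x_{\alpha_1},x_{\alpha_2}] = x_{\alpha_1+\alpha_2}$, but (\ref{al:S24}) gives $[x_{\alpha_2},x_{\alpha_1}] = x_{\alpha_1+\alpha_2}$, so your sign is off (consistent with the $-n_2$ in part (a)); and the second-order term in (a) carries $x_{\alpha_1+2\alpha_2}(-2)$, not $(-1)$ or $(0)$, since both commutators there start from $t$-degree $0$ and $-1$ respectively.
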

\begin{itemize}
\item[a)]
$ x_{\alpha_{1}}(0)x_{\alpha_{2}}(-1)^{n_2}v_{N(k\Lambda_0)}=-n_2x_{\alpha_{2}}(-1)^{n_2-1}x_{\alpha_{1}
 +\alpha_{2}}(-1)v_{N(k\Lambda_0)}$ 
\begin{equation*}
 +\frac{n_2(n_2-1)}{2}x_{\alpha_{2}}(-1)^{n_2-2}x_{\alpha_{1}+2\alpha_{2}}(-2)v_{N(k\Lambda_0)}\end{equation*}
\item[b)] 
$x_{\alpha_{1}}(1)x_{\alpha_{2}}(-1)^{n_2}v_{N(k\Lambda_0)}=\frac{n_2(n_2-1)}{2}x_{\alpha_{2}}(-1)^{n_2-2}x_{\alpha_{1}+2\alpha_{2}}(-1)v_{N(k\Lambda_0)}$
\item[c)] For every $j\geq 2$, we have $x_{\alpha_{1}}(j)x_{\alpha_{2}}(-1)^{n_2}v_{N(k\Lambda_0)}=0$.
\end{itemize}
\begin{flushright}
$\square$
\end{flushright}

\begin{lem}\label{lem:S234}
Let $n_1,n_2 \in \mathbb{N}$ be fixed. One has
\begin{equation}\label{eq:S27}
(z_{2}-z_{1})^{2n_1}x_{n_1\alpha_{1}}(z_{1})x_{n_2\alpha_{2}}(z_{2})=(z_{2}-z_{1})^{2n_1}x_{n_2\alpha_{2}}(z_{2})x_{n_1\alpha_{1}}(z_{1}).
\end{equation}
\end{lem}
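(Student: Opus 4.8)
The plan is to mimic the proof of Lemma \ref{lem:S232}, replacing the relations of Lemma \ref{lem:S231} with those of Lemma \ref{lem:S233}, which govern the action of $x_{\alpha_1}(j)$ on $x_{\alpha_2}(-1)^{n_2}v_{N(k\Lambda_0)}$. The key observation is that, by Lemma \ref{lem:S233}, $x_{\alpha_1}(j)x_{\alpha_2}(-1)^{n_2}v_{N(k\Lambda_0)}=0$ for all $j\ge 2$, while for $j=0$ and $j=1$ the result is nonzero. Feeding this into the commutator formula (\ref{al:S14}), applied with $\alpha=\alpha_1$, $\beta=\alpha_2$, $r=n_2$, only the terms $j=0$ and $j=1$ survive on the right-hand side:
\begin{align*}
[Y(x_{\alpha_1}(-1)v_{N(k\Lambda_0)},z_1),&\ Y(x_{\alpha_2}(-1)^{n_2}v_{N(k\Lambda_0)},z_2)]
= z_2^{-1}\delta\!\left(\frac{z_1}{z_2}\right)Y(x_{\alpha_1}(0)x_{\alpha_2}(-1)^{n_2}v_{N(k\Lambda_0)},z_2)\\
&\quad -\left(\frac{d}{dz_1}z_2^{-1}\delta\!\left(\frac{z_1}{z_2}\right)\right)Y(x_{\alpha_1}(1)x_{\alpha_2}(-1)^{n_2}v_{N(k\Lambda_0)},z_2).
\end{align*}

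First I would deduce, using the standard $\delta$-function identities (multiplying by $(z_1-z_2)^2$ kills both $z_2^{-1}\delta(z_1/z_2)$ and its derivative, since $(z_1-z_2)^2\frac{d}{dz_1}z_2^{-1}\delta(z_1/z_2)=0$; cf. \cite{LL}), that
\begin{equation*}
(z_1-z_2)^2 x_{\alpha_1}(z_1)x_{n_2\alpha_2}(z_2)=(z_1-z_2)^2 x_{n_2\alpha_2}(z_2)x_{\alpha_1}(z_1).
\end{equation*}
Here I use that $Y(x_{\alpha_2}(-1)^{n_2}v_{N(k\Lambda_0)},z_2)=x_{\alpha_2}(z_2)^{n_2}=x_{n_2\alpha_2}(z_2)$ and $Y(x_{\alpha_1}(-1)v_{N(k\Lambda_0)},z_1)=x_{\alpha_1}(z_1)$, as recorded in Section \ref{ss:modules}. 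Next, to pass from charge $1$ to charge $n_1$ in the first color, I introduce $n_1$ independent variables $z_{1,1},\dots,z_{n_1,1}$, write $x_{\alpha_1}(z_{1,1})\cdots x_{\alpha_1}(z_{n_1,1})$, and apply the charge-$1$ identity $n_1$ times to move $x_{n_2\alpha_2}(z_2)$ past each factor, obtaining
\begin{equation*}
\prod_{l=1}^{n_1}(z_{l,1}-z_2)^2\, x_{\alpha_1}(z_{1,1})\cdots x_{\alpha_1}(z_{n_1,1})x_{n_2\alpha_2}(z_2)=\prod_{l=1}^{n_1}(z_{l,1}-z_2)^2\, x_{n_2\alpha_2}(z_2)x_{\alpha_1}(z_{1,1})\cdots x_{\alpha_1}(z_{n_1,1}).
\end{equation*}
Finally, taking $\lim_{z_{l,1}\to z_1}$ successively for $l=1,\dots,n_1$ — which is legitimate because all the involved products of operators act as summable families on the highest weight module, so the specialization is well defined — collapses the left side to $(z_1-z_2)^{2n_1}x_{n_1\alpha_1}(z_1)x_{n_2\alpha_2}(z_2)$ and the right side to $(z_1-z_2)^{2n_1}x_{n_2\alpha_2}(z_2)x_{n_1\alpha_1}(z_1)$, after rewriting $(z_1-z_2)^{2n_1}=(z_2-z_1)^{2n_1}$; this is exactly (\ref{eq:S27}).

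The main obstacle, and the one point requiring care, is the bookkeeping of $\delta$-function manipulations: one must check that multiplying by $(z_1-z_2)^2$ genuinely annihilates both the $\delta$-term and the $\delta'$-term on the right of the commutator formula — the identity $(z_1-z_2)\,z_2^{-1}\delta(z_1/z_2)=0$ handles the first term, but the derivative term needs $(z_1-z_2)^2\left(\frac{d}{dz_1}z_2^{-1}\delta(z_1/z_2)\right)=0$, which follows from differentiating $(z_1-z_2)^2 z_2^{-1}\delta(z_1/z_2)=0$ and using $(z_1-z_2)z_2^{-1}\delta(z_1/z_2)=0$ again. A secondary subtlety is justifying that the simultaneous limits $z_{l,1}\to z_1$ commute with the (infinite) sums defining the vertex operators; this is the standard summability argument already invoked for Lemma \ref{lem:S232}, and it applies verbatim here since we work on a highest weight module. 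Note that for the short root $\alpha_2$ the exponent is $2n_1$ rather than $n_1$, reflecting that $\langle\alpha_1,\alpha_2\rangle=-1$ with the short-root normalization forces the bracket $[x_{\alpha_1}(z_1),x_{\alpha_2}(z_2)]$ to have a pole of order two (through $x_{\alpha_1+2\alpha_2}$), which is precisely what Lemma \ref{lem:S233} encodes.
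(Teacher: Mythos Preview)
Your proof is correct and follows essentially the same approach as the paper: apply the commutator formula (\ref{al:S14}) together with Lemma \ref{lem:S233} to get the charge-$1$ relation $(z_2-z_1)^2 x_{\alpha_1}(z_1)x_{n_2\alpha_2}(z_2)=(z_2-z_1)^2 x_{n_2\alpha_2}(z_2)x_{\alpha_1}(z_1)$, then introduce $n_1$ independent variables and take the limit $z_{l,1}\to z_1$. Your write-up is in fact more detailed than the paper's, as you spell out the $\delta$-function identities needed to kill both the $\delta$- and $\delta'$-terms and comment on the summability justification for the limits.
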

\begin{proof}
Like in Lemma \ref{lem:S232}, from Lemma \ref{lem:S233}, commutator formula for vertex operators 
\begin{equation*}[Y(x_{\alpha_{1}}(-1)v_{N(k\Lambda_0)},z_1), Y(x_{\alpha_{2}}(-1)^{n_2}v_{N(k\Lambda_0)},z_2)]\end{equation*}
 \begin{equation*}= z^{-1}_2
\delta(\frac{z_1}{z_2})Y(x_{\alpha_{1}}(0)x_{\alpha_{2}}(-1)^{n_2}v_{N(k\Lambda_0)},z_2)- \left(\frac{d}{dz_1}
 \right)z^{-1}_2
\delta(\frac{z_1}{z_2})Y(x_{\alpha_{1}}(1)x_{\alpha_{2}}(-1)^{n_2}v_{N(k\Lambda_0)},z_2).\end{equation*}
and properties of $\delta$-function follows
\begin{equation*}(z_{2}-z_{1})^{2}x_{\alpha_{1}}(z_{1})x_{n_2\alpha_{2}}(z_{2})=(z_{2}-z_{1})^{2}x_{n_2\alpha_{2}}(z_{2})x_{\alpha_{1}}(z_{1}).\end{equation*}
Now, we have
\begin{equation*}\prod_{j=1}^{n_1}(z_{j,1}-z_1)^2x_{\alpha_{1}}(z_{1,1})
\cdots    x_{\alpha_{1}}(z_{n_1,1})x_{n_2\alpha_{2}}(z_{2})=\prod_{j=1}^{n_1}(z_{2}-z_{j,1})^2x_{n_2\alpha_{2}}(z_{2})x_{\alpha_{1}}(z_{1,1})\cdots  
 x_{\alpha_{1}}(z_{n_1,1}).\end{equation*}  
For every $j$, $1\leq j \leq n_1$, using limit $\text{lim}_{z_{j,1}\rightarrow z_1}$, we get (\ref{eq:S27}).
\end{proof}

\begin{lem}\label{lem:S235}
For generating function  
\begin{equation*}x_{n_{r^{(1)}_{2},2}\alpha_{2}}(z_{r^{(1)}_{2},2})\cdots  x_{n_{1,1}\alpha_{1}}(z_{1,1})\end{equation*} of charge-type  
$\left(n_{r_{2}^{(1)},2},\ldots , n_{1,2};n_{r_{1}^{(1)},1},\ldots ,n_{1,1}\right)$ and dual-charge-type 
$\left(r_{2}^{(1)},r_{2}^{(2)},\ldots ;\right.$\\ $\left. r_{1}^{(1)}, r_{1}^{(2)}, \ldots \right)$, one has
\begin{align}\nonumber
\left[\prod^{r^{(1)}_{2}}_{p=1}\prod^{r^{(1)}_{1}}_{q=1}\left(1-\frac{z_{q,1}}{z_{p,2}}\right)^{\emph{\text{min}}\left\{ 
n_{p,2},2n_{q,1}\right\}} \right] x_{n_{r^{(1)}_{2},2}\alpha_{2}}(z_{r^{(1)}_{2},2}) \cdots         x_{n_{1,1}\alpha_{1}}(z_{1,1})v_{N(k\Lambda_{0})}\\
\label{al:S28} 
\in \left[\prod^{r^{(1)}_{2}}_{p=1} 
z_{p,2}^{-\sum^{r^{(1)}_{1}}_{q=1}\emph{\text{min}}\left\{ 
n_{p,2},2n_{q,1}\right\}}\right]W_{N(k\Lambda_{0})}\left[\left[z_{r^{(1)}_{2},2},\ldots ,z_{1,1}\right]\right]. 
\end{align}
\end{lem}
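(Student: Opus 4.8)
The plan is to reduce the statement of Lemma~\ref{lem:S235} to the two-color commutation relations already established in Lemmas~\ref{lem:S232} and~\ref{lem:S234}, applied separately to each pair consisting of one charge-$n_{p,2}$ quasi-particle of color $2$ and one charge-$n_{q,1}$ quasi-particle of color $1$. First I would observe that from Lemma~\ref{lem:S232} we have, after replacing the single charge-$1$ color-$2$ operator by a charge-$n_{p,2}$ operator via the same limit procedure used there,
\begin{equation*}
(z_{p,2}-z_{q,1})^{n_{p,2}}x_{n_{q,1}\alpha_1}(z_{q,1})x_{n_{p,2}\alpha_2}(z_{p,2})=(z_{p,2}-z_{q,1})^{n_{p,2}}x_{n_{p,2}\alpha_2}(z_{p,2})x_{n_{q,1}\alpha_1}(z_{q,1}),
\end{equation*}
while Lemma~\ref{lem:S234} gives the same identity with exponent $2n_{q,1}$ in place of $n_{p,2}$. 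Taking whichever of the two exponents is smaller — i.e. $\min\{n_{p,2},2n_{q,1}\}$ — both identities hold (the larger-exponent one is a consequence), so each color-$2$ generating function can be moved past each color-$1$ generating function at the cost of the factor $(z_{p,2}-z_{q,1})^{\min\{n_{p,2},2n_{q,1}\}}$.

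Next I would multiply the product of generating functions on the left-hand side of~\eqref{al:S28} by $\prod_{p,q}(z_{p,2}-z_{q,1})^{\min\{n_{p,2},2n_{q,1}\}}$ and use the commutation identities above, together with the fact that color-$2$ generating functions commute among themselves and likewise for color $1$, to rewrite the whole expression so that all color-$1$ generating functions stand to the \emph{left} of all color-$2$ ones. After this reordering, the color-$1$ operators act first on $v_{N(k\Lambda_0)}$; since $\mathcal{L}(\mathfrak{n}_{\alpha_1})$ together with $\mathcal{L}(\mathfrak{n}_{\alpha_2})$ generate $\mathcal{L}(\mathfrak{n}_+)$, and every coefficient of the resulting product of generating functions lies in $U(\mathcal{L}(\mathfrak{n}_+))$, the result lies in $W_{N(k\Lambda_0)}$; moreover, because all the $x_{\alpha_2}(j)$ and $x_{\alpha_1}(j)$ with $j$ sufficiently large annihilate $v_{N(k\Lambda_0)}$ (the module is a highest-weight module, so only finitely many negative-mode monomials survive in each graded piece), the matrix coefficients are polynomial rather than merely formal power series in the $z$'s, giving membership in $W_{N(k\Lambda_0)}[[z_{r_2^{(1)},2},\ldots,z_{1,1}]]$ — in fact in the polynomial ring.

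Finally I would divide back by $\prod_{p}z_{p,2}^{\sum_q \min\{n_{p,2},2n_{q,1}\}}$: writing $(z_{p,2}-z_{q,1})^{\min\{n_{p,2},2n_{q,1}\}}=z_{p,2}^{\min\{n_{p,2},2n_{q,1}\}}(1-z_{q,1}/z_{p,2})^{\min\{n_{p,2},2n_{q,1}\}}$, the factor $\prod_p z_{p,2}^{\sum_q\min\{n_{p,2},2n_{q,1}\}}$ is exactly the monomial appearing in front of $W_{N(k\Lambda_0)}$ in~\eqref{al:S28}, so the $\prod_{p,q}(1-z_{q,1}/z_{p,2})^{\min\{n_{p,2},2n_{q,1}\}}$-multiple of the original product lies in $\bigl[\prod_p z_{p,2}^{-\sum_q\min\{n_{p,2},2n_{q,1}\}}\bigr]W_{N(k\Lambda_0)}[[z_{r_2^{(1)},2},\ldots,z_{1,1}]]$, which is precisely~\eqref{al:S28}. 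The main obstacle I anticipate is the bookkeeping in the reordering step: one must track which pairs $(p,q)$ require exponent $n_{p,2}$ versus $2n_{q,1}$ and verify that the product of the minimal exponents is genuinely sufficient to clear all the negative powers of $(z_{p,2}-z_{q,1})$ simultaneously when several color-$1$ and several color-$2$ quasi-particles are present; this is handled by iterating the two-variable identities one pair at a time, using that the already-commuted pairs contribute only nonnegative powers of the relevant differences and so do not interfere.
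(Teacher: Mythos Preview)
Your approach is exactly the paper's: combine Lemmas~\ref{lem:S232} and~\ref{lem:S234} to get the locality relation with exponent $\min\{n_{p,2},2n_{q,1}\}$, deduce that the product multiplied by $\prod_{p,q}(z_{p,2}-z_{q,1})^{\min\{n_{p,2},2n_{q,1}\}}$ applied to $v_{N(k\Lambda_0)}$ lies in $W_{N(k\Lambda_0)}[[z_{\cdot,\cdot}]]$, then factor out the powers of $z_{p,2}$. Two small slips in your write-up are worth fixing. First, ``color-$1$ on the left'' means color-$1$ acts \emph{last}, not first; the original expression already has color-$1$ acting first. Second, and more substantively, a single reordering only shows regularity in the variables of whichever color acts first on $v_{N(k\Lambda_0)}$; it does not by itself rule out negative powers in the variables of the color acting second. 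The clean way (implicit in the paper's appeal to the creation property) is to observe that once the locality factor $\prod(z_{p,2}-z_{q,1})^{\min}$ is present, \emph{any} one of the generating functions can be moved to the far right, and then creation gives nonnegative powers of its variable; doing this for each variable in turn yields membership in the full power-series ring. Equivalently, use both orderings and intersect the two regularity conditions.
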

\begin{proof}
(\ref{al:S28}) follows from 
\begin{align}\nonumber
\prod^{r^{(1)}_{2}}_{p=1}\prod^{r^{(1)}_{1}}_{q=1}(z_{p,2}-z_{q,1})^{\text{min}\left\{2n_{q,1},n_{p,2}\right\}}x_{n_{r^{(1)}_{2},2}\alpha_{2}}(z_{r^{(1)}_{2},2})\cdots   x_{n_{1,1}\alpha_{1}}(z_{1,1})v_{N(k{\Lambda}_{0})}\\
\label{al:S29}
\in W_{N(k\Lambda_{0})}\left[\left[z_{r^{(1)}_{2},2},\ldots , z_{1,1}\right]\right].
\end{align}
(\ref{al:S29}) is immediate from creation property of vertex operators (cf. \cite{LL}), Lemma \ref{lem:S232} and Lemma \ref{lem:S234}, i.e.
\begin{equation*}(z_{2}-z_{1})^{\text{min}\left\{2n_1,n_2\right\}}x_{n_1\alpha_{1}}(z_{1})x_{n_2\alpha_{2}}(z_{2})
=(z_{2}-z_{1})^{\text{min}\left\{2n_1,n_2\right\}}x_{n_2\alpha_{2}}(z_{2})x_{n_1\alpha_{1}}(z_{1}).\end{equation*}
\end{proof}

\subsection{Quasi-particle bases}
We start with a definition of a set $B_{W_{N(k\Lambda_{0})}}$:
\begin{equation}\label{eq:S210}
B_{W_{N(k\Lambda_{0})}}= \bigcup_{\substack{0 \leq n_{r_{1}^{(1)},1}\leq \ldots \leq n_{1,1}\\
0\leq n_{r_{2}^{(1)},2}\leq \ldots \leq  n_{1,2}}}\left(\text{or, equivalently} \ \ \bigcup_{\substack{r_{1}^{(1)}\geq
r_{1}^{(2)} \geq \cdots \geq 0\\ r_{2}^{(1)}\geq r_{2}^{(2)}\geq  \cdots  \geq  0}}\right)
\end{equation}
\begin{equation*}\left\{b\right.=b(\alpha_{2})b(\alpha_{1})
=x_{n_{r_{2}^{(1)},2}\alpha_{2}}(m_{r_{2}^{(1)},2})\cdots  x_{n_{1,1}\alpha_{1}}(m_{1,1}):\end{equation*}
\begin{align}\nonumber
\left|
\begin{array}{l}
m_{p,1}\leq  -n_{p,1} - \sum_{p>p'>0} 2 \ \text{min}\{n_{p,1}, 
n_{p',1}\}, \  1\leq p\leq r_{1}^{(1)};\\
m_{p+1,1}\leq  m_{p,1}-2n_{p,1} \  \text{if} \ n_{p,1}=n_{p+1,1}, \  1\leq p \leq  r_{1}^{(1)}-1;\\
m_{p,2}\leq  -n_{p,2}+ \sum_{q=1}^{r_{1}^{(1)}}\text{min}\left\{2n_{q,1},n_{p,2}\right\}- \sum_{p>p'>0} 
2 \ \text{min}\{n_{p,2}, n_{p',2}\},  1\leq  p\leq  r_{2}^{(1)};\\
m_{p+1,2} \leq   m_{p,2}-2n_{p,2} \  \text{if} \ n_{p+1,2}=n_{p,2}, \ 1\leq p \leq  r_{2}^{(1)}-1
\end{array}
\right\}.
\end{align}
We will prove the theorem:
\begin{thm}\label{thm:1} 
The set 
\begin{equation*}
\mathfrak{B}_{W_{N(k\Lambda_{0})}}=\left\{bv_{N(k\Lambda_{0})}: b \in B_{W_{N\left(k\Lambda_{0}\right)}}\right\}
\end{equation*} forms a basis for the principal subspace $W_{N\left(k\Lambda_{0}\right)}$ of $N\left(k\Lambda_{0}\right)$.
\end{thm}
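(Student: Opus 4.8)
The plan is to establish Theorem~\ref{thm:1} in two parts: first that $\mathfrak{B}_{W_{N(k\Lambda_{0})}}$ spans $W_{N(k\Lambda_{0})}$, and then that it is linearly independent. For the spanning part, I would start from the fact (Lemma in~\ref{ss:modules}) that $W_{N(k\Lambda_{0})} = U v_{N(k\Lambda_{0})}$, so every vector is a linear combination of monomials $b(\alpha_2)b(\alpha_1) v_{N(k\Lambda_{0})}$ with the color-$2$ quasi-particles to the left. Using the relations among quasi-particles of the same color (Corollary~\ref{cor:S21}) one pushes the energies of equal-charge, equal-color quasi-particles apart to meet the difference-two conditions $m_{p+1,i} \le m_{p,i} - 2n_{p,i}$, at the cost of introducing monomials with a quasi-particle of strictly larger charge; since $N(k\Lambda_{0})$ is a free module there is no charge bound here, but the charge-type is bounded below in the lexicographic order, so this process terminates. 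The cross-color lower bounds on $m_{p,2}$ come from Lemma~\ref{lem:S235}: multiplying the generating function $b(\alpha_2;z)b(\alpha_1;z) v_{N(k\Lambda_{0})}$ by $\prod_{p,q}(1 - z_{q,1}/z_{p,2})^{\min\{n_{p,2}, 2n_{q,1}\}}$ lands in $\bigl[\prod_p z_{p,2}^{-\sum_q \min\{n_{p,2},2n_{q,1}\}}\bigr] W_{N(k\Lambda_{0})}[[z]]$, which translates, after extracting coefficients, into exactly the stated bound $m_{p,2} \le -n_{p,2} + \sum_q \min\{2n_{q,1}, n_{p,2}\} - \sum_{p'<p} 2\min\{n_{p,2},n_{p',2}\}$; monomials violating it are rewritten as combinations of admissible ones plus monomials sent into $W_{N(k\Lambda_{0})}$ by earlier lowering. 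A standard induction on the linear order~(\ref{eq:S23}) then shows every monomial vector is a combination of vectors in $\mathfrak{B}_{W_{N(k\Lambda_{0})}}$.

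For linear independence I would argue by contradiction, taking a minimal (in the order~(\ref{eq:S23})) counterexample relation among elements of $\mathfrak{B}_{W_{N(k\Lambda_{0})}}$ within a fixed color-type weight space $U_{(r_2,r_1)}$. The two key tools, both modeled on Georgiev's argument in~\cite{G}, are: (i) an operator built from a coefficient of an intertwining operator among the level-$k$ standard modules (equivalently, the simple-current-type maps available for $\widetilde{B_2}$) that shifts the weight by a fundamental weight and, applied repeatedly, strips off the quasi-particles of lowest charge one color at a time; and (ii) the $\mathfrak{h}$-weight projection $\pi$ that, after embedding $N(k\Lambda_{0})$-data into a tensor product of level-$1$ pieces, distributes the constituent charge-$1$ factors of each quasi-particle of charge $>1$ among distinct tensor factors, so that the operators in (i) interact correctly with higher-charge quasi-particles. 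Applying these operators to the minimal relation and using the commutation rules~(\ref{eq:S26}), (\ref{eq:S27}) together with Lemma~\ref{lem:S235} to control leading terms, one reduces the relation to one of strictly smaller charge-type or strictly smaller energy-sequence, contradicting minimality; the base case is the trivial charge-type, where the statement is clear.

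The main obstacle is constructing and controlling the operator in (i): unlike the $A_l^{(1)}$ setting of Georgiev, here there is no Frenkel--Kac construction of the level-$1$ modules and no Dong--Lepowsky intertwining operators, so one must extract the needed weight-shifting operator directly from the vertex-algebra structure of $L(k\Lambda_{0})$ and the known intertwining operators among $L(\Lambda_0), L(\Lambda_1), L(\Lambda_2)$, and then verify that its leading action on a monomial vector $bv$ is, up to a nonzero scalar, again a monomial vector of the expected, strictly smaller charge-type --- this is where the asymmetry between the short root $\alpha_2$ (appearing at ``level $2$'' in each $\widetilde{\mathfrak{sl}}_2(\alpha_2)$-component, hence charges up to $2k$) and the long root $\alpha_1$ (charges up to $k$) must be handled carefully, and where the cross-color exponent $\min\{n_{p,2}, 2n_{q,1}\}$ in Lemma~\ref{lem:S235} enters the bookkeeping. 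Once that operator is in hand, the rest of the induction is a bookkeeping exercise closely parallel to~\cite{G}. Finally, the basis $\mathfrak{B}_{W_{L(k\Lambda_{0})}}$ and the character formulas will be read off by restricting the argument through the quotient map $N(k\Lambda_{0}) \to L(k\Lambda_{0})$, which only imposes the additional charge bounds coming from~(\ref{al:S15}) and~(\ref{al:S16}).
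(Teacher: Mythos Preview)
Your spanning argument is essentially what the paper does (it cites Georgiev~\cite{G} and omits the details), so that half is fine.

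The linear-independence plan has a genuine gap. All of the tools you propose---the coefficient $A_{\omega_1}$ of an intertwining operator, the simple-current map $e_{\omega_1}$, the tensor-product projection $\pi_{\mathfrak{R}}$---live on the \emph{standard} modules: the projection uses the embedding $L(k\Lambda_0)\hookrightarrow L(\Lambda_0)^{\otimes k}$, the intertwining operator is of type $\binom{L(\Lambda_1)}{L(\Lambda_1)\ L(\Lambda_0)}$, and $e_{\omega_1}$ is a bijection $L(\Lambda_0)\to L(\Lambda_1)$. None of this machinery exists for the generalized Verma module $N(k\Lambda_0)$, and the only map you have, the quotient $N(k\Lambda_0)\to L(k\Lambda_0)$, annihilates every monomial containing a quasi-particle of color~$1$ and charge $>k$ or color~$2$ and charge $>2k$. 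Since $B_{W_{N(k\Lambda_0)}}$ carries no charge bound, a putative dependence relation among elements of $\mathfrak{B}_{W_{N(k\Lambda_0)}}$ could lie entirely in the kernel of that quotient, and your operators would never see it. There is no ``embedding $N(k\Lambda_0)$-data into a tensor product of level-$1$ pieces''.

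The paper runs the logic in the opposite direction from your last paragraph. It \emph{first} proves linear independence of $\mathfrak{B}_{W_{L(k\Lambda_0)}}$ (Theorem~\ref{S67T1}), where the standard-module tools are available, and \emph{then} deduces the $N$ case by the following trick: the quotient $W=U(\mathcal{L}(\mathfrak{n}_+))/U(\mathcal{L}(\mathfrak{n}_+))\mathcal{L}(\mathfrak{n}_+)_{\ge 0}$ does not see the central element (Remark~\ref{S8N1}), so $W_{N(k\Lambda_0)}\cong W\cong W_{N(k'\Lambda_0)}$ as $\mathcal{L}(\mathfrak{n}_+)$-modules for every $k'$. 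Given a \emph{finite} relation $\sum_a c_a b_a v_{N(k\Lambda_0)}=0$, pick $k'$ large enough that every charge appearing in every $b_a$ satisfies $n_{p,1}\le k'$ and $n_{p,2}\le 2k'$; transport the relation through $W$ to $W_{N(k'\Lambda_0)}$, then push it down to $W_{L(k'\Lambda_0)}$, where each $b_a v_{k'\Lambda_0}$ now survives and lies in the already-proved basis $\mathfrak{B}_{W_{L(k'\Lambda_0)}}$. That forces all $c_a=0$. This ``enlarge the level'' step is the missing idea in your outline; with it, the $L$ basis is input to Theorem~\ref{thm:1}, not a corollary of it.
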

The first step in a proof of the Theorem \ref{thm:1} is the following proposition:
\begin{prop}\label{prop:S21} 
The set $\mathfrak{B}_{W_{N(k\Lambda_{0})}}=\left\{bv_{N(k\Lambda_{0})}: b \in B_{W_{N\left(k\Lambda_{0}\right)}}\right\}$ spans the  principal sub\-space $W_{N\left(k\Lambda_{0}\right)}$ of $N\left(k\Lambda_{0}\right)$.
\end{prop}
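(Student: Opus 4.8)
The plan is to show that an arbitrary monomial vector $bv_{N(k\Lambda_0)}\in W_{N(k\Lambda_0)}$ — where, by the Lemma on $W_{N(k\Lambda_0)}=Uv_{N(k\Lambda_0)}$, we may assume $b=b(\alpha_2)b(\alpha_1)$ with all quasi-particles of color $1$ standing to the right of all quasi-particles of color $2$ — can be rewritten as a linear combination of vectors $\overline b v_{N(k\Lambda_0)}$ with $\overline b\in B_{W_{N(k\Lambda_0)}}$. I would set up a double induction: the outer induction is on the color-type $(r_2,r_1)$ (using the gradation \eqref{eq:S22}), with the inner induction on the total energy $-M$ of $b$ (equivalently on the ordering \eqref{eq:S23} restricted to fixed charge-type), so that every relation which either strictly lowers a charge-type in a way that respects the gradation, or keeps the charge-type but produces monomials that are strictly larger in the energy order, is admissible as an inductive step.

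First I would treat the monochromatic pieces $b(\alpha_1)$ and $b(\alpha_2)$ separately. Arrange the quasi-particles of each color by nondecreasing charge, then by the convention that energies of equal-charge quasi-particles decrease from right to left. Applying Corollary~\ref{cor:S21} repeatedly converts any pair $x_{n\alpha_i}(m)x_{n\alpha_i}(m')$ violating $m\le m'-2n$ into a combination of pairs satisfying the difference-two-$n$ condition plus monomials containing a strictly higher-charge quasi-particle $x_{(n+1)\alpha_i}(j')$; the latter change the charge-type (more precisely, they change the dual-charge-type upward), so they fall under a previously handled case in a suitable well-founded order on charge-types. This yields, for each color, the difference conditions among equal-charge quasi-particles, i.e. the lines ``$m_{p+1,i}\le m_{p,i}-2n_{p,i}$ if $n_{p,i}=n_{p+1,i}$''. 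The initial (leftmost) energy bounds $m_{p,i}\le -n_{p,i}-\sum_{p>p'>0}2\,\mathrm{min}\{n_{p,i},n_{p',i}\}$ come from the same analysis combined with the fact that $x_{\alpha_i}(j)v_{N(k\Lambda_0)}=0$ for $j\ge0$ (so a single charge-$1$ quasi-particle $x_{\alpha_i}(m)$ acting on the vacuum vanishes unless $m\le-1$), pushed through the charge-$n$ quasi-particles via Lemma~\ref{lem:S21}; I would make this precise by an induction on charge that mirrors Georgiev's argument in \cite{G}.

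Next, the interaction between the two colors. Lemma~\ref{lem:S235} is the crucial input: multiplying the generating function $x_{n_{r_2^{(1)},2}\alpha_2}(z_{r_2^{(1)},2})\cdots x_{n_{1,1}\alpha_1}(z_{1,1})$ by $\prod_{p,q}(1-z_{q,1}/z_{p,2})^{\mathrm{min}\{n_{p,2},2n_{q,1}\}}$ lands, after applying to $v_{N(k\Lambda_0)}$, inside $W_{N(k\Lambda_0)}[[z]]$ with the stated shift in the $z_{p,2}$-degree. Extracting the appropriate coefficient shows that any color-$2$ quasi-particle whose energy violates the bound $m_{p,2}\le -n_{p,2}+\sum_q \mathrm{min}\{2n_{q,1},n_{p,2}\}-\sum_{p>p'>0}2\,\mathrm{min}\{n_{p,2},n_{p',2}\}$ can be expressed through monomials with the color-$2$ quasi-particles shifted to strictly lower energy (hence larger in the order \eqref{eq:S23}, since color $1$ is compared first and is unchanged) together with monomials already in the span by the monochromatic step. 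Running the monochromatic reductions and this color-mixing reduction alternately, each pass strictly decreasing energy or strictly simplifying the charge-type, the double induction terminates and the resulting monomials lie in $B_{W_{N(k\Lambda_0)}}$.

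The main obstacle I anticipate is bookkeeping the termination: one must verify that the relations never cycle — that applying Corollary~\ref{cor:S21}, the leftmost-bound reductions, and the Lemma~\ref{lem:S235} reductions in some fixed order always moves the monomial strictly downward in a well-founded order (charge-type first, then total energy with sign, then the lexicographic energy order \eqref{eq:S23}), and in particular that the higher-charge ``error terms'' produced by Corollary~\ref{cor:S21} are genuinely handled by the outer induction rather than re-creating the configurations we are trying to eliminate. Once the well-foundedness of this order is nailed down, the spanning statement follows; linear independence (the harder half of Theorem~\ref{thm:1}) is deferred to Section~3.
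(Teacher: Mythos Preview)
Your proposal is essentially correct and follows the same strategy as the paper, which simply states that ``the proof of Proposition~\ref{prop:S21} is the same as in \cite{G}'' modulo the absence of a charge bound; the ingredients you name (Lemma~\ref{lem:S21}/Corollary~\ref{cor:S21} for same-color reductions and Lemma~\ref{lem:S235} for the color-$2$ energy bounds in the presence of color-$1$ quasi-particles) are exactly Georgiev's. One terminological slip: your ``outer induction on the color-type $(r_2,r_1)$'' cannot do any work, since all the relations preserve color-type by the gradation~\eqref{eq:S22}; what you actually need (and what your later discussion implicitly uses) is, within a fixed color-type, an induction on \emph{charge-type} --- there are only finitely many charge-types for a given $(r_2,r_1)$, and the higher-charge error terms from Corollary~\ref{cor:S21} move strictly in one direction in the order~\eqref{eq:S23}, which is what makes the outer induction terminate.
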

\begin{flushright}
$\square$
\end{flushright}
Besides the fact that quasi-particles in our set $B_{W_{N\left(k\Lambda_{0}\right)}}$ have no limit on the charge, the proof of the Proposition \ref{prop:S21} is the same as in \cite{G}, so we omit the proof of Proposition \ref{prop:S21}. 
\par Denote with $f_{k}$ surjective linear map of vector spaces:
\begin{align*}
f_{k}: W_{N(k\Lambda_{0})}\rightarrow W_{L(k\Lambda_{0})}.
\end{align*}
From relations (\ref{al:S15}) and (\ref{al:S16}) follows 
\begin{equation*}f_{k}\left(\mathfrak{B}_{W_{N(k\Lambda_{0})}}\right)=\mathfrak{B}_{W_{L(k\Lambda_{0})}}.\end{equation*}
Elements of the set $\mathfrak{B}_{W_{L(k\Lambda_{0})}}$ are monomial vectors $bv_{k\Lambda_{0}}$, where monomials $b$ are from the set $B_{W_{L\left(k\Lambda_{0}\right)}}$:
\begin{equation}\label{SkupL}
B_{W_{L\left(k\Lambda_{0}\right)}}= \bigcup_{\substack{n_{r_{1}^{(1)},1}\leq \ldots \leq n_{1,1}\leq 
k\\n_{r_{2}^{(1)},2}\leq \ldots \leq n_{1,2}\leq 2k}}\left(\text{or, equivalently} \ \ \ 
\bigcup_{\substack{r_{1}^{(1)}\geq \cdots\geq r_{1}^{(k)}\geq 0\\r_{2}^{(1)}\geq \cdots\geq r_{2}^{(2k)}\geq 
0}}\right)
\end{equation}
\begin{align}\nonumber
\left|
\begin{array}{l}
m_{p,1}\leq  -n_{p,1} - \sum_{p>p'>0} 2 \ \text{min}\{n_{p,1}, 
n_{p',1}\}, \  1\leq  p\leq r_{1}^{(1)};\\
m_{p+1,1}\leq  m_{p,1}-2n_{p,1} \  \text{if} \ n_{p,1}=n_{p+1,1}, \  1\leq  p\leq r_{1}^{(1)}-1;\\
m_{p,2}\leq  -n_{p,2}+ \sum_{q=1}^{r_{1}^{(1)}}\text{min}\left\{2n_{q,1},n_{p,2}\right\}- \sum_{p>p'>0} 
2 \ \text{min}\{n_{p,2}, n_{p',2}\}, \ 1\leq  p\leq r_{2}^{(1)};\\
m_{p+1,2} \leq   m_{p,2}-2n_{p,2} \  \text{if} \ n_{p+1,2}=n_{p,2}, \ 1\leq  p\leq r_{2}^{(1)}-1
\end{array}
\right\}.
\end{align}
Now, from the Proposition \ref{prop:S21} and the above considerations follows: 
\begin{prop}\label{prop:S22} The set $\mathfrak{B}_{W_{L(k{\Lambda}_{0})}}=\left\{bv_{k\Lambda_{0}}:b \in B_{W_{L\left(k\Lambda_{0}\right)}}\right\}$ spans the principal subspace $W_{L\left(k\Lambda_{0}\right)}$.
\end{prop}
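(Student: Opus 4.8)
\medskip
\noindent The plan is to derive Proposition~\ref{prop:S22} directly from Proposition~\ref{prop:S21}, by transporting the spanning set along the surjection $f_{k}\colon W_{N(k\Lambda_{0})}\to W_{L(k\Lambda_{0})}$ — exactly the mechanism used to pass from generalized Verma to standard modules in \cite{G}. Since $\mathfrak{B}_{W_{N(k\Lambda_{0})}}$ spans $W_{N(k\Lambda_{0})}$ by Proposition~\ref{prop:S21} and $f_{k}$ is onto, the image $f_{k}\bigl(\mathfrak{B}_{W_{N(k\Lambda_{0})}}\bigr)$ spans $W_{L(k\Lambda_{0})}$; the whole content is therefore to check that this image is, up to the zero vector, the set $\mathfrak{B}_{W_{L(k\Lambda_{0})}}$, i.e.\ the identity $f_{k}\bigl(\mathfrak{B}_{W_{N(k\Lambda_{0})}}\bigr)=\mathfrak{B}_{W_{L(k\Lambda_{0})}}$ invoked just above.

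To verify that identity I would argue as follows. The map $f_{k}$ is the restriction to principal subspaces of the canonical $\widehat{\mathfrak{g}}$-module (in fact vertex-algebra) epimorphism $N(k\Lambda_{0})\to N(k\Lambda_{0})/I(k\Lambda_{0})\cong L(k\Lambda_{0})$; in particular it is $\mathcal{L}(\mathfrak{n}_{+})$-equivariant and sends $v_{N(k\Lambda_{0})}$ to $v_{k\Lambda_{0}}$. Because each quasi-particle mode $x_{r\alpha_{i}}(m)=\sum_{m_{1}+\cdots+m_{r}=m}x_{\alpha_{i}}(m_{r})\cdots x_{\alpha_{i}}(m_{1})$ acts on any fixed vector as a \emph{finite} sum of products of operators $x_{\alpha_{i}}(j)\in\mathcal{L}(\mathfrak{n}_{+})$ (summability, cf.\ \cite{LL}), equivariance propagates through all the factors of a quasi-particle monomial and yields $f_{k}(bv_{N(k\Lambda_{0})})=bv_{k\Lambda_{0}}$ for every $b$. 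If the charge-type of $b\in B_{W_{N(k\Lambda_{0})}}$ already satisfies $n_{1,1}\le k$ and $n_{1,2}\le 2k$, then by definition $b\in B_{W_{L(k\Lambda_{0})}}$ and $bv_{k\Lambda_{0}}\in\mathfrak{B}_{W_{L(k\Lambda_{0})}}$.

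It then remains to see that the monomials of $B_{W_{N(k\Lambda_{0})}}$ violating the charge bounds map to $0$. If $n_{1,1}\ge k+1$, then $b$ contains a factor $x_{r\alpha_{1}}(m)$ with $r\ge k+1$; grouping the rightmost $k+1$ operators $x_{\alpha_{1}}(j)$ in the defining sum for $x_{r\alpha_{1}}(m)$ and collecting the inner total degree $a$ one rewrites, as operators on $L(k\Lambda_{0})$, $x_{r\alpha_{1}}(m)=\sum_{a}x_{(r-k-1)\alpha_{1}}(m-a)\,x_{(k+1)\alpha_{1}}(a)$. By the integrability relation $x_{\alpha_{1}}(z)^{k+1}=0$ of \eqref{al:S15}, every mode $x_{(k+1)\alpha_{1}}(a)$ vanishes on $L(k\Lambda_{0})$, hence so does $x_{r\alpha_{1}}(m)$, and therefore $bv_{k\Lambda_{0}}=0$; the case $n_{1,2}\ge 2k+1$ is identical with \eqref{al:S16} in place of \eqref{al:S15}. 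This establishes $f_{k}\bigl(\mathfrak{B}_{W_{N(k\Lambda_{0})}}\bigr)=\mathfrak{B}_{W_{L(k\Lambda_{0})}}$ and completes the argument.

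Once Proposition~\ref{prop:S21} is granted, this is essentially bookkeeping, and I do not expect any serious obstacle here: the only point needing a little care is the reduction in the previous paragraph — that an over-charged quasi-particle annihilates $L(k\Lambda_{0})$ — which rests on summability of the families of modes together with \eqref{al:S15}--\eqref{al:S16}. I would stress that nothing is claimed at this stage about \emph{linear independence} of $\mathfrak{B}_{W_{L(k\Lambda_{0})}}$; that is the genuinely hard part, handled later via the projection and intertwining-operator techniques.
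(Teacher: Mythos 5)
Your argument matches the paper's: Proposition \ref{prop:S22} is obtained exactly by pushing the spanning set of Proposition \ref{prop:S21} forward along the surjection $f_{k}\colon W_{N(k\Lambda_{0})}\to W_{L(k\Lambda_{0})}$ and using the integrability relations \eqref{al:S15}--\eqref{al:S16} to kill the quasi-particles of charge exceeding $k$ (color $1$) or $2k$ (color $2$), which is precisely the content of the paper's assertion $f_{k}\bigl(\mathfrak{B}_{W_{N(k\Lambda_{0})}}\bigr)=\mathfrak{B}_{W_{L(k\Lambda_{0})}}$. Your write-up merely supplies the bookkeeping (equivariance of $f_{k}$, summability, factoring off a charge-$(k+1)$ or $(2k+1)$ quasi-particle) that the paper leaves implicit, so it is correct and essentially the same proof.
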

\begin{flushright}
$\square$
\end{flushright}

\section{Proof of linear independence}

\subsection{Projection \texorpdfstring{$\pi_{\mathfrak{R}}$}{piR}}\label{ss:proj}
Modeled on a projection from \cite{G}, we shall define a projection $\pi_{\mathfrak{R}}$ distributing quasi-particles of monomial vectors  from the set $\mathfrak{B}_{W_{L(k\Lambda_{0})}}$ among factors of the tensor product of $\mathfrak{h}$-weight subspaces of standard  modules $L(\Lambda_{0})$. If we restrict the action of the Cartan subalgebra $\mathfrak{h}=\mathfrak{h} \otimes 1$ to the principal subspace $W_{L(\Lambda_0)}$ of level $1$ standard  modules $L(\Lambda_{0})$, we get the direct sum of vector spaces: 
 \begin{equation*}
 W_{L(\Lambda_{0})}= \bigoplus_{u,v\geq 0} {W_{L(\Lambda_{0})}}_{(u,v)},
\end{equation*}
where 
\begin{equation*}
{W_{L(\Lambda_{0})}}_{(u,v)}={W_{L(\Lambda_{0})}}_{u\alpha_2 + v\alpha_1}
\end{equation*}
is  the weight subspace of weight 
\begin{equation*}
u\alpha_2 + v\alpha_1 \in Q.
\end{equation*}
\par Fix a level $k> 1$.  We shall write the tensor product of $k$ principal subspaces $W_{L(\Lambda_0)}$ of level $1$ as
\begin{equation*}
W_{L(\Lambda_0)}\otimes \cdots\otimes W_{L(\Lambda_0)}.
\end{equation*}
Like in \cite{G}, we will realize the principal subspace $W_{L(k\Lambda_{0})}$ as a subspace of the tensor product of $k$ principal      subspaces $W_{L(\Lambda_0)}$. That is, 
\begin{align*}
W_{L(k\Lambda_{0})}
\subset  W_{L(\Lambda_0)}\otimes \cdots \otimes  W_{L(\Lambda_0)}
\subset  L(\Lambda_{0})^{\otimes k},
\end{align*}
where
\begin{equation*}
v_{k\Lambda_0}=\underbrace{v_{\Lambda_{0}} \otimes \cdots \otimes v_{\Lambda_{0}}}_{k \ \text{factors}}\end{equation*}
is the highest weight vector of weight $k\Lambda_0$.
\par For a chosen dual-charge-type 
\begin{equation*}
\mathfrak{R}=\left( r_{2}^{(1)}, \ldots ,r_{2}^{(2k)}; r_{1}^{(1)}, \ldots , r_{1}^{(k)}\right),
\end{equation*}
denote with $\pi_{\mathfrak{R}}$ the projection of principal subspace $W_{L(k\Lambda_{0})}$ 
to the subspace
\begin{equation*}
 {W_{L(\Lambda_0)}}_{(\mu^{(k)}_{2};r_{1}^{(k)})}\otimes \cdots \otimes  {W_{L(\Lambda_0)}}_{(\mu^{(1)}_{2};r_{1}^{(1)})},
\end{equation*}
where 
\begin{equation*}  
\mu^{(t)}_{2}=r^{(2t)}_{2}+ r^{(2t-1)}_{2},
\end{equation*}
for every $1 \leq  t \leq k$. We shall denote by the same symbol $\pi_{\mathfrak{R}}$ the generalization of this projection to the space of formal series with coefficients in $W_{L(\Lambda_0)} \otimes \cdots \otimes  W_{L(\Lambda_0)}$. Let
\begin{equation}\label{eq:S31}
x_{n_{r_{2}^{(1)},2}\alpha_{2}}(z_{r_{2}^{(1)},2}) \cdots     x_{n_{1,2}\alpha_{2}}(z_{1,2})x_{n_{r_{1}^{(1)},1}\alpha_{1}}(z_{r_{1}^{(1)},1})\cdots  x_{n_{1,1}\alpha_{1}}(z_{1,1})
\end{equation}
be a generating function of the chosen dual-charge-type $\mathfrak{R}$ and the corresponding charge-type $\mathfrak{R}'$. For the color $i=1$, for every $t$, $1\leq t \leq k$, and every $p$, $1 \leq p \leq r_{1}^{(1)}$, we introduce the symbols 
\begin{equation*}
0 \leq  n^{(t)}_{p,1} \leq 1,
\end{equation*}
where 
\begin{equation*}
n^{(1)}_{p,1}\geq n^{(2)}_{p,1}\geq \ldots \geq  n^{(k-1)}_{p,1}\geq  n^{(k)}_{p,1}.
\end{equation*}
and
\begin{equation*} 
n_{p,1}=n^{(1)}_{p,1}+n^{(2)}_{p,1}+ \cdots +n^{(k-1)}_{p,1}+n^{(k)}_{p,1}.
\end{equation*}
Similarly, for color $i=2$ we introduce the symbols
\begin{equation*}
0 \leq  n^{(t)}_{p,2}\leq  2,
\end{equation*}
for every $t$, $1 \leq t \leq k$ and every $p$, $1 \leq p \leq r_{2}^{(1)}$, so that at most one $n^{(t)}_{p,2}$ can be $1$,
\begin{equation*}
n^{(1)}_{p,2}\geq  n^{(2)}_{p,2} \geq  \ldots  \geq  n^{(k-1)}_{p,2} \geq  n^{(k)}_{p,2}
\end{equation*}
and
\begin{equation*}
n_{p,2}=n^{(1)}_{p,2}+n^{(2)}_{p,2}+ \cdots +n^{(k-1)}_{p,2}+n^{(k)}_{p,2}.
\end{equation*}
From relations (\ref{al:S15}), (\ref{al:S16}) and definition of the action of Lie algebra on the modules, follows that the projection of the generating function (\ref{eq:S31}) is
\begin{align}
\label{projekcija}
\pi_{\mathfrak{R}}& x_{n_{r_{2}^{(1)},2}\alpha_{2}}(z_{r_{2}^{(1)},2})\cdots  x_{n_{1,1}\alpha_{1}}(z_{1,1}) \ v_{k\Lambda_{0}}&\\
\nonumber
=&\text{C} \ x_{n^{(k)}_{r^{(2k-1)}_{2},2}\alpha_{2}}(z_{r_{2}^{(2k-1)},2})\cdots                x_{n^{(k)}_{r^{(2k)}_{2},2}\alpha_{2}}(z_{r^{(2k)}_{2},2})\cdots  x_{n^{(k)}_{1,2}\alpha_{2}}(z_{1,2})&\\
\nonumber
& \ \ \ \ \ \ \ \ \ \ \ \ \ \ \ \ \ \ \ \ \ \ \ \ \ \ \ \ \ \ \ \ \ \ \ \  x_{n^{(k)}_{r^{(k)}_{1},1}\alpha_{1}}(z_{r_{1}^{(k)},1})\cdots   x_{n^{(k)}_{1,1}\alpha_{1}}(z_{1,1}) \ v_{\Lambda_{0}}&\\
\nonumber
& \ \ \ \ \ \ \ \ \ \ \ \ \ \ \ \ \otimes \ldots \otimes&\\
\nonumber
\otimes &x_{n_{r^{(1)}_{2},2}^{(1)}\alpha_{2}}(z_{r_{2}^{(1)},2})\cdots  x_{n_{r^{(2)}_{2},2}^{(1)}\alpha_{2}}(z_{r_{2}^{(2)},2})\cdots 
  x_{n_{1,2}^{(1)}\alpha_{2}}(z_{1,2})& \\ 
\nonumber
& \ \ \ \ \ \ \ \ \ \ \ \ \ \ \ \ \ \ \ \ \ \ \ \ \ \ \ \ \ \ \ \ \ \ \ \ x_{n_{r^{(1)}_{1},1}^{(1)}\alpha_{1}}(z_{r_{1}^{(1)},1})\cdots     x_{n{_{1,1}^{(1)}\alpha_{1}}}(z_{1,1}) \ v_{\Lambda_{0}},&
\end{align}
where $\text{C} \in \mathbb{C}^{*}$. In the projection (\ref{projekcija}), $n_{p,1}$ generating functions $x_{\alpha_{1}}(z_{p,1})$ ($1\leq p \leq r^{(1)}_{1}$), whose product generates a quasi-particle of charge $n_{p,1}$, 
 ``are placed at'' the first (from right to left) $n_{p,1}$ tensor factors: 
\begin{equation*}
 x_{n^{(k)}_{p,1}\alpha_{1}}(z_{p,1})\otimes  x_{n^{(k-1)}_{p,1}\alpha_{1}}(z_{p,1}) \otimes \cdots \otimes      x_{n^{(2)}_{p,1}\alpha_{1}}(z_{p,1})\otimes  x_{n^{(1)}_{p,1}\alpha_{1}}(z_{p,1}),
\end{equation*}
so that, in the $t$-tensor row ($1 \leq t \leq k$), we have: 
\begin{equation*}
 \cdots x_{n_{r^{(t)}_{1},1}^{(t)}\alpha_{1}}(z_{r_{1}^{(t)},1})
  \cdots  x_{n{_{1,1}^{(t)}\alpha_{1}}}(z_{1,1})v_{\Lambda_{0}} \otimes \cdots \ \ .
\end{equation*}
Figuratively, this can be shown as in the example in Figure 2, where each box represents $ n_ {p,1}^{(t)}$. 
\begin{figure}[h!tb] 
\centering
\setlength{\unitlength}{5mm}
\begin{picture}(10,10)
\linethickness{0.075mm}
\multiput(0,0)(1,0){1}
{\line(0,1){1}}
\multiput(1,0)(1,0){1}
{\line(0,1){2}}
\multiput(2,0)(1,0){5}
{\line(0,1){3}}
\multiput(2,1)(1,0){1}
{\line(1,0){6}}
\multiput(2,3)(1,0){1}
{\line(1,0){7}}
\multiput(1,0)(1,0){3}
{\line(0,1){2}}
\multiput(1,2)(0,1){1}
{\line(13,0){8}}
\multiput(4,0)(1,0){1}
{\line(0,1){2}}
\multiput(5,0)(1,0){2}
{\line(0,1){2}}
\multiput(8,0)(1,0){1}
{\line(0,1){2}}
\multiput(9,0)(1,0){1}
{\line(0,1){3}}
\multiput(8,5)(1,0){2}
{\line(0,1){3}}
\multiput(7,0)(1,0){1}
{\line(0,1){2}}
\multiput(7,5)(1,0){2}
{\line(0,1){3}}
\multiput(5,5)(1,0){2}
{\line(0,1){3}}
\multiput(5,7)(1,0){2}
{\line(1,0){3}}
\multiput(5,8)(1,0){2}
{\line(1,0){3}}
\multiput(4,7)(1,0){2}
{\line(1,0){3}}
\multiput(4,6)(1,0){2}
{\line(1,0){4}}
\multiput(4,5)(1,0){2}
{\line(1,0){4}}
\multiput(4,5)(1,0){1}
{\line(0,1){2}}
\multiput(0,0)(0,1){2}
{\line(1,0){9}}
\multiput(5,0)(1,0){2}
{\line(0,1){2}}
\multiput(7,0)(1,0){1}
{\line(0,1){3}}
\multiput(8,0)(1,0){2}
{\line(0,1){3}}
\multiput(6,0)(1,0){2}
{\line(0,1){2}}
\put(0,-0.5){\scriptsize{\footnotesize{$n_{r_1^{(1)},1}$}}}
\put(5.1,-0.5){\scriptsize{\footnotesize{$n_{r_1^{(k)},1}$}}}
\put(8.2,-0.5){\scriptsize{\footnotesize{$n_{1,1}$}}}
\put(4.5,3.7){$\textbf{\vdots}$}
\put(5.5,3.7){$\textbf{\vdots}$}
\put(6.5,3.7){$\textbf{\vdots}$}
\put(7.5,3.7){$\textbf{\vdots}$}
\put(8.5,3.7){$\textbf{\vdots}$}
\put(9.3,3.7){$\textbf{\vdots}$}
\put(-0.9,3.7){$\textbf{\vdots}$}
\put(-1.1,0.3){\footnotesize{$r_1^{(1)}$}}
\put(-1.1,1.3){\footnotesize{$r_1^{(2)}$}}
\put(-1.1,2.3){\footnotesize{$r_1^{(3)}$}}
\put(9.3,0.3){\footnotesize{$v_{\Lambda_0}$}}
\put(9.3,1.3){\footnotesize{$v_{\Lambda_0}$}}
\put(9.3,2.3){\footnotesize{$v_{\Lambda_0}$}}
\put(9.3,5.3){\footnotesize{$v_{\Lambda_0}$}}
\put(9.3,6.3){\footnotesize{$v_{\Lambda_0}$}}
\put(9.3,7.3){\footnotesize{$v_{\Lambda_0}$}}
\put(-1.1,5.3){\footnotesize{$r_1^{(k-2)}$}}
\put(-1.1,6.3){\footnotesize{$r_1^{(k-1)}$}}
\put(-1.1,7.3){\footnotesize{$r_1^{(k)}$}}
\end{picture}
\bigskip
\caption{Sketch of projection $\pi_{\mathfrak{R}}$ for color $i=1$}
\end{figure}
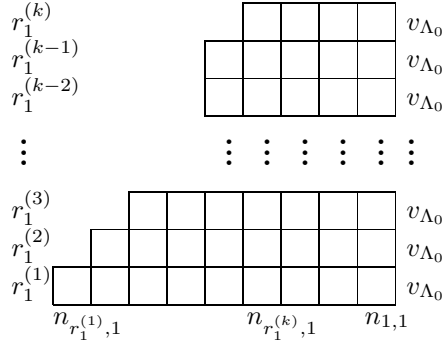
\par From the relation $x_{3\alpha_{2}}(z)=0$ on the principal subspace $W_{L(\Lambda_0)}$, follows that, with the projection   $\pi_{\mathfrak{R}}$ at most, two generating functions of color $i=2$ ``can be placed at'' every tensor factor. 
 If $n_{p,2}$ ($1\leq p \leq r_2^{(1)}$) is an even number, then two generating functions $x_{\alpha_{2}}(z_{p,2})$ ``are placed at'' the  first $\frac{n_{p,2}}{2}$ tensor factors (from right to left). If $n_{p,2}$ is an odd number, then two generating functions  $x_{\alpha_{2}}(z_{p,2})$ ``are placed at'' the first $\frac{n_{p,2}-1}{2}$ tensor factors (from right to left), and the last generating  function $x_{\alpha_{2}}(z_{p,2})$ ``is placed at'' $\frac{n_{p,2}-1}{2}+1$ tensor factor:  
\begin{equation*}
 x_{n^{(k)}_{p,2}\alpha_{1}}(z_{p,2}) \otimes  x_{n^{(k-1)}_{p,2}\alpha_{1}}(z_{p,2}) \otimes  \cdots \otimes   x_{n^{(2)}_{p,2}\alpha_{1}}(z_{p,2})\otimes  x_{n^{(1)}_{p,2}\alpha_{1}}(z_{p,2}),
\end{equation*} 
so that, in every $t$-tensor row ($1 \leq t \leq k$), we have: 
\begin{equation*}
\cdots \otimes x_{n_{r^{(2t-1)}_{2},2}^{(t)}\alpha_{2}}(z_{r_{1}^{(2t-1)},2}) \cdots        x_{n_{r^{(2t)}_{2},2}^{(t)}\alpha_{2}}(z_{r_{2}^{(2t)},2})
\cdots  x_{n{_{1,2}^{(t)}\alpha_{2}}}(z_{1,2})\cdots  v_{\Lambda_{0}}\otimes \cdots \ \ . 
\end{equation*}
As in the previous case, this situation can be shown as in the example in Figure 3.
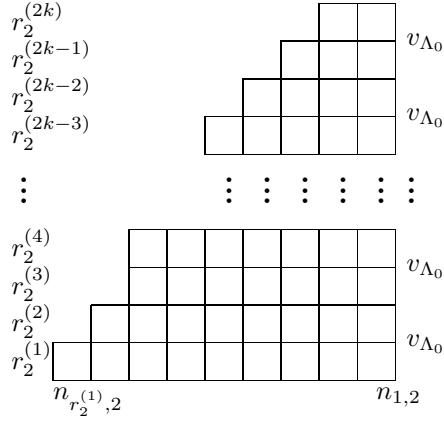
\begin{figure}[h!tb]
\centering
\setlength{\unitlength}{5mm}
\begin{picture}(10,10)
\linethickness{0.075mm}
\multiput(0,0)(1,0){1}
{\line(0,1){1}}
\multiput(1,0)(1,0){1}
{\line(0,1){2}}
\multiput(2,0)(1,0){8}
{\line(0,1){4}}
\multiput(2,1)(1,0){1}
{\line(1,0){6}}
\multiput(2,3)(1,0){1}
{\line(1,0){7}}
\multiput(2,4)(1,0){1}
{\line(1,0){7}}
\multiput(1,0)(1,0){3}
{\line(0,1){2}}
\multiput(1,2)(0,1){1}
{\line(13,0){8}}
\multiput(4,0)(1,0){1}
{\line(0,1){2}}
\multiput(5,0)(1,0){2}
{\line(0,1){2}}
\multiput(8,0)(1,0){1}
{\line(0,1){2}}
\multiput(9,0)(1,0){1}
{\line(0,1){3}}
\multiput(8,6)(1,0){2}
{\line(0,1){3}}
\multiput(7,0)(1,0){1}
{\line(0,1){2}}
\multiput(7,6)(1,0){2}
{\line(0,1){3}}
\multiput(5,6)(1,0){2}
{\line(0,1){2}}
\multiput(5,7)(1,0){2}
{\line(1,0){3}}
\multiput(5,8)(1,0){2}
{\line(1,0){3}}
\multiput(6,9)(1,0){2}
{\line(1,0){2}}
\multiput(6,8)(1,0){1}
{\line(0,1){1}}
\multiput(7,8)(1,0){3}
{\line(0,1){2}}
\multiput(7,10)(1,0){2}
{\line(1,0){1}}
\multiput(5,8)(1,0){2}
{\line(1,0){2}}
\multiput(4,7)(1,0){2}
{\line(1,0){4}}
\multiput(4,6)(1,0){2}
{\line(1,0){4}}
\multiput(4,6)(1,0){1}
{\line(0,1){1}}
\multiput(0,0)(0,1){2}
{\line(1,0){9}}
\multiput(5,0)(1,0){2}
{\line(0,1){2}}
\multiput(7,0)(1,0){1}
{\line(0,1){3}}
\multiput(8,0)(1,0){2}
{\line(0,1){3}}
\multiput(6,0)(1,0){2}
{\line(0,1){2}}
\put(0,-0.5){\footnotesize{$n_{r_2^{(1)},2}$}}
\put(8.5,-0.5){\scriptsize{\footnotesize{$n_{1,2}$}}}
\put(4.5,4.7){$\textbf{\vdots}$}
\put(5.5,4.7){$\textbf{\vdots}$}
\put(6.5,4.7){$\textbf{\vdots}$}
\put(7.5,4.7){$\textbf{\vdots}$}
\put(8.5,4.7){$\textbf{\vdots}$}
\put(9.3,4.7){$\textbf{\vdots}$}
\put(-0.9,4.7){$\textbf{\vdots}$}
\put(-1.1,0.3){\footnotesize{$r_2^{(1)}$}}
\put(-1.1,1.3){\footnotesize{$r_2^{(2)}$}}
\put(-1.1,2.3){\footnotesize{$r_2^{(3)}$}}
\put(-1.1,3.3){\footnotesize{$r_2^{(4)}$}}
\put(9.3,0.9){\footnotesize{$v_{\Lambda_0}$}}
\put(9.3,2.9){\footnotesize{$v_{\Lambda_0}$}}
\put(9.3,6.9){\footnotesize{$v_{\Lambda_0}$}}
\put(9.3,8.9){\footnotesize{$v_{\Lambda_0}$}}
\put(-1.1,6.3){\footnotesize{$r_2^{(2k-3)}$}}
\put(-1.1,7.3){\footnotesize{$r_2^{(2k-2)}$}}
\put(-1.1,8.3){\footnotesize{$r_2^{(2k-1)}$}}
\put(-1.1,9.3){\footnotesize{$r_2^{(2k)}$}}
\end{picture}
\bigskip
\caption{Sketch of projection $\pi_{\mathfrak{R}}$ for color $i=2$}
\end{figure}
\par Let $b\in B_{W_{L(k\Lambda_0)}}$ be a monomial 
\begin{equation}\label{eq:pol}
b=x_{n_{r_{2}^{(1)},2}\alpha_{2}}(m_{r_{2}^{(1)},2})\cdots      x_{n_{1,2}\alpha_{2}}(m_{1,2})x_{n_{r_{1}^{(1)},1}\alpha_{1}}(m_{r_{1}^{(1)},1})\cdots  x_{n_{1,1}\alpha_{1}}(m_{1,1})
\end{equation}
colored with color-type $(r_{2},r_{1}),$ charge-type $\mathfrak{R}'$ and dual-charge-type $\mathfrak{R}$. It follows from the above considerations that the projection of monomial vector $bv_{k\Lambda_{0}}$ is a coefficient of the projection of the generating function  (\ref{projekcija}) which we denote as $$\pi_{\mathfrak{R}}bv_{k\Lambda_{0}}.$$ 
 The following lemma will be used in the proof of the main theorem of this section. The proof of lemma follows from the definition of the  projection and relations (\ref{al:S15}) and (\ref{al:S16}) for level one modules:
\begin{lem}\label{S62L}
Let $b, \bar{b}\in B_{W_{L(k\Lambda_0)}}$ be monomials such that monomial $b$ is charge-type $\mathfrak{R}'$ and dual-charge-type $\mathfrak{R}$ and monomial $\bar{b}$ is charge-type $$(\bar{n}_{{\bar{r}}_{2}^{(1)},2}, \ldots ,  \bar{n}_{1,2};\bar{n}_{{\bar{r}}_{1}^{(1)},1}, \ldots , \bar{n}_{1,1})$$ and corresponding dual-charge-type 
$\bar{\mathfrak{R}}=\left( {\bar{r}}_{2}^{(1)}, \ldots ,{\bar{r}}_{2}^{(2k)}; {\bar{r}}_{1}^{(1)}, \ldots , {\bar{r}}_{1}^{(k)}\right),$ so that 
$b<\bar{b}.$
 Then we have $\pi_{\mathfrak{R}}\bar{b}v_{k\Lambda_{0}}=0$.
\end{lem}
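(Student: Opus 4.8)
The plan is to read off the action of $\pi_{\mathfrak{R}}$ from formula (\ref{projekcija}) and then to show that the quasi-particles constituting $\bar{b}$ cannot be distributed over the $k$ level-one tensor factors so as to land in the weight profile that $\pi_{\mathfrak{R}}$ records. If $b$ and $\bar{b}$ have different color-types the claim is immediate, since then $\bar{b}v_{k\Lambda_0}$ lies in a $Q$-weight subspace of $L(\Lambda_0)^{\otimes k}$ annihilated by $\pi_{\mathfrak{R}}$; so I would assume $b$ and $\bar{b}$ have the same color-type, and (as in the statement) charge-types $\mathfrak{R}'\neq(\bar{n}_{\bar{r}_2^{(1)},2},\ldots,\bar{n}_{1,1})$, hence $\mathfrak{R}\neq\bar{\mathfrak{R}}$.

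First I would recall that, by (\ref{projekcija}), $\pi_{\mathfrak{R}}$ is the coefficientwise projection onto
\begin{equation*}
{W_{L(\Lambda_0)}}_{(\mu_2^{(k)};r_1^{(k)})}\otimes\cdots\otimes {W_{L(\Lambda_0)}}_{(\mu_2^{(1)};r_1^{(1)})},
\end{equation*}
and that, writing $x_{\alpha_i}(z)$ acting on $L(\Lambda_0)^{\otimes k}$ as the sum of its $k$ slotwise components and invoking the level-one relations $x_{\alpha_1}(z)^{2}=0$ and $x_{\alpha_2}(z)^{3}=0$, a single quasi-particle of color $1$ and charge $n$ deposits at most one unit of $\alpha_1$-weight in any given tensor factor, while a single quasi-particle of color $2$ and charge $n$ deposits at most two units of $\alpha_2$-weight in any given tensor factor. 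Consequently $\pi_{\mathfrak{R}}(\bar{b}v_{k\Lambda_0})$ is a sum, taken over all distributions of the quasi-particles of $\bar{b}$ among the $k$ factors that respect these per-quasi-particle, per-factor bounds, of those terms for which the $t$-th factor carries $\mathfrak{h}$-weight exactly $\mu_2^{(t)}\alpha_2+r_1^{(t)}\alpha_1$.

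Next I would isolate the first color $i$ (starting with $i=1$, as in (\ref{eq:S23})) in which the charge-types of $b$ and $\bar{b}$ differ. On the other color the charge-types agree, so the relevant quasi-particles of $\bar{b}$ deposit the same total weight in each factor as those of $b$, and it suffices to rule out realizing the prescribed color-$i$ weights $r_i^{(1)}\geq\cdots\geq r_i^{(k)}$ with the color-$i$ quasi-particles of $\bar{b}$. Passing to conjugate partitions, the amount of color-$i$ weight that $\mathfrak{R}$ forces into the first $t$ tensor factors equals $\sum_{p}\min\{n_{p,i},q_i t\}$, with $q_1=1$ and $q_2=2$, whereas any legal distribution can place there at most $\sum_{p}\min\{\bar{n}_{p,i},q_i t\}$; and the hypothesis $b<\bar{b}$, read on color $i$ as the lexicographic comparison of the two charge sequences listed in decreasing order, forces $\sum_{p}\min\{n_{p,i},q_i t\}>\sum_{p}\min\{\bar{n}_{p,i},q_i t\}$ for some $t$. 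As demand then exceeds capacity, every term in the above sum vanishes, so $\pi_{\mathfrak{R}}\bar{b}v_{k\Lambda_0}=0$. It is natural to phrase this last step through the Young-diagram pictures of Figures~2 and~3: $\pi_{\mathfrak{R}}$ prescribes the column heights $(r_i^{(t)})_t$, and the column heights produced by $\bar{b}$, assembled from rows of widths $\bar{n}_{p,i}$ each occupying consecutive factors, can no longer build that diagram once $b<\bar{b}$.

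The step I expect to be the main obstacle is precisely this combinatorial translation, especially the color-$2$ case, where the per-factor bound is $2$ and, by the definition of $\pi_{\mathfrak{R}}$, a quasi-particle of odd charge leaves its single surplus copy in one designated factor; there one must verify carefully that the comparison (\ref{eq:S23}) of the color-$2$ charge sequences does yield the required violated partial-sum inequality, and keep track of its interaction with the color-$1$ contributions to each factor. Reading off $\pi_{\mathfrak{R}}$ from (\ref{projekcija}) and from (\ref{al:S15})--(\ref{al:S16}), and the reduction to a single color, should be routine.
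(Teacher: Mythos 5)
Your reduction of the lemma to a per-color demand--capacity count is the natural formalization of the paper's one-line justification (the paper gives no actual proof beyond saying the statement follows from the definition of $\pi_{\mathfrak{R}}$ and the level-one relations (\ref{al:S15})--(\ref{al:S16})), and for two of the three situations it is complete and correct: if the color-types differ the weights do not match, and if the first discrepancy is in color $1$ then $b<\bar{b}$ implies that the partition of color-$1$ charges of $b$ does not dominate that of $\bar{b}$, hence $\sum_p\min\{n_{p,1},t\}>\sum_p\min\{\bar{n}_{p,1},t\}$ for some $t\le k$, and every distribution violates the exact slot demands. The genuine gap is exactly the step you postponed, the color-$2$ case. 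Because $\pi_{\mathfrak{R}}$ only records the \emph{paired} column sums $\mu_2^{(t)}=r_2^{(2t-1)}+r_2^{(2t)}$, your inequality is available only at even cut-offs $c=2t$, and lexicographic comparison of the color-$2$ charges does \emph{not} force it there: for the charge sequences $(1,1)$ (for $b$) versus $(2)$ (for $\bar{b}$), or $(3,3)$ versus $(4,2)$, one has $\sum_p\min\{n_{p,2},2t\}=\sum_p\min\{\bar{n}_{p,2},2t\}$ for every $t$, so demand never exceeds capacity and your argument cannot conclude.

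Moreover, this is not a defect you can repair by a more careful bookkeeping of the odd "surplus" copy: in these cases the conclusion of the lemma fails under the stated reading of the order (\ref{eq:S23}). Take $k=1$, $b=x_{\alpha_2}(-3)x_{\alpha_2}(-1)$ and $\bar{b}=x_{2\alpha_2}(-2)$; both lie in $B_{W_{L(\Lambda_0)}}$, they have the same color-type $(2,0)$, and $b<\bar{b}$ since $n_{1,2}=1<2=\bar{n}_{1,2}$. Here $\mathfrak{R}$ gives $\mu_2^{(1)}=r_2^{(1)}+r_2^{(2)}=2$ and $r_1^{(1)}=0$, so $\pi_{\mathfrak{R}}$ is the projection onto ${W_{L(\Lambda_0)}}_{(2;0)}$, while $\bar{b}v_{\Lambda_0}=x_{\alpha_2}(-1)^2v_{\Lambda_0}\neq 0$ (the level-one relation for the short root is $x_{\alpha_2}(z)^{3}=0$, not $x_{\alpha_2}(z)^{2}=0$) and this vector lies precisely in the weight subspace $(2;0)$; hence $\pi_{\mathfrak{R}}\bar{b}v_{\Lambda_0}\neq 0$. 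So what your method actually proves is annihilation when the color-types differ or when the charge-types first differ in color $1$; separating color-$2$ charge-types is beyond the resolution of $\pi_{\mathfrak{R}}$ and needs the finer projection $\pi'_{\mathfrak{R}}$ onto $2k$ level-one $A_1^{(1)}$-factors used at the end of Subsection \ref{ss:proj} (equivalently, a stronger hypothesis or a reformulation of the lemma). Since the paper suppresses the proof, there is nothing concrete to compare against, but as written your route cannot be completed for color $2$, and you should flag the problem with the statement itself rather than attempt to verify the partial-sum inequality there.
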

\begin{flushright}
$\square$
\end{flushright}
\par We conduct the proof of linear independence of the set $\mathfrak{B}_{W_{L(k\Lambda_{0})}}$ in a few steps. In the final step, we will use the linear independence of the set that spans the principal subspace of standard module of affine Lie algebra of type $\widetilde{sl}_2(\alpha_2)$. Therefore, we especially analyze the projection of monomial vectors $b(\alpha_{2})v_{k\Lambda_0}$ from the set $\mathfrak{B}_{W_{L(k\Lambda_{0})}}$.  Denote by
\begin{equation*}
\mathfrak{A} \subset \mathfrak{B}_{W_{L(k\Lambda_{0})}}
\end{equation*}
the set of monomial vectors $bv_{k\Lambda_0}$, where every $b$ is a monomial colored with the color $i=2$. Let 
\begin{equation*}
b(\alpha_{2})v_{k\Lambda_0}
\end{equation*} be an element of the set $\mathfrak{A}$ such that $b(\alpha_{2})$: 
\begin{equation*}
b(\alpha_{2})=x_{n_{r_{2}^{(1)},2}\alpha_{2}}(m_{r_{2}^{(1)},2})\cdots  x_{n_{1,2}\alpha_{2}}(m_{1,2}), 
\end{equation*} 
is of the charge-type
\begin{equation*} 
\left(n_{r_{2}^{(1)},2}, \ldots, n_{1,2}\right) ,
\end{equation*}
\begin{equation*} 
n_{r_{2}^{(1)},2}\leq \ldots \leq n_{1,2}\leq 2k 
\end{equation*}
and of the dual-charge-type 
\begin{equation*}
\mathfrak{R}=\left(r_{2}^{(1)}, \ldots, r_{2}^{(2k)}\right) ,
\end{equation*}
\begin{equation*}
r_{2}^{(1)}\geq \ldots \geq r_{2}^{(2k)},
\end{equation*}
with the satisfied requirements for the $m_{p,2}$ ($1\leq p\leq r_{2}^{(1)}$):
\begin{align*}
m_{1,2}\leq & -n_{1,2}&\\
m_{p,2}\leq & -n_{p,2}- \sum_{p>p'>0}\ 2 \ \text{min}\{n_{p,2}, n_{p',2}\} \ \ \ \text{for} \ \ 2 \leq  p \leq  r_{2}^{(1)}&\\
m_{p+1,2}\leq & m_{p,2}-2n_{p,2}, \ \ \ \text{if} \ \ \ n_{p+1,2}=n_{p,2}, \ \ \  1\leq p \leq r_{2}^{(1)}-1.&
\end{align*}
Denote by
\begin{equation*}
x_{n_{r_{2}^{(1)},2}\alpha_{2}}(z_{r_{2}^{(1)},2})\cdots  x_{n_{1,2}\alpha_{2}}(z_{1,2})
\end{equation*}
the generating function of the same charge-type and dual-charge-type as the monomial $b(\alpha_{2})$. Under consideration at the beginning  of this section, the default dual-charge-type $\mathfrak{R}$ determines the projection  $\pi_{\mathfrak{R}}$ on the vector space 
\begin{equation*}
{W_{L(\Lambda_{0})}}_{(\mu^{(k)}_{2};0)}\otimes \cdots \otimes {W_{L(\Lambda_{0})}}_{(\mu^{(1)}_{2};0)} \subset W_{L(\Lambda_{0})}\otimes \cdots \otimes  W_{L(\Lambda_{0})}.
\end{equation*}
\par Since the restriction of $B^{(1)}_{2}$-module $L(\Lambda_{0})$ on the subalgebra $\widetilde{sl}_{2}(\alpha_{2})$ is a direct sum of  standard  $\widetilde{sl}_{2}(\alpha_{2})$-modules of level $2$,  we will introduce the following symbol $L^B(\Lambda_{0})$ for standard   $B_2^{(1)}$-module with the highest weight vector $v^B_{\Lambda_0}$ and the mark $L^A(2\Lambda_{0})\subset L^B(\Lambda_{0})$ for standard   $A_1^{(1)}$-module of level $2$ with the highest weight vector $v^A_{2\Lambda_0}=v^B_{\Lambda_0}$. From the condition
 $x_{3\alpha_2}(z)=0$, it follows that $\pi_{\mathfrak{R}}bv_{k\Lambda_{0}}$ is an element of vector space 
\begin{equation*}
\underbrace{W_{L^A(2\Lambda_{0})}\otimes  \ldots  \otimes  W_{L^A(2\Lambda_{0})}}_{k \ \text{factors}},
\end{equation*}
where $W_{L^A(2\Lambda_{0})}={W_{L^B(\Lambda_{0})}}_{0 \cdot  \alpha_1}$ is the principal subspace of standard module $L^A(2\Lambda_{0})$   of the affine Lie algebra $\widetilde{sl}_{2}(\alpha_{2})$. Denote by 
\begin{align*}
{W_{L^A\left(2\Lambda_{0}\right)}}_{(u)}= {W_{L^B(\Lambda_{0})}}_{u\alpha_2},
\end{align*}
$\mathfrak{h}$-weighted subspace of $W_{L^A(2\Lambda_{0})}$.   
\begin{figure}
\centering
\setlength{\unitlength}{5mm}
\begin{picture}(10,10)
\linethickness{0.075mm}
\multiput(0,0)(1,0){1}
{\line(0,1){1}}
\multiput(1,0)(1,0){1}
{\line(0,1){2}}
\multiput(2,0)(1,0){8}
{\line(0,1){4}}
\multiput(2,1)(1,0){1}
{\line(1,0){6}}
\multiput(2,3)(1,0){1}
{\line(1,0){7}}
\multiput(2,4)(1,0){1}
{\line(1,0){7}}
\multiput(1,0)(1,0){3}
{\line(0,1){2}}
\multiput(1,2)(0,1){1}
{\line(13,0){8}}
\multiput(4,0)(1,0){1}
{\line(0,1){2}}
\multiput(5,0)(1,0){2}
{\line(0,1){2}}
\multiput(8,0)(1,0){1}
{\line(0,1){2}}
\multiput(9,0)(1,0){1}
{\line(0,1){3}}
\multiput(8,6)(1,0){2}
{\line(0,1){3}}
\multiput(7,0)(1,0){1}
{\line(0,1){2}}
\multiput(7,6)(1,0){2}
{\line(0,1){3}}
\multiput(5,6)(1,0){2}
{\line(0,1){2}}
\multiput(5,7)(1,0){2}
{\line(1,0){3}}
\multiput(5,8)(1,0){2}
{\line(1,0){3}}
\multiput(6,9)(1,0){2}
{\line(1,0){2}}
\multiput(6,8)(1,0){1}
{\line(0,1){1}}
\multiput(7,8)(1,0){3}
{\line(0,1){2}}
\multiput(7,10)(1,0){2}
{\line(1,0){1}}
\multiput(5,8)(1,0){2}
{\line(1,0){2}}
\multiput(4,7)(1,0){2}
{\line(1,0){4}}
\multiput(4,6)(1,0){2}
{\line(1,0){4}}
\multiput(4,6)(1,0){1}
{\line(0,1){1}}
\multiput(0,0)(0,1){2}
{\line(1,0){9}}
\multiput(5,0)(1,0){2}
{\line(0,1){2}}
\multiput(7,0)(1,0){1}
{\line(0,1){3}}
\multiput(8,0)(1,0){2}
{\line(0,1){3}}
\multiput(6,0)(1,0){2}
{\line(0,1){2}}
\put(0,-0.5){\footnotesize{$n_{r_2^{(1)},2}$}}
\put(8.2,-0.5){\scriptsize{\footnotesize{$n_{1,2}$}}}
\put(4.5,4.7){$\textbf{\vdots}$}
\put(5.5,4.7){$\textbf{\vdots}$}
\put(6.5,4.7){$\textbf{\vdots}$}
\put(7.5,4.7){$\textbf{\vdots}$}
\put(8.5,4.7){$\textbf{\vdots}$}
\put(9.3,4.7){$\textbf{\vdots}$}
\put(-0.9,4.7){$\textbf{\vdots}$}
\put(-0.9,0.9){\footnotesize{$\mu^{(1)}$}}
\put(-0.9,2.9){\footnotesize{$\mu^{(2)}$}}
\put(9.3,0.9){\footnotesize{$v^A_{2\Lambda_0}$}}
\put(9.3,2.9){\footnotesize{$v^A_{2\Lambda_0}$}}
\put(9.3,6.9){\footnotesize{$v^A_{2\Lambda_0}$}}
\put(9.3,8.9){\footnotesize{$v^A_{2\Lambda_0}$}}
\put(-0.8,6.9){\footnotesize{$\mu^{(k-1)}$}}
\put(-0.9,8.9){\footnotesize{$\mu^{(k)}$}}
\end{picture}
\bigskip
\caption{Sketch of projection of $b(\alpha_2)v_{k\Lambda_0}$}
\end{figure}
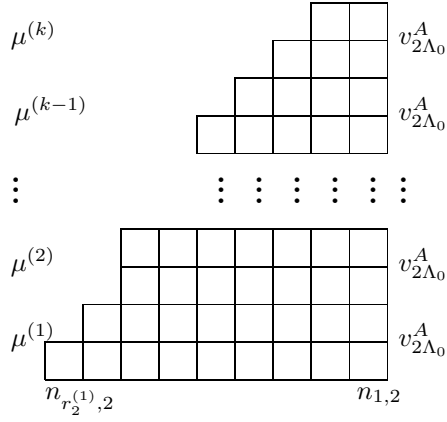
 On every factor in the tensor product $W_{L^A(2\Lambda_{0})}\otimes \cdots \otimes W_{L^A(2\Lambda_{0})}$ of $k$ principal subspaces       $W_{L^A(2\Lambda_{0})}$, we have embedding 
\begin{equation*}
W_{L^A(2\Lambda_{0})} \hookrightarrow W_{L^A(\Lambda_{0})} \otimes  W_{L^A(\Lambda_{0})},
\end{equation*}
that is  
\begin{equation*}
{W_{L^A(2\Lambda_{0})}}_{(\mu^{(p)}_{2})}\hookrightarrow \sum_{\stackrel{u,v\in 
\mathbb{N}}{\mu^{(p)}_{2}=u+v}}{W_{L^A(\Lambda_{0})}}_{(u)}\otimes  {W_{L^A(\Lambda_{0})}}_{(v)},
\end{equation*}
 for $1\leq p \leq k$. 
\par Denote by $\pi'_{\mathfrak{R}}$ the projection of the vector space
\begin{equation*}
W_{L^A(2\Lambda_{0})}\otimes  \cdots  \otimes  W_{L^A(2\Lambda_{0})}\end{equation*}
 on subspace
\begin{equation*}
{W_{L^A(\Lambda_{0})}}_{(r_{2}^{(2k)})}\otimes  {W_{L^A(\Lambda_{0})}}_{(r_{2}^{(2k-1)})}\otimes  \cdots  \otimes       {W_{L^A(\Lambda_{0})}}_{(r_{2}^{(2)})}\otimes  {W_{L^A(\Lambda_{0})}}_{(r_{2}^{(1)})}.
\end{equation*}
In particular, extending the above projection on the space of formal series with coefficients in
\begin{equation*}
\underbrace{W_{L^A(\Lambda_{0})}\otimes \cdots \otimes  W_{L^A(\Lambda_{0})}}_{2k \ \text{factors}}\end{equation*}
 from the condition $x_{\alpha_2}(z)=0$ follows
\begin{align*}
&\pi'_{\mathfrak{R}}\left(\pi_{\mathfrak{R}}b(\alpha_2)v_{k\Lambda_{0}}\right)&\\
 \in & {W_{L^A(\Lambda_{0})}}_{(r_{2}^{(2k)})}\otimes {W_{L^A(\Lambda_{0})}}_{(r_{2}^{(2k-1)})}\otimes \cdots \otimes   {W_{L^A(\Lambda_{0})}}_{(r_{2}^{(2)})}\otimes {W_{L^A(\Lambda_{0})}}_{(r_{2}^{(1)})}.&
\end{align*}
Georgiev showed that 
\begin{equation*}
\pi'_{\mathfrak{R}}\circ {\pi_{\mathfrak{R}}}_{\left|_{W_{L^A(2k\Lambda_{0})}}\right.}\mathfrak{A}_{\mathfrak{R}}\end{equation*} is a  linearly independent set. Thus, it follows that the set $\pi_{\mathfrak{R}}\mathfrak{A}_{\mathfrak{R}}$ is a linearly independent set.

\subsection{A coefficient of an intertwining operator}\label{ss:intert}
Here we introduce operators which we use in our proof of linear independence of the set $\mathfrak{B}_{W_{L(k\Lambda_{0})}}$. 
\par Denote by $Y (\cdot, z)$ the vertex operator which determines the structure of $L(\Lambda_0)$-module $L(\Lambda_1)$. From the definition, of the intertwining operator it follows that $Y$ is the intertwining operator of type 
\begin{equation*} 
\binom{L(\Lambda_1)}{L(\Lambda_0) \ L(\Lambda_1)},
\end{equation*}
and $I(\cdot , z)$ defined by  
\begin{align}\label{al:S33}
I(w,z)v=\exp(zL(-1))Y(v,-z)w, \ \  w \in L(\Lambda_1), v \in L(\Lambda_0)
\end{align}
is an intertwining operator of type
\begin{equation*} 
\binom{L(\Lambda_1)}{L(\Lambda_1) \ L(\Lambda_0)},
\end{equation*}
(cf. \cite{FHL}). 
In the next lemma we use the following commutator formula 
\begin{equation*} 
\left[x(m),I(v_{\Lambda_1},z)\right]=\sum_{j\geq 0} \binom {m}{j}z^{m-j}I(x(j)v_{\Lambda_1},z)
\end{equation*}
(cf. formula (2.13) of \cite{Li2}), where $x(m) \in \widehat{\mathfrak{g}}$.  
More precisely:
\begin{lem}\label{lem:S321}
For fixed $m \in \mathbb{Z}$, we have:
\begin{equation*}
\left[x_{\alpha_i}(m), I(v_{\Lambda_1},z)\right]=0.
\end{equation*}
\begin{flushright}
\hfill
\end{flushright}
\end{lem}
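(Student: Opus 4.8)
The plan is to apply the commutator formula displayed just before the lemma, namely
\begin{equation*}
\left[x(m),I(v_{\Lambda_1},z)\right]=\sum_{j\geq 0} \binom{m}{j}z^{m-j}I(x(j)v_{\Lambda_1},z),
\end{equation*}
with $x=x_{\alpha_i}$ for $i=1,2$, and then to show that every term on the right-hand side vanishes. For this it suffices to prove that $x_{\alpha_i}(j)v_{\Lambda_1}=0$ for all $j\geq 0$. The summand with $j=0$ is $x_{\alpha_i}(0)v_{\Lambda_1}=(x_{\alpha_i}\otimes 1)v_{\Lambda_1}$, which is handled by a weight argument, while the summands with $j\geq 1$ vanish because $v_{\Lambda_1}$ is a highest weight vector annihilated by $\widehat{\mathfrak{g}}_{>0}$ — indeed $(\mathfrak{g}\otimes t^{j})v_{\Lambda_1}=0$ for $j\geq 1$ on the standard module $L(\Lambda_1)$.

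For the remaining term $x_{\alpha_i}(0)v_{\Lambda_1}$, I would use the structure of $L(\Lambda_1)$ as a $\widetilde{\mathfrak{g}}$-module. The vector $x_{\alpha_i}(0)v_{\Lambda_1}$ has $\widetilde{\mathfrak h}$-weight $\Lambda_1+\alpha_i$. I would argue that $\Lambda_1+\alpha_i$ is not a weight of $L(\Lambda_1)$: since $v_{\Lambda_1}$ is a highest weight vector, every weight of $L(\Lambda_1)$ is of the form $\Lambda_1-\sum_j c_j\alpha_j$ with $c_j\in\mathbb Z_{\geq 0}$ (sum over the simple roots $\alpha_0,\alpha_1,\alpha_2$ of $\widetilde{\mathfrak g}$), and $\Lambda_1+\alpha_i$ is visibly not of this form (its $\alpha_i$-coefficient has the wrong sign, and one cannot compensate using $\alpha_0=\delta-\theta$ because that would force a positive $\delta$-coefficient and hence a positive eigenvalue of $d$, whereas $d$ acts by $0$ on all of $\widehat{\mathfrak g}_{\geq 0}v_{\Lambda_1}$ in the relevant grading; more directly, pairing against $d$ or $c$ already shows $\Lambda_1+\alpha_i$ cannot appear). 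Hence the weight space of $L(\Lambda_1)$ of weight $\Lambda_1+\alpha_i$ is zero, so $x_{\alpha_i}(0)v_{\Lambda_1}=0$.

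Combining these observations, every term in the commutator formula is $I$ applied to $0$, hence $0$, so $[x_{\alpha_i}(m),I(v_{\Lambda_1},z)]=0$ for every $m\in\mathbb Z$ and $i=1,2$, which is the assertion of the lemma.

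The only subtle point, and the one I would be most careful about, is the vanishing of $x_{\alpha_i}(0)v_{\Lambda_1}$: one must check that $\Lambda_1+\alpha_i$ genuinely fails to be a weight of the integrable highest weight module $L(\Lambda_1)$. The cleanest way is to invoke that all weights lie in $\Lambda_1-\sum_{j=0}^{2}\mathbb Z_{\geq 0}\alpha_j$, together with the linear independence of $\{\alpha_0,\alpha_1,\alpha_2\}$ in $\widetilde{\mathfrak h}^*$ (equivalently, comparing the $\delta$- and $\mathfrak h$-components): writing $\alpha_i=-\sum_j c_j\alpha_j$ with $c_j\ge 0$ is impossible since the coefficient of $\alpha_i$ on the left is $+1$ and adding multiples of $\alpha_0=\delta-\theta$ cannot fix this without introducing a $\delta$-term that is absent on the left. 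Everything else is a direct application of the quoted commutator formula and the highest-weight annihilation property.
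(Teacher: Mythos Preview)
Your proof is correct and follows the same approach as the paper: the paper simply records the commutator formula $\left[x(m),I(v_{\Lambda_1},z)\right]=\sum_{j\geq 0} \binom{m}{j}z^{m-j}I(x(j)v_{\Lambda_1},z)$ and states the lemma as its immediate consequence, without writing out any further argument. Your verification that $x_{\alpha_i}(j)v_{\Lambda_1}=0$ for all $j\geq 0$ is exactly the missing detail; note, however, that the $j=0$ case can be handled more directly than by your weight-space argument, since $x_{\alpha_i}(0)=x_{\alpha_i}\otimes 1$ lies in $\mathfrak{n}_+\subset\widehat{\mathfrak{n}}_+$ and therefore annihilates the highest weight vector $v_{\Lambda_1}$ by definition.
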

\par We define the following coefficient of intertwining operator 
\begin{equation*} 
A_{\omega_1}=\text{Res}_z \ z^{-1}I(v_{\Lambda_1}, z).
\end{equation*}
From definition (\ref{al:S33}), we have
\begin{equation}\label{eq:S34}
A_{\omega_1}v_{\Lambda_0}=v_{\Lambda_1}.
\end{equation}
Set
\begin{equation}\label{eq:S35}
1\otimes\cdots \otimes  A_{\omega_{1}} \otimes \underbrace{1 \otimes \cdots \otimes 1}_{s-1 \ \text{factors}},
\end{equation}
where $s\leq k$. 
Now, if we act with the operator (\ref{eq:S35}) on the vector $v_{k\Lambda_0}$ from (\ref{eq:S34}), it follows:
\begin{equation}\label{eq:S36}
\left(1\otimes\cdots \otimes 1\otimes A_{\omega_{1}} \otimes 1 \otimes \cdots \otimes 1\right)(v_{k\Lambda_0})=v_{\Lambda_{0}}\otimes
\cdots \otimes v_{\Lambda_{0}}\otimes v_{\Lambda_{1}}\otimes \underbrace{v_{\Lambda_{0}}\otimes \cdots\otimes v_{\Lambda_{0}}}_{s-1 \ \text{factors}}.
\end{equation}
\par Set $b\in B_{W_{L(k\Lambda_0)}}$ as in (\ref{eq:pol}). From the consideration in section \ref{ss:proj}, it follows that \begin{equation*}(1\otimes \cdots  \otimes 1 \otimes  A_{\omega_{1}} \otimes 1 \otimes \cdots \otimes 1)\pi_{\mathfrak{R}}bv_{k\Lambda_0}
\end{equation*}
is the coefficient of  
\begin{equation*}
(1\otimes\cdots \otimes A_{\omega_{1}} \otimes 1 \otimes \cdots \otimes  1)\pi_{\textsl{\emph{R}}}x_{n_{r_{2}^{(1)},2}\alpha_{2}}(z_{r_{2}^{(1)},2})\cdots x_{s\alpha_{1}}(z_{1,1})v_{k\Lambda_{0}}.
\end{equation*}
From (\ref{eq:S36}), it follows that operator $A_{\omega_{1}}$ acts only on the $s$-th tensor row: 
\begin{equation*}\otimes  x_{n^{(s)}_{r^{(2s-1)}_{2},2}\alpha_{2}}(z_{r_{2}^{(2s-1)},2})\cdots   x_{n^{(s)}_{r^{(2s)}_{2},2}\alpha_{2}}(z_{r^{(2s)}_{2},2})\cdots  x_{n^{(s)}_{1,2}\alpha_{2}}(z_{1,2})\end{equation*}
\begin{equation*}x_{n^{(s)}_{r^{(s)}_{1},1}\alpha_{1}}(z_{r_{1}^{(s)},1})\cdots  x_{\alpha_{1}}(z_{1,1})v_{\Lambda_{0}}\otimes,\end{equation*}
where $ 0 \leq  n^{(s)}_{p,1} \leq 1$, for $1\leq  p \leq  r^{(s)}_{1}$ and $ 0 \leq  n^{(s)}_{p,2} \leq 2$, for $1\leq p \leq    r^{(2s-1)}_{2}$ (see (\ref{projekcija})). Since $A_{\omega_{1}}$ commutes with the generating functions, in the $s$-th tensor row, we have
\begin{equation*} \otimes x_{n^{(s)}_{r^{(2s-1)}_{2},2}\alpha_{2}}(z_{r_{2}^{(2s-1)},2})\cdots     x_{n^{(s)}_{r^{(2s)}_{2},2}\alpha_{2}}(z_{r^{(2s)}_{2},2})\cdots  x_{n^{(s)}_{1,2}\alpha_{2}}(z_{1,2})\end{equation*}
\begin{equation}\label{Tanja}x_{n^{(s)}_{r^{(s)}_{1},1}\alpha_{1}}(z_{r_{1}^{(s)},1})\cdots  x_{\alpha_{1}}(z_{1,1})v_{\Lambda_{1}}\otimes.
 \end{equation}

\subsection{Simple current operator \texorpdfstring{$e_{\omega_{1}}$}{eomega1}}\label{ss:curr}
Fix $\omega_1 \in \mathfrak{h}$ as in section \ref{ss:modules}. Here we introduce the simple current operator $e_{\omega_{1}}$ (cf. \cite{P}). We use this map  in the proof of linear independence of the set $\mathfrak{B}_{W_{L(k\Lambda_{0})}}$ in the same way as it is used in \cite{G}.
\par Simple current operator $e_{\omega_{1}}$ is a linear bijection between the level $1$ standard modules
\begin{equation*}
e_{\omega_{1}}:L(\Lambda_0)\rightarrow L(\Lambda_1),
\end{equation*}
such that
\begin{equation}\label{S341}
x_{\alpha}(z)e_{\omega_{1}}=e_{\omega_{1}}z^{\alpha(\omega_{1})}x_{\alpha}(z),
\end{equation}
for all $\alpha \in R$, or, written by components,
\begin{equation}\label{S342}
x_{\alpha}(m)e_{\omega_{1}}=e_{\omega_{1}}x_{\alpha}(m+\alpha(\omega_{1})),
\end{equation}
for all $\alpha \in R$ and $m \in \mathbb{Z}$ (cf. \cite{DLM}, \cite{P}).
\par Primc in \cite{P} showed that 
\begin{equation*}
e_{\omega_{1}}{v}_{\Lambda_{0}}=v_{\Lambda_{1}}.
\end{equation*}
Thus, we can rewrite (\ref{Tanja}) as 
\begin{align}\label{glupaTanja}
\cdots \otimes x_{n^{(s)}_{r^{(2s-1)}_{2},2}\alpha_{2}}(z_{r_{2}^{(2s-1)},2})\cdots     x_{n^{(s)}_{r^{(2s)}_{2},2}\alpha_{2}}(z_{r^{(2s)}_{2},2})\cdots  x_{n^{(s)}_{1,2}\alpha_{2}}(z_{1,2})\\
 \nonumber
x_{n^{(s)}_{r^{(s)}_{1},1}\alpha_{1}}(z_{r_{1}^{(s)},1})\cdots  x_{\alpha_{1}}(z_{1,1})e_{\omega_1}v_{\Lambda_{0}}\otimes \cdots. 
 \end{align}
\par We define the linear bijection
\begin{equation}\label{S657}
1\otimes\ldots \otimes 1\otimes e_{\omega_{1}} \otimes \underbrace{1 \otimes \ldots \otimes 1}_{s-1 \ \text{factors}}.
\end{equation}
If we act with the operator (\ref{S657}) on vector $v_{k\Lambda_0}$, we get 
\begin{align*} 
\left(1\otimes \cdots \otimes  1 \otimes  e_{\omega_{1}} \otimes 1 \otimes \cdots \otimes 1\right)(v_{k\Lambda_0})
=v_{\Lambda_{0}}\otimes \cdots \otimes v_{\Lambda_{0}}\otimes  v_{\Lambda_{1}}\otimes  \underbrace{v_{\Lambda_{0}} \otimes \cdots \otimes    v_{\Lambda_{0}}}_{s-1 \ \text{factors}}.
\end{align*}
Now it follows that we can write (\ref {glupaTanja}) as 
\begin{equation*}
\cdots \otimes x_{n^{(s)}_{r^{(2s)}_{2},2}\alpha_{2}}(z_{r_{2}^{(2s-1)},2})\cdots           x_{n^{(s)}_{r^{(2s)}_{2},2}\alpha_{2}}(z_{r^{(2s)}_{2},2})\cdots  x_{n^{(s)}_{1,2}\alpha_{2}}(z_{1,2})\end{equation*}
\begin{equation*}
x_{n^{(s)}_{r_{1}^{(k)},1}\alpha_{1}}(z_{r_{1}^{(k)},1})z_{r_{1}^{(k)},1}\cdots x_{\alpha_{1}}(z_{1,1})z_{1,1}v_{\Lambda_{0}}\cdots.\end{equation*}
By taking the corresponding coefficients, we have
\begin{equation*}
(1\otimes\cdots \otimes A_{\omega_{1}} \otimes 1 \otimes \cdots \otimes 1)\pi_{\mathfrak{R}}bv_{k\Lambda_0}=(1\otimes\cdots \otimes      e_{\omega_{1}} \otimes 1 \otimes \cdots \otimes 1)\pi_{\mathfrak{R}}b^{+}v_{k\Lambda_0}
\end{equation*}
where the monomial $b^{+}$:
\begin{align*}
b^{+}=b^+(\alpha_{2})b^{+}(\alpha_{1}),
\end{align*}
is such that 
\begin{align*}
b^+(\alpha_{2})&=b(\alpha_{2}),&\\
b^+(\alpha_{1})&=x_{n_{r_{1}^{(1)},1}\alpha_{1}}(m_{r_{1}^{(1)},1}+1)\cdots x_{s\alpha_{1}}(m_{1,1}+1)&\\
&=x_{n_{r_{1}^{(1)},1}\alpha_{1}}(m^{+}_{r_{1}^{(1)},1})\cdots x_{s\alpha_{1}}(m^{+}_{1,1}).&
\end{align*}

\subsection{Operator \texorpdfstring{$e_{\alpha_{1}}$}{ealpha1}}\label{S66}
For the simple root $\alpha_1 \in \Pi$, we define on the level $1$ standard module $L(\Lambda_0)$, the ``Weyl group translation'' operator  $e_{\alpha_1}$ by
\begin{equation*}
 e_{\alpha_1}=\exp\  x_{-\alpha_1}(1)\exp\  (- x_{\alpha_1}(-1))\exp\  x_{-\alpha_1}(1) \exp\ x_{\alpha_1}(0)\exp\   (-x_{-\alpha_1}(0))\exp\ x_{\alpha_1}(0),\end{equation*}
(cf. \cite{K}). On the standard $\widehat{\mathfrak{g}}$-module $L(\Lambda_0)$, we have
\begin{lem}
\begin{align}\label{S666}
x_{\alpha_1}(j)e_{\alpha_1}&=e_{\alpha_1}x_{\alpha_1}(j+2),\\
\label{S669}
x_{\alpha_2}(j)e_{\alpha_1}&=e_{\alpha_1}x_{\alpha_2}(j-1)
\end{align}
for every $j \in \mathbb{Z}$. 
\end{lem}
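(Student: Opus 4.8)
The plan is to reduce both relations to the adjoint action of $e_{\alpha_1}$ inside $\widehat{\mathfrak{g}}$ and then to recognise $e_{\alpha_1}$ as realising an affine Weyl translation for the subalgebra $\widetilde{sl}_2(\alpha_1)$. First I would note that $x_{-\alpha_1}(1)$, $x_{\alpha_1}(-1)$ and $x_{\pm\alpha_1}(0)$ are root vectors for the real roots $\delta-\alpha_1$, $\alpha_1-\delta$, $\pm\alpha_1$ of $\widehat{\mathfrak{g}}$; hence each acts locally nilpotently on the integrable module $L(\Lambda_0)$ and, through $\mathrm{ad}$, locally nilpotently on $\widehat{\mathfrak{g}}$ itself. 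Therefore each of the six exponential factors is a well-defined invertible operator on $L(\Lambda_0)$, $e_{\alpha_1}$ is a linear bijection, and $e_{\alpha_1}^{-1}$ is the product of the opposite exponentials in reverse order. Since $\mathrm{Ad}(\exp a)=\exp(\mathrm{ad}\,a)$, the relations (\ref{S666}) and (\ref{S669}) are equivalent to the identities $e_{\alpha_1}^{-1}x_{\alpha_1}(j)e_{\alpha_1}=x_{\alpha_1}(j+2)$ and $e_{\alpha_1}^{-1}x_{\alpha_2}(j)e_{\alpha_1}=x_{\alpha_2}(j-1)$ for the adjoint action on $\widehat{\mathfrak{g}}$.

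Next I would factor $e_{\alpha_1}=\rho_0\rho_1$ with $\rho_1=\exp x_{\alpha_1}(0)\exp(-x_{-\alpha_1}(0))\exp x_{\alpha_1}(0)$ and $\rho_0=\exp x_{-\alpha_1}(1)\exp(-x_{\alpha_1}(-1))\exp x_{-\alpha_1}(1)$. Because $\alpha_1$ is a long root, $\widetilde{sl}_2(\alpha_1)$ has central element $c$, and using the affine commutation relations together with $\langle x_{\alpha_1},x_{-\alpha_1}\rangle=1$ one checks that $(x_{\alpha_1}(0),x_{-\alpha_1}(0))$ and $(x_{-\alpha_1}(1),x_{\alpha_1}(-1))$ are the $(e,f)$-pairs attached to the two simple roots $\alpha_1$ and $\delta-\alpha_1$ of $\widetilde{sl}_2(\alpha_1)$ (e.g. $[x_{-\alpha_1}(1),x_{\alpha_1}(-1)]=c-\alpha_1^{\vee}$, the corresponding affine simple coroot). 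Thus $\rho_1$ and $\rho_0$ are the standard $SL_2$-representatives of the reflections $s_{\alpha_1}$ and $s_{\delta-\alpha_1}$, so $\mathrm{Ad}(e_{\alpha_1})$ acts on affine roots as $s_{\delta-\alpha_1}s_{\alpha_1}=t_{\alpha_1^{\vee}}$, the translation by $\alpha_1^{\vee}$ in $\widehat{W}(\widetilde{sl}_2(\alpha_1))=W\ltimes\mathbb{Z}\alpha_1^{\vee}$. Conjugation by a representative of $w\in\widehat{W}$ sends a real root vector $x_\beta(n)$ to a nonzero scalar multiple of the root vector for $w(\beta+n\delta)$; applying $t_{\alpha_1^{\vee}}^{-1}=t_{-\alpha_1^{\vee}}$ and the formula $t_\lambda(\beta+n\delta)=\beta+(n-\langle\beta,\lambda\rangle)\delta$, with $\langle\alpha_1,\alpha_1^{\vee}\rangle=2$ and $\langle\alpha_2,\alpha_1^{\vee}\rangle=\langle\epsilon_2,\epsilon_1-\epsilon_2\rangle=-1$, produces exactly the mode shifts $j\mapsto j+2$ for color $1$ and $j\mapsto j-1$ for color $2$.

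The only delicate point, and the step I expect to be the real obstacle, is to verify that all the scalars occurring in these conjugations equal $1$, so that the identities hold exactly rather than up to sign. For this I would track $x_{\alpha_1}(j)$, and then $x_{\alpha_2}(j)$, explicitly through the six exponentials: on the relevant elements each $\mathrm{ad}$ is nilpotent of index at most $3$, so each factor contributes a short finite sum, and one computes step by step, taking care to include and then dispose of the central terms $\langle x,y\rangle\,j\,\delta_{j+j',0}c$ that appear in intermediate brackets such as $[x_{-\alpha_1}(1),x_{\alpha_1}(-1)]$. For the color-$2$ case one uses in addition that $x_{\alpha_2}$ and $x_{\alpha_1+\alpha_2}$ span a two-dimensional $sl_2(\alpha_1)$-module with $s_{\alpha_1}(\alpha_2)=\alpha_1+\alpha_2$, and one evaluates $\mathrm{ad}(x_{\pm\alpha_1}(m))$ on $x_{\alpha_2}(j)$ via the relations (\ref{al:S24}) and (\ref{al:S25}); the bookkeeping is longer but entirely analogous. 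Should the sign chase become unwieldy, the fallback is to quote the known action of the translation operators on the basic modules of affine $sl_2$ (as in \cite{K}, or the analogous computations in \cite{P} and \cite{G}) once $e_{\alpha_1}$ has been identified with such an operator as above.
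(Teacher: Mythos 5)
Your proposal is correct, and its decisive step coincides with the paper's proof: the paper disposes of (\ref{S666}) and (\ref{S669}) in one line by conjugating through the six exponential factors via $(\exp a)b(\exp(-a))=(\exp(\operatorname{ad}a))(b)$ for locally nilpotent $\operatorname{ad}a$ (citing \cite{K}), which is exactly the ``track the scalars explicitly'' computation you reserve for the end. What you add on top is the structural identification: writing $e_{\alpha_1}=\rho_0\rho_1$ with $\rho_1,\rho_0$ the standard representatives of $s_{\alpha_1}$ and $s_{\delta-\alpha_1}$ in $\widetilde{sl}_2(\alpha_1)$ (using $[x_{-\alpha_1}(1),x_{\alpha_1}(-1)]=c-\alpha_1^{\vee}$, valid since $\alpha_1$ is long), so that $\operatorname{Ad}(e_{\alpha_1})$ realizes the translation $t_{\alpha_1^{\vee}}$ and the shifts $j\mapsto j+2$, $j\mapsto j-1$ are read off from $\langle\alpha_1,\alpha_1^{\vee}\rangle=2$ and $\langle\alpha_2,\alpha_1^{\vee}\rangle=-1$. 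That buys conceptual transparency (it predicts the exponents in (\ref{mama1})--(\ref{mama2}) before any computation), but, as you correctly flag, it only yields the relations up to nonzero scalars, so it does not replace the exp--ad bookkeeping; it merely organizes it. The scalar check does close: e.g.\ for color $1$, $x_{\alpha_1}(j)$ is a lowest weight vector of a three-dimensional module both for the triple $(x_{-\alpha_1}(1),x_{\alpha_1}(-1),c-\alpha_1^{\vee})$ and, after the first reflection, for $(x_{\alpha_1}(0),x_{-\alpha_1}(0),\alpha_1^{\vee})$, and the two reflection representatives each contribute a factor $-1$ which cancel, giving exactly $e_{\alpha_1}^{-1}x_{\alpha_1}(j)e_{\alpha_1}=x_{\alpha_1}(j+2)$; the color-$2$ case is analogous (and shorter, since $\operatorname{ad}x_{\pm\alpha_1}$ is nilpotent of smaller index on the $\alpha_2$-string). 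Your fallback of quoting the translation-operator action from \cite{K}, \cite{P} or \cite{G} is also legitimate and consistent with how the paper treats this lemma.
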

\begin{proof}
Expressions (\ref{S666}) and (\ref{S669}) follow from 
\begin{equation*}
(\exp \ a)b (\exp \ (-a))=( \exp \ (\text{ad} \ a))(b),
\end{equation*} 
for Lie algebra elements $a$ and $b$ such that $(\text{ad} \ a)^tb=0$ for some $t$ (cf. \cite{K}). 
\end{proof}
We will use the following expressions which follow from (\ref{S666}) and (\ref{S669}):
\begin{align}\label{mama1}
x_{\alpha_1}(z)e_{\alpha_1}&=z^2e_{\alpha_1}x_{\alpha_1}(z)\\
\label{mama2}
x_{\alpha_2}(z)e_{\alpha_1}&=z^{-1}e_{\alpha_1}x_{\alpha_2}(z).
\end{align}
It is easy to see that we have: 
\begin{lem}\label{mama}
\begin{equation}\label{S662}
e_{\alpha_1}v_{\Lambda_0}=-x_{\alpha_{1}}(-1)v_{\Lambda_{0}}
\end{equation}
\end{lem}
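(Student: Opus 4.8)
The plan is to act with the six exponential factors defining $e_{\alpha_1}$ on $v_{\Lambda_0}$ one at a time, from right to left, using only standard properties of the highest weight vector of the level $1$ module $L(\Lambda_0)$. Since $\alpha_1$ is a long root, the restriction of $L(\Lambda_0)$ to $\widetilde{sl}_2(\alpha_1)$ has level $1$, so $c$ acts as the identity, and since $\Lambda_0$ is the basic weight we have $\langle \Lambda_0,\alpha_1^\vee\rangle=0$. Consequently $v_{\Lambda_0}$ satisfies $x_{\alpha_1}(j)v_{\Lambda_0}=0$ for all $j\geq 0$, $x_{-\alpha_1}(j)v_{\Lambda_0}=0$ for all $j\geq 1$, $x_{-\alpha_1}(0)v_{\Lambda_0}=0$ and $\alpha_1^\vee(0)v_{\Lambda_0}=0$.

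First I would discard the rightmost factors. Because $x_{\alpha_1}(0)v_{\Lambda_0}=0$ and $x_{-\alpha_1}(0)v_{\Lambda_0}=0$, each of $\exp x_{\alpha_1}(0)$, $\exp(-x_{-\alpha_1}(0))$, $\exp x_{\alpha_1}(0)$ fixes $v_{\Lambda_0}$; likewise $\exp x_{-\alpha_1}(1)$ fixes $v_{\Lambda_0}$ since $x_{-\alpha_1}(1)v_{\Lambda_0}=0$. Hence it remains to evaluate $\exp x_{-\alpha_1}(1)\exp(-x_{\alpha_1}(-1))v_{\Lambda_0}$. By the $k=1$ instance of $(\ref{al:S15})$ we have $x_{\alpha_1}(z)^2=0$ on $L(\Lambda_0)$; taking the coefficient that acts on $v_{\Lambda_0}$ leaves only the term $x_{\alpha_1}(-1)x_{\alpha_1}(-1)$, so $x_{\alpha_1}(-1)^2v_{\Lambda_0}=0$ and therefore $\exp(-x_{\alpha_1}(-1))v_{\Lambda_0}=v_{\Lambda_0}-x_{\alpha_1}(-1)v_{\Lambda_0}$.

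Finally, applying the $\widehat{\mathfrak{g}}$-commutation relation together with $[x_{-\alpha_1},x_{\alpha_1}]=-\alpha_1^\vee$ and the long-root normalization $\langle x_{\alpha_1},x_{-\alpha_1}\rangle=1$ gives $[x_{-\alpha_1}(1),x_{\alpha_1}(-1)]=-\alpha_1^\vee(0)+c$, which acts on $v_{\Lambda_0}$ as the identity; hence $x_{-\alpha_1}(1)x_{\alpha_1}(-1)v_{\Lambda_0}=v_{\Lambda_0}$ and $x_{-\alpha_1}(1)^2x_{\alpha_1}(-1)v_{\Lambda_0}=x_{-\alpha_1}(1)v_{\Lambda_0}=0$. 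Thus
\begin{equation*}
\exp x_{-\alpha_1}(1)\left(v_{\Lambda_0}-x_{\alpha_1}(-1)v_{\Lambda_0}\right)=v_{\Lambda_0}-\left(x_{\alpha_1}(-1)v_{\Lambda_0}+v_{\Lambda_0}\right)=-x_{\alpha_1}(-1)v_{\Lambda_0},
\end{equation*}
which is exactly $(\ref{S662})$. I expect the only point requiring care to be this last step: tracking the central term ($c$ acting as $1$ in level $1$) and the form normalization $\langle x_{\alpha_1},x_{-\alpha_1}\rangle=1$, since these together produce the sign; all the exponential series terminate at low order because every relevant operator acts locally nilpotently on $v_{\Lambda_0}$ and on $x_{\alpha_1}(-1)v_{\Lambda_0}$.
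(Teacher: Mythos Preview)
Your computation is correct and carries out precisely the direct verification that the paper leaves to the reader: the paper offers no proof beyond ``It is easy to see,'' so your step-by-step application of the six exponential factors is the natural expansion of that omitted argument. The key inputs you isolate---$x_{-\alpha_1}(0)v_{\Lambda_0}=0$ from $\langle\Lambda_0,\alpha_1^\vee\rangle=0$, the vanishing $x_{\alpha_1}(-1)^2v_{\Lambda_0}=0$ from the level-$1$ relation $x_{\alpha_1}(z)^2=0$, and the commutator $[x_{-\alpha_1}(1),x_{\alpha_1}(-1)]=-\alpha_1^\vee(0)+c$ acting as $1$---are exactly what is needed, and your normalization check $\langle x_{\alpha_1},x_{-\alpha_1}\rangle=1$ (forced by $[x_{\alpha_1},x_{-\alpha_1}]=\alpha_1^\vee$ together with $\langle\alpha_1,\alpha_1\rangle=2$) is the right justification for the central term.
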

\begin{flushright}
$\square$
\end{flushright}
Set 
\begin{equation*}
1\otimes\cdots  \otimes 1\otimes \underbrace{e_{\alpha_{1}} \otimes e_{\alpha_{1}} \otimes \cdots \otimes e_{\alpha_{1}}}_{s \ \text{factors}}
\end{equation*}
where $s \leq k$. 
From Lemma \ref{mama}, it now follows
\begin{align*} 
&\left(1\otimes\cdots  \otimes 1\otimes e_{\alpha_{1}} \otimes e_{\alpha_{1}}\otimes  \cdots \otimes e_{\alpha_{1}}\right)v_{k\Lambda_0}&\\
=&(-1)^sv_{\Lambda_{0}}\otimes
 \cdots\otimes v_{\Lambda_{0}}\otimes \underbrace{x_{\alpha_{1}}(-1)v_{\Lambda_{0}}\otimes      x_{\alpha_{1}}(-1)v_{\Lambda_{0}}\otimes \cdots \otimes  x_{\alpha_{1}}(-1)v_{\Lambda_{0}}}_{s \ \text{factors}}&
\end{align*}
Let $b$ be a monomial   
\begin{align}\label{S6610}
b&=b(\alpha_{2})b(\alpha_{1})x_{s\alpha_{1}}(-s)&\\
\nonumber
&=x_{n_{r^{(1)}_{2},2}\alpha_{2}}(m_{r^{(1)}_{2},2})\cdots     x_{n_{1,2}\alpha_{2}}(m_{1,2})x_{n_{r^{(1)}_{1},1}\alpha_{1}}(m_{r^{(1)}_{1},1})\cdots  x_{n_{2,1}\alpha_{1}}(m_{2,1})x_{s\alpha_{1}}(-s),&
\end{align}
of dual-charge-type 
\begin{equation*}
\mathfrak{R}=\left(r^{(1)}_{2},\ldots, r^{(2k)}_{2};r^{(1)}_{1},\ldots, r_1^{(s)},0 \ldots, 0\right).
\end{equation*}
As in section \ref{ss:proj}, let $\pi_{\mathfrak{R}}$ be the projection of principal subspace $W_{L(\Lambda_{0})}\otimes \cdots\otimes W_{L(\Lambda_{0})}$ on the vector space  
\begin{equation*}
{W_{L(\Lambda_{0})}}_{(\mu^{(k)}_{2};0)}\otimes \cdots \otimes {W_{L(\Lambda_{0})}}_{(\mu^{(s)}_{2};r_{1}^{(s)})}\otimes \cdots\otimes {W_{L(\Lambda_{0})}}_{(\mu^{(1)}_{2};r_{1}^{(1)})}. \end{equation*}
The projection \begin{equation*}
\pi_{\mathfrak{R}}b\left(v_{\Lambda_{0}}\otimes \ldots\otimes v_{\Lambda_{0}}\right)\end{equation*}
of the monomial vector $b\left(v_{\Lambda_{0}}\otimes
 \ldots \otimes  v_{\Lambda_{0}}\right)$ is a coefficient of the generating function
\begin{equation*}\pi_{\mathfrak{R}}x_{n_{r_{2}^{(1)},2}\alpha_{2}}(z_{r_{2}^{(1)},2})\cdots
 x_{n_{1,2}\alpha_{2}}(z_{1,2})x_{n_{r_{1}^{(1)},1}\alpha_{1}}(z_{r_{1}^{(1)},1}) 
\cdots  x_{n_{2,1}\alpha_{1}}(z_{2,1})\end{equation*}
\begin{equation*}\left(v_{\Lambda_{0}}\otimes
 \cdots \otimes  v_{\Lambda_{0}}\otimes x_{\alpha_{1}}(-1)v_{\Lambda_{0}}\otimes \cdots \otimes  x_{\alpha_{1}}(-1)v_{\Lambda_{0}}\right)\end{equation*}
\begin{equation*}=Cx_{n^{(k)}_{r^{(2k-1)}_{2},2}\alpha_{2}}(z_{r_{2}^{(2k-1)},2}) \cdots 
 x_{n^{(k)}_{r^{(2k)}_{2},2}\alpha_{2}}(z_{r^{(2k)}_{2},2})\cdots
 x_{n^{(k)}_{1,2}\alpha_{2}}(z_{1,2})v_{\Lambda_{0}}\end{equation*}
\begin{equation*}\otimes \cdots \otimes \end{equation*}
\begin{equation*}\otimes x_{n^{(s)}_{r^{(2s-1)}_{2},2}\alpha_{2}}(z_{r_{2}^{(2s-1)},2}) \cdots 
 x_{n^{(s)}_{r^{(2s)}_{2},2}\alpha_{2}}(z_{r^{(2s)}_{2},2})\cdots  x_{n^{(s)}_{1,2}\alpha_{2}}(z_{1,2})\end{equation*}
\begin{equation*} x_{n^{(s)}_{r^{(s)}_{1},1}\alpha_{1}}(z_{r_{1}^{(s)},1})\cdots    x_{n^{(s)}_{2,1}\alpha_{1}}(z_{2,1})e_{\alpha_{1}}v_{\Lambda_{0}}\end{equation*}
\begin{equation*}\otimes \cdots \otimes \end{equation*}
\begin{equation*}\otimes x_{n_{r^{(1)}_{2},2}^{(1)}\alpha_{2}}(z_{r_{2}^{(1)},2})\cdots 
 x_{n_{r^{(2)}_{2},2}^{(1)}\alpha_{2}}(z_{r_{2}^{(2)},2})\cdots x_{n_{2,2}^{(1)}\alpha_{2}}(z_{2,2})
x_{n_{1,2}^{(1)}\alpha_{2}}(z_{1,2})\end{equation*}
\begin{equation*}x_{n_{r^{(1)}_{1},1}^{(1)}\alpha_{1}}(z_{r_{1}^{(1)},1})\cdots   x_{n{_{2,1}^{(1)}\alpha_{1}}}(z_{2,1})e_{\alpha_{1}}v_{\Lambda_{0}},\end{equation*}
(see (\ref{projekcija})).  We shift operator $1\otimes\ldots \otimes e_{\alpha_{1}} \otimes e_{\alpha_{1}} \otimes \ldots \otimes        e_{\alpha_{1}}$ all the way to the left using commutation relations (\ref{mama1}) and (\ref{mama2}) 
\begin{equation*}
(1 \otimes \cdots \otimes  e_{\alpha_{1}} \otimes e_{\alpha_{1}} \otimes \cdots \otimes  e_{\alpha_{1}})\pi_{\mathfrak{R}'} b'\left(v_{\Lambda_{0}}\otimes \ldots\otimes v_{\Lambda_{0}}\right), 
\end{equation*} 
where 
\begin{equation*}
\mathfrak{R}'=\left(r^{(1)}_{2},\ldots, r^{(2s)}_{2};r^{(1)}_{1}-1,\ldots, r_1^{(s)}-1\right)
\end{equation*}
and
\begin{align*}
b'&=b'(\alpha_{2})b'(\alpha_{1})&\\
&=x_{n_{r^{(1)}_{2},2}\alpha_{2}}(m_{r^{(1)}_{2},2}-n^{(1)}_{r_{2}^{(1)},2}-\cdots-n^{(s)}_{r_{2}^{(1)},2})&\\
& \ \ \ \ \ \ \ \ \ \ \ \ \ \ \ \ \ \ \ \ \ \ \ \ \ \  \cdots x_{n_{1,2}\alpha_{2}}(m_{1,2}-n^{(1)}_{1,2}-\cdots-n^{(s)}_{1,2})&\\
& \ \ \ \ \ \ \ \ \ \ \ \ \ \ \ \ \ \ \ \ \ \ \ \ \ \ \  x_{n_{r^{(1)}_{1}\alpha_{1}}}(m_{r^{(1)}_{1},1}+2n_{r^{(1)}_{1}})\cdots   x_{n_{2,1}\alpha_{1}}(m_{2,1}+2n_{1,2})&\\
&=x_{n_{r^{(1)}_{2},2}\alpha_{2}}(m'_{r^{(1)}_{2},2})\cdots 
 x_{n_{1,2}\alpha_{2}}(m'_{1,2})x_{n_{r^{(1)}_{1}\alpha_{1}}}(m'_{r^{(1)}_{1},1})\cdots x_{n_{2,1}\alpha_{1}}(m'_{2,1}).&
\end{align*}
In the proof of linear independence, we use the following proposition 
\begin{prop}\label{S66P}
Let $b$ (\ref{S6610}) be an element of the set $B_{W_{L(k\Lambda_{0})}}$. Then the monomial $b'$ is an element of the set $B_{W_{L(k\Lambda_{0})}}$. 
\end{prop}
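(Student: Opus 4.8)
The plan is to follow the transformation of the defining data of $b$ (the monomial in (\ref{S6610})) under the passage $b\mapsto b'$ and then to check one by one the four families of constraints in the definition (\ref{SkupL}) of $B_{W_{L(k\Lambda_0)}}$. From the computation in Section~\ref{S66} — pushing the operator with $s$ trailing factors $e_{\alpha_1}$ to the left by means of (\ref{mama1}), (\ref{mama2}) and Lemma~\ref{mama} — the monomial $b'$ is obtained from $b$ by deleting the charge-$s$ quasi-particle $x_{s\alpha_1}(-s)=x_{n_{1,1}\alpha_1}(m_{1,1})$, by replacing each surviving color-$1$ energy by $m'_{p,1}=m_{p,1}+2n_{p,1}$ (for $2\le p\le r_1^{(1)}$), and by replacing each color-$2$ energy by $m'_{p,2}=m_{p,2}-(n^{(1)}_{p,2}+\cdots+n^{(s)}_{p,2})$ (for $1\le p\le r_2^{(1)}$), with dual-charge-type $\mathfrak{R}'=(r_2^{(1)},\ldots,r_2^{(2s)};r_1^{(1)}-1,\ldots,r_1^{(s)}-1)$. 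First I would record these formulas, the relabelling $p\mapsto p-1$ of the color-$1$ indices, and three elementary facts that do all the work: (a) since $x_{s\alpha_1}(-s)$ occurs in $b$ we have $r_1^{(s)}\ge1$, so $r_1^{(1)}-1\ge\cdots\ge r_1^{(s)}-1\ge0$ and $\mathfrak{R}'$ is admissible; (b) the shape $(\ldots;r_1^{(1)},\ldots,r_1^{(s)},0,\ldots,0)$ of the dual-charge-type of $b$ forces $n_{1,1}=s$ to be the largest color-$1$ charge occurring in $b$, hence $\min\{n_{p,1},n_{1,1}\}=n_{p,1}$ for every $p$; and (c) since the split $0\le n^{(t)}_{p,2}\le2$, $n^{(1)}_{p,2}\ge\cdots\ge n^{(k)}_{p,2}$ is determined by $n_{p,2}$ and puts two units into each slot until exhausted, one has $\sum_{t=1}^{s}n^{(t)}_{p,2}=\min\{2s,n_{p,2}\}$ for every $p$.

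The color-$1$ conditions then follow immediately. The charge-type of $b'$ satisfies $n_{r_1^{(1)},1}\le\cdots\le n_{2,1}\le n_{1,1}=s\le k$, so it is admissible. Quasi-particles of equal charge are shifted by the same amount $2n_{p,1}$, so the difference condition $m'_{p+1,1}\le m'_{p,1}-2n_{p,1}$ for $n_{p,1}=n_{p+1,1}$ is literally the corresponding condition for $b$. For the initial bound, by (b) the contribution of the deleted quasi-particle to $b$'s bound is $2\min\{n_{p,1},n_{1,1}\}=2n_{p,1}$, whence
\[
m'_{p,1}=m_{p,1}+2n_{p,1}\le -n_{p,1}-2n_{p,1}-\!\!\sum_{p>p'>1}\!\!2\min\{n_{p,1},n_{p',1}\}+2n_{p,1}=-n_{p,1}-\!\!\sum_{p>p'>1}\!\!2\min\{n_{p,1},n_{p',1}\},
\]
which is exactly the bound required of $b'$ after relabelling.

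For color $2$ nothing is deleted, so the charge-type is unchanged and admissible; and since the split numbers depend only on the charge, $\sum_{t=1}^{s}n^{(t)}_{p,2}=\sum_{t=1}^{s}n^{(t)}_{p+1,2}$ whenever $n_{p,2}=n_{p+1,2}$, so the corresponding difference condition for $b'$ again reduces to that for $b$. For the last bound, deleting the charge-$s$ quasi-particle removes from $b$'s bound exactly the summand $\min\{2n_{1,1},n_{p,2}\}=\min\{2s,n_{p,2}\}$ of the color-$1$ sum, and by (c) this equals $\sum_{t=1}^{s}n^{(t)}_{p,2}$, the amount by which $m_{p,2}$ drops; therefore
\[
m'_{p,2}=m_{p,2}-\min\{2s,n_{p,2}\}\le -n_{p,2}+\!\!\sum_{q=2}^{r_1^{(1)}}\!\!\min\{2n_{q,1},n_{p,2}\}-\!\!\sum_{p>p'>0}\!\!2\min\{n_{p,2},n_{p',2}\},
\]
which is the bound demanded of $b'$, so $b'\in B_{W_{L(k\Lambda_0)}}$. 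I do not expect a conceptual obstacle; the argument is essentially bookkeeping, and the only point requiring care is keeping the $\min$-terms straight — that every color-$1$ charge of $b$ is $\le s$ and that the color-$2$ split saturates the first $s$ slots to $\min\{2s,n_{p,2}\}$ — both of which are built into the hypothesis that $b\in B_{W_{L(k\Lambda_0)}}$ has dual-charge-type of the prescribed shape.
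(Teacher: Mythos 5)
Your proof is correct and follows essentially the same route as the paper: a direct bookkeeping verification that the shifted energies $m'_{p,1}=m_{p,1}+2n_{p,1}$ and $m'_{p,2}=m_{p,2}-\min\{2s,n_{p,2}\}$ still satisfy the defining inequalities of $B_{W_{L(k\Lambda_0)}}$, using that the deleted quasi-particle has maximal color-$1$ charge $s$ so its $\min$-contributions exactly cancel the shifts. The only (cosmetic) difference is that the paper splits the color-$2$ check into the cases $n_{p,2}\geq 2s$ and $n_{p,2}<2s$, which your identity $\sum_{t\leq s}n^{(t)}_{p,2}=\min\{2s,n_{p,2}\}$ treats uniformly, and your color-$1$ bound peels off only the $p'=1$ term instead of evaluating all the $\min$'s.
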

\begin{proof}
The proposition follows by considering the possible situation for $n_{p,i}$ $2\leq p \leq r^{(1)}_{1}$ and $ 1\leq p \leq r^{(1)}_{2}$ from which it follows that $m_{p,i}$ comply the defining conditions of the set $B_{W_{L(k\Lambda_{0})}}$:
\begin{enumerate}
\item For $n_{p,1}=\bar{s}\leq s$, we have
\begin{align*} 
m'_{p,1}&=m_{p,1}+2\bar{s}\\
&\leq -\bar{s}- 2 (p-1)\bar{s} +2\bar{s}\\
&=-\bar{s}- 2 (p-2)\bar{s}
\end{align*}
and 
\begin{align*}
m'_{p+1,1}&=m_{p+1,1}+2\bar{s}\\
& \leq  -2\bar{s}+m_{p,1}+2\bar{s}\\
&= m'_{p,1} -2\bar{s} 
\ \ \ \ \ \ \ \text{for} \ \ \ n_{p+1,1}=n_{p,1}.
\end{align*}
\item For $n_{p,2}\geq 2s$, we have
\begin{align*} 
m'_{p,2}&=  m_{p,2}-2s\\
 &\leq  -n_{p,2} 
- \sum_{p>p'>0} 2 \ \text{min} \left\{n_{p,2},n_{p',2}\right\}+  
\sum^{r^{(1)}_{1}}_{q=1}\text{min} \left\{n_{p,2},2n_{q,1}\right\}-2s\\
 &=  -n_{p,2}
- \sum_{p>p'>0} 2 \ \text{min} \left\{n_{p,2},n_{p',2}\right\}+ 
 \sum^{r^{(1)}_{1}}_{q=2}\text{min} \left\{n_{p,2},2n_{q,1}\right\}
\end{align*}
and
\begin{align*}
m'_{p+1,2}&= m_{p+1,2} -2s\\
 & \leq m_{p,2} -2n_{p,2} -2s\\ 
&= m'_{p,2} -2n_{p,2}
 \ \ \ \ \ \text{for} \ \ \ n_{p-1,2}=n_{p,2}.
\end{align*}
\item For $n_{p,2}< 2s$, we have:
\begin{align*}
m'_{p,2}&= m_{p,2}-n_{p,2}\\
 &\leq  -2n_{p,2}  
 - \sum_{p>p'>0} 2 \ \text{min} \left\{n_{p,2},n_{p',2}\right\}+ 
 \sum^{r^{(1)}_{1}}_{q=1}\text{ min} \left\{n_{p,2},2n_{q,1}\right\}\\
 &=  -n_{p,2}
- \sum_{p>p'>0} 2 \ \text{min} \left\{n_{p,2},n_{p',2}\right\}+ 
\sum^{r^{(1)}_{1}}_{q=2}\text{ min} \left\{n_{p,2},2n_{q,1}\right\}
\end{align*}
and
\begin{align*}
m'_{p+1,2} &= m_{p+1,2} -n_{p,2} \\
&\leq  m_{p,2} -3n_{p,2}\\
&= m'_{p,2} -2n_{p,2}
 \ \ \ \ \ \text{for} \ \ \ n_{p+1,2}=n_{p,2}.
\end{align*}
\end{enumerate}
\end{proof}

\subsection{The proof of linear independence of the set \texorpdfstring{$\mathfrak{B}_{W_{L(k\Lambda_{0})}}$}{BWL(kLambda0)}}\label{S67}
In this section, we prove the following theorem: 
\begin{thm}\label{S67T1}
The set $\mathfrak{B}_{W_{L(k\Lambda_0)}}$ forms a basis for the principal subspace $W_{L\left(k\Lambda_{0}\right)}$ of $L\left(k\Lambda_{0}\right)$.
\end{thm}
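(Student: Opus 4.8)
The plan is to combine Proposition \ref{prop:S22}, which already gives that $\mathfrak{B}_{W_{L(k\Lambda_0)}}$ spans $W_{L(k\Lambda_0)}$, with a proof of linear independence; together these yield the basis statement. So the entire content is the linear independence of $\mathfrak{B}_{W_{L(k\Lambda_0)}}$, and I would prove it by induction, mirroring Georgiev's argument in \cite{G} but using the operators assembled in Sections \ref{ss:intert}--\ref{S66} in place of the Dong--Lepowsky intertwining operators that are unavailable in type $B_2^{(1)}$. Suppose a finite linear combination $\sum_b c_b\, b v_{k\Lambda_0}=0$ with $b$ ranging over $B_{W_{L(k\Lambda_0)}}$. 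Fix the color-type $(r_2,r_1)$ appearing, use the color-type gradation \eqref{eq:S22} to reduce to a single color-type, then pick out the maximal dual-charge-type $\mathfrak{R}$ (in the ordering on charge-types) occurring among the $b$ with $c_b\neq 0$, and apply the projection $\pi_{\mathfrak{R}}$. By Lemma \ref{S62L}, $\pi_{\mathfrak{R}}$ kills every monomial vector of strictly larger charge-type, so we are left with a relation $\sum_{b} c_b\, \pi_{\mathfrak{R}} b v_{k\Lambda_0}=0$ over monomials $b$ of charge-type exactly $\mathfrak{R}'$ (and dual-charge-type $\mathfrak{R}$), realized inside the tensor product $W_{L(\Lambda_0)}^{\otimes k}$ of level-one principal subspaces.

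Within a fixed dual-charge-type $\mathfrak{R}$, I would run a secondary induction that strips off the color-$1$ part column by column, exactly as in Section \ref{ss:curr} and Section \ref{S66}. The operator $A_{\omega_1}$ (a coefficient of the intertwining operator $I$) commutes with all $x_{\alpha_i}(z)$ by Lemma \ref{lem:S321}, and on the $s$-th tensor factor it has the same effect as the simple current operator $e_{\omega_1}$ of Section \ref{ss:curr}; applying $1\otimes\cdots\otimes e_{\omega_1}\otimes\cdots\otimes 1$ translates the energies of the color-$1$ quasi-particles sitting in that factor by $+1$ (formula \eqref{S342} with $\alpha_1(\omega_1)=1$), and composing with the Weyl-translation operators $e_{\alpha_1}$ of Section \ref{S66} via \eqref{mama1}--\eqref{mama2} removes one color-$1$ box per chosen tensor row while transforming a monomial $b\in B_{W_{L(k\Lambda_0)}}$ into another monomial $b'\in B_{W_{L(k\Lambda_0)}}$ — this last closure is precisely Proposition \ref{S66P}, which is what makes the induction well-defined. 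After finitely many such steps all color-$1$ quasi-particles have been eliminated and the relation lives on the pure-color-$2$ part $\mathfrak{A}_{\mathfrak{R}}$; there I invoke the final reduction of Section \ref{ss:proj}: the further projection $\pi'_{\mathfrak{R}}$ embeds each level-$2$ factor $W_{L^A(2\Lambda_0)}$ into $W_{L^A(\Lambda_0)}\otimes W_{L^A(\Lambda_0)}$, and Georgiev's theorem for $\widetilde{sl}_2$ gives that $\pi'_{\mathfrak{R}}\circ\pi_{\mathfrak{R}}$ is injective on $\mathfrak{A}_{\mathfrak{R}}$, forcing all the relevant $c_b=0$. One then checks that the color-$1$ stripping was injective on coefficients (the operators used are bijections up to nonzero scalars, and the projections separate the remaining monomials by Corollary \ref{cor:S21} and the difference conditions defining $B_{W_{L(k\Lambda_0)}}$), so all $c_b$ vanish, completing the induction.

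The main obstacle I expect is the bookkeeping in the inductive step that interleaves the two colors: one must verify that after moving $e_{\omega_1}$ and the $e_{\alpha_1}$'s to the left, the resulting energies $m'_{p,i}$ still satisfy the (color-$1$ and, crucially, the \emph{cross-color}) difference conditions of $B_{W_{L(k\Lambda_0)}}$ — the terms $\sum_{q}\min\{2n_{q,1},n_{p,2}\}$ in the definition of $B_{W_{N(k\Lambda_0)}}$ shift exactly the right amount under these operators, and this compatibility is the heart of the matter. A second delicate point is that the projection $\pi_{\mathfrak{R}}$ must genuinely distribute charge-$>1$ quasi-particles among tensor factors consistently with how $A_{\omega_1}$, $e_{\omega_1}$ and $e_{\alpha_1}$ act factor-by-factor; this is why the explicit placement rules (Figures 2 and 3) and the relations \eqref{al:S15}, \eqref{al:S16} on level-one modules are needed, and any sign or scalar that appears along the way must be tracked only up to $\mathbb{C}^*$, which suffices for linear independence.
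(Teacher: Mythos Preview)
Your proposal is correct and follows essentially the same approach as the paper's proof: spanning from Proposition \ref{prop:S22}, then linear independence via the projection $\pi_{\mathfrak{R}}$ and Lemma \ref{S62L} to isolate a single charge-type, repeated application of $A_{\omega_1}/e_{\omega_1}$ followed by the $e_{\alpha_1}$ operators (with Proposition \ref{S66P} guaranteeing closure in $B_{W_{L(k\Lambda_0)}}$) to strip off the color-$1$ part, and finally Georgiev's $\widetilde{sl}_2$ result on the remaining pure color-$2$ monomials. One small correction: you should select the \emph{smallest} charge-type among the surviving monomials (the paper picks the smallest monomial $b$ in the order \eqref{eq:S23}), not the ``maximal dual-charge-type'' --- since Lemma \ref{S62L} annihilates monomials of strictly larger charge-type, it is the minimal one whose projection survives.
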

\begin{proof} 
Since the set $\mathfrak{B}_{W_{L(k\Lambda_0)}}$ spans the principal subspace $W_{L\left(k\Lambda_{0}\right)}$, we have to prove the linear independence. We prove linear independence of the set $\mathfrak{B}_{W_{L(k\Lambda_0)}}$ by induction on charge-type of monomials $b \in \mathfrak{B}_{W_{L(k\Lambda_0)}}$. 
\par Assume that we have
\begin{equation}\label{S6752}
\sum_{a \in A}
c_{a}b_av_{k\Lambda_{0}}=0, 
\end{equation}
where $A$ is a finite non-empty set and 
\begin{align*}
b_a\in  B_{W_{L(k\Lambda_0)}}.
\end{align*}
Assume that all $b_a$ are the same color-type $(r_{2},r_{1})$. Let $b$ be the smallest monomial in the linear lexicographic ordering ``$<$''
\begin{align*} 
b&=b(\alpha_{2})b(\alpha_{1})\\
&=x_{n_{r_{2}^{(1)},2}\alpha_{2}}(m_{r_{2}^{(1)},2})\cdots
 x_{n_{1,2}\alpha_{2}}(m_{1,2})x_{n_{r_{1}^{(1)},1}\alpha_{1}}(m_{r_{1}^{(1)},1})\cdots x_{n_{1,1}\alpha_{1}}(-j),
\end{align*}
of charge-type 
\begin{equation}\label{S675444}
\left(n_{r_{2}^{(1)},2}, \ldots , 
 n_{1,2}; n_{r_{1}^{(1)},1}, \ldots ,n_{1,1}\right)
\end{equation}
such that $c_{a}\neq 0$ and such that, for every $b_{a}$ of charge-type (\ref{S675444}), 
we have
\begin{equation*}m_{1,1}\geq -j,\end{equation*}
where $m_{1,1}$ is energy of quasi-particle $x_{n_{1,1}\alpha_1}(m_{1,1})$ of monomial $b_{a}$.  Denote by 
\begin{equation*}
\mathfrak{R}=\left( r_{2}^{(1)},\ldots , r_{2}^{(2k)}; r_{1}^{(1)},\ldots , r_{1}^{(n_{1,1})}\right),
\end{equation*}
the dual-charge-type of $b$ and by $\mathfrak{C}$ the set of all monomial vectors $b_{\mathfrak{R}}v_{k\Lambda_0}$ from (\ref{S6752}) such that monomials $b_{\mathfrak{R}}$ have a charge-type as (\ref{S675444}). 
For every $1\leq t\leq k$ such that
\begin{equation*}
\mu^{(t)}_{2}=r^{(2t)}_{2}+ r^{(2t-1)}_{2}
\end{equation*}
let  $\pi_{\mathfrak{R}}$ be the projection of $\underbrace{W_{L({\Lambda}_{0})}\otimes \ldots\otimes W_{L({\Lambda}_{0})}}_{k \ \text{factors}}$ on the vector space 
\begin{equation*}
{W_{L(\Lambda_{0})}}_{(\mu^{(k)}_{2};0)}\otimes \ldots  \otimes {W_{L(\Lambda_{0})}}_{(\mu^{(n_{1,1}+1)}_{2}; 0)}\otimes {W_{L(\Lambda_{0})}}_{(\mu^{(n_{1,1})}_{2};r_{1}^{(n_{1,1})})}\otimes  \cdots {W_{L(\Lambda_{0})}}_{(\mu^{(1)}_{2};r_{1}^{(1)})}.
\end{equation*}
By Lemma \ref{S62L}, $\pi_{\mathfrak{R}}$ maps to zero all monomial vectors $b_av_{k\Lambda_0}$ such that $b_a$ has a larger charge-type in the linear lexicographic ordering ``$<$'' than (\ref{S675444}). So, in (\ref{S6752}) we have a projection of $b_{a}v_{k\Lambda_0}$, where $b_{a}$ are of charge-type (\ref{S675444})
\begin{equation}\label{S6733}
\sum_{a} c_a\pi_{\mathfrak{R}}b_{a}v_{k\Lambda_{0}}=0, 
\end{equation}
On (\ref{S6733}), we act with
\begin{equation*}
1\otimes\ldots \otimes A_{\omega_{1}} \otimes \underbrace{1 \otimes \ldots \otimes 1}_{n_{1,1}-1 \ \text{factors}}.
\end{equation*}
From \ref{ss:intert} and \ref{ss:curr} follows
\begin{align*}
1\otimes\ldots \otimes A_{\omega_{1}} \otimes 1 \otimes \ldots \otimes 1\left(\sum_{a}
c_{a}\pi_{\mathfrak{R}}b_{a}v_{k\Lambda_{0}}\right)\\
=(1\otimes\ldots \otimes e_{\omega_{1}} \otimes 1 \otimes \ldots \otimes 1)\left(\sum_{a}
c_{a}\pi_{\mathfrak{R}}b_{a}^{+}v_{k\Lambda_{0}}\right).\end{align*}
After leaving out the invertible operator $1\otimes\ldots \otimes
e_{\omega_{1}} \otimes \underbrace{1 \otimes \ldots \otimes 1}_{n_{1,1}-1 \ \text{factors}}$, we get
\begin{equation*} 
\sum_{a} c_{a}\pi_{\mathfrak{R}}b_{a}^{+}v_{k\Lambda_{0}}=0,
\end{equation*}
where
\begin{equation*}
b_{{a}}^{+}=b_{a}^{+}(\alpha_{2})b_{a}^{+}(\alpha_{1}) \in B_{W_{L(k\Lambda_0)}}
\end{equation*}
are the same charge-type as $b_{a}$ in (\ref{S6733}). 

We act with $1\otimes\ldots \otimes A_{\omega_{1}} \otimes \underbrace{1 \otimes \ldots \otimes 1}_{n_{1,1}-1 \ \text{factors}}$, until $j$ becomes $- n_{1,1}$. In that case, we get 
\begin{equation*}
\sum_{a}
c_{a}\pi_{\mathfrak{R}}b_{a}(\alpha_{2})b_{a}^{+}(\alpha_{1})x_{n_{1,1}\alpha_{1}}(-n_{1,1})v_{k\Lambda_{0}}=0
\end{equation*}
where $b_{a}^{+}(\alpha_{1})x_{n_{1,1}\alpha_{1}}(-n_{1,1})$ is of color $i=1$ and \begin{equation*}b_{\mathfrak{R}}(\alpha_{2})b_{\mathfrak{R}}^{+}(\alpha_{1})x_{n_{1,1}\alpha_{1}}(-n_{1,1})v_{k\Lambda_{0}} \in \mathfrak{C},\end{equation*}
that is 
\begin{equation*}b_{\mathfrak{R}}(\alpha_{2})b_{\mathfrak{R}}^{+}(\alpha_{1})x_{n_{1,1}\alpha_{1}}(-n_{1,1})v_{k\Lambda_{0}} \in \mathfrak{B}_{W_{L(k\Lambda_0)}}.\end{equation*} 
From the subsection \ref{S66} follows 
\begin{align*}
&\pi_{\mathfrak{R}}b(\alpha_{2})b^{+}(\alpha_{1})x_{n_{1,1}\alpha_{1}}(-n_{1,1})v_{k\Lambda_{0}}&\\
&=(1\otimes\cdots 1\otimes  e_{\alpha_{1}}\otimes e_{\alpha_{1}} \cdots \otimes   e_{\alpha_{1}})b'(\alpha_{2})b'(\alpha_{1})v_{k\Lambda_{0}},&
\end{align*} where $b'(\alpha_{2})b'(\alpha_{1})$ does not have a quasi-particle of charge $n_{1,1}$. 
 $b'(\alpha_{2})b'(\alpha_{1})$ is of dual-charge-type
\begin{equation*}
\mathfrak{R}^{-}=\left( r_{2}^{(1)}, \ldots ,r_{2}^{(2k)}; r_{1}^{(1)}-1, \ldots , r_{1}^{(n_{1,1})}-1\right),
 \end{equation*}
and charge-type
\begin{equation*}
\left(n_{r_{2}^{(1)},2}, \ldots , n_{1,2}; n_{r_{1}^{(1)},1}, \ldots 
,n_{2,1}\right).\end{equation*}
such that\begin{equation*}
\left(n_{r_{2}^{(1)},2}, \ldots , n_{1,2}; n_{r_{1}^{(1)},1}, \ldots ,n_{2,1}\right) < 
\left(n_{r_{2}^{(1)},2}, \ldots , n_{1,2}; n_{r_{1}^{(1)},1}, \ldots ,n_{2,1},n_{1,1}\right).
\end{equation*} 
From Proposition \ref{S66P}, it follows that we get elements from the set $\mathfrak{B}_{W_{L(k\Lambda_0)}}$.
\par We apply the described processes, until we get monomial ``colored'' only with color $i=2$. Thus by the consideration at the end of subsection \ref{ss:proj}, we have $c_{a}=0$ and the desired theorem follows.
\end{proof}

\subsection{The proof of linear independence of the set \texorpdfstring{$\mathfrak{B}_{W_{N(k\Lambda_{0})}}$}{BWN(kLambda0)}}
We consider the surjective map of $\mathcal{L}(\mathfrak{n}_{+})$-modules
\begin{align*}
\rho_0:U(\mathcal{L}(\mathfrak{n}_{+}))&\rightarrow  W_{N(k{\Lambda}_{0})},\\
b&\mapsto  bv_{N(k{\Lambda}_{0})}. 
\end{align*}
The restriction of $\rho_0$ to $U(\mathcal{L}(\mathfrak{n}_{+})_{<0})$   
\begin{align*}
U(\mathcal{L}(\mathfrak{n}_{+})_{<0})&\rightarrow  W_{N(k{\Lambda}_{0})}\\
b&\mapsto  bv_{N(k{\Lambda}_{0})}&
\end{align*}
is an isomorphism of $\mathcal{L}(\mathfrak{n}_{+})_{<0}$-modules. Let 
\begin{equation*}
J=U(\mathcal{L}(\mathfrak{n}_{+})){\mathcal{L}(\mathfrak{n}_{+})}_{\geq 0}
\end{equation*}
be the left ideal in $U(\mathcal{L}(\mathfrak{n}_{+}))$. Since the ideal $J$ lies in the kernel of $\rho_0$, we can factorize $\rho_0$ 
to a quotient map
\begin{equation*}
\rho:W\rightarrow W_{N(k{\Lambda}_{0})},
\end{equation*} 
where
\begin{equation*}
W=U(\mathcal{L}(\mathfrak{n}_{+}))/J.
\end{equation*}
\begin{ax}\label{S8N1}
$W$ does not depend on the central element $c$, since the root vectors $x, y$ $ \in \mathfrak{n}_{+}$ are orthogonal relative to invariant  simetric bilinear form $\left\langle \cdot,\cdot\right\rangle$ (cf. \cite{H}).
\end{ax}
From Poincar\'{e}-Birkhoff-Witt theorem, it follows 
\begin{equation}\label{84}
W \cong U(\mathcal{L}(\mathfrak{n}_{+})_{<0})\cong S(\mathcal{L}(\mathfrak{n}_{+})_{<0}). 
\end{equation}
Denote by $\pi$ the projection 
\begin{equation*}
\pi: U(\mathcal{L}(\mathfrak{n}_{+})) \rightarrow W,\end{equation*}
\begin{equation*}
\pi(b)=b+J.\end{equation*}
We have 
\begin{equation*}
 \rho(\pi(b))=bv_{N(k\Lambda_0)}.\end{equation*}
From the above consideration, we may conclude that the map $\rho$ is isomorphism of $\mathcal{L}(\mathfrak{n}_{+})$-modules.
 \par $W$ is $\mathbb{Z}$-graded vector space (see (\ref{84})). Let
\begin{equation*}
1 \in \mathbb{C} \subset W.
\end{equation*}
We have  
\begin{equation*}
W = \bigoplus_{n\geq 0}W_{(n)},
\end{equation*}
where  $W_{(n)}$ is spanned by the monomials
\begin{align*}
x_{1}(m_1)\cdots x_{t}(m_t)
\end{align*}
for $t\geq 0$, $x_t \in \mathfrak{n}_+$, with 
$m_1+ \cdots +m_t=-n$, 
$m_j\leq  -1$ 
and $x_{j} \in \mathfrak{n}_{+}$ for every $j=\left\{1, \ldots ,t\right\}$. 
\par One can obtain a basis of $W$ by using an ordered basis of $\mathcal{L}(\mathfrak{n}_{+})$. We obtain a basis of $W$ in terms of quasi-particles. By using this basis, we show that the set $\mathfrak{B}_{W_{N(k\Lambda_0)}}$ is the basis of the principal subspace $W_{N(k\Lambda_0)}$.
\par Define the set $B$:
\begin{equation*}
B= \bigcup_{\substack{0 \leq n_{r_{1}^{(1)},1}\leq \ldots \leq n_{1,1} \\ 0\leq n_{r_{2}^{(1)},2}\leq \ldots \leq n_{1,2}}}\left(\text{or, equivalently,} \ \ \bigcup_{\substack{r_{1}^{(1)}\geq r_{1}^{(2)}\geq \ldots \geq 0\\ r_{2}^{(1)}\geq r_{2}^{(2)}\geq \ldots \geq 0}}\right)\end{equation*}
\begin{equation*}
\left\{b=b(\alpha_{2})b(\alpha_{1})=x_{n_{r_{2}^{(1)},2}\alpha_{2}}(m_{r_{2}^{(1)},2})\cdots x_{n_{1,1}\alpha_{1}}(m_{1,1})\right.:\end{equation*}
\begin{align}\nonumber
\left|
\begin{array}{l}
m_{p,1}\leq  -n_{p,1} - \sum_{p>p'>0} 2 \ \mathrm{min}\{n_{p,1}, 
n_{p',1}\}, \  1\leq  p\leq  r_{1}^{(1)};\\
\nonumber 
m_{p+1,1}\leq  m_{p,1}-2n_{p,1} \  \mathrm{if} \ n_{p,1}=n_{p+1,1}, \  1\leq  p\leq  r_{1}^{(1)}-1;\\
\nonumber 
m_{p,2}\leq  -n_{p,2}+ \sum_{q=1}^{r_{1}^{(1)}}\mathrm{min}\left\{2n_{q,1},n_{p,2}\right\}- \sum_{p>p'>0} 
2\mathrm{min}\{n_{p,2}, n_{p',2}\}, \ 1\leq  p \leq  r_{2}^{(1)};\\
m_{p+1,2} \leq   m_{p,2}-2n_{p,2} \  \mathrm{if} \ n_{p+1,2}=n_{p,2}, \ 1\leq  p \leq  r_{2}^{(1)}-1
\end{array}
\right\}.\end{align}
Obviously $\rho$ maps the set $B$ to a set  
\begin{equation*}
\mathfrak{B}_{W_{N(k{\Lambda}_{0})}}=\left\{bv_{N(k{\Lambda}_{0})}\left| b \in B\right.\right\}.
\end{equation*}
In the same way as the Proposition \ref{prop:S21}, we can prove
\begin{lem}\label{L3}
The set $B$ spans $W$.
\end{lem}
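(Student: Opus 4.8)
The quickest route is to observe that the set $B$ defined above coincides, as a set of monomials, with the set $B_{W_{N(k\Lambda_{0})}}$ of (\ref{eq:S210}), and that by construction $\rho(\pi(b))=bv_{N(k\Lambda_{0})}$ for every monomial $b$; hence $\rho(\pi(B))=\mathfrak{B}_{W_{N(k\Lambda_{0})}}$, which spans $W_{N(k\Lambda_{0})}$ by Proposition \ref{prop:S21}. Since $\rho$ is an isomorphism of vector spaces (see (\ref{84}) and the discussion preceding it), $\pi(B)$ spans $W$, which is exactly the assertion of the lemma. So, strictly, the lemma is Proposition \ref{prop:S21} transported along $\rho^{-1}$.

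If instead one wishes to argue directly inside $W=U(\mathcal{L}(\mathfrak{n}_{+}))/J$ — mirroring the proof of Proposition \ref{prop:S21}, itself modelled on \cite{G} — the plan is the following. Using the lemma $W_{N(k\Lambda_{0})}=Uv_{N(k\Lambda_{0})}$ (equivalently, the commutativity of the modes of a fixed colour together with the grouping of charge-$1$ quasi-particles into higher-charge ones), one reduces to showing that every polychromatic quasi-particle monomial $\pi\big(b(\alpha_{2})b(\alpha_{1})\big)$ lies in the span of $\pi(B)$, and this is then done by induction with respect to the linear order (\ref{eq:S23}) on monomials of a fixed colour-type $(r_{2},r_{1})$; the order is well-founded once $(r_{2},r_{1})$ and the total degree are fixed, as only finitely many monomials then occur. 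The reduction rests on three mechanisms: (i) Corollary \ref{cor:S21}, applied inside $b(\alpha_{1})$ and inside $b(\alpha_{2})$ separately, rewrites any pair of equal-charge quasi-particles violating $m_{p+1,i}\le m_{p,i}-2n_{p,i}$ as a combination of monomials obeying that difference condition together with monomials carrying a quasi-particle of strictly larger charge, the latter absorbed by the induction; (ii) since $\pi$ annihilates the left ideal $J=U(\mathcal{L}(\mathfrak{n}_{+}))\mathcal{L}(\mathfrak{n}_{+})_{\ge 0}$, any monomial in which some mode $x_{\alpha}(m)$ with $m\ge 0$ can be brought to the extreme right is $0$ in $W$, and combining this with Lemma \ref{lem:S21} forces the leading bounds $m_{p,1}\le -n_{p,1}-\sum_{p>p'>0}2\min\{n_{p,1},n_{p',1}\}$ modulo monomials smaller in the order (\ref{eq:S23}); (iii) Lemma \ref{lem:S235} (formula (\ref{al:S28})) lets the colour-$2$ generating function be moved past the colour-$1$ part at the cost of the factor $\prod_{p,q}\big(1-z_{q,1}/z_{p,2}\big)^{\min\{n_{p,2},2n_{q,1}\}}$ and a shift $z_{p,2}^{-\sum_{q}\min\{2n_{q,1},n_{p,2}\}}$, which, together with (ii), yields exactly $m_{p,2}\le -n_{p,2}+\sum_{q=1}^{r_{1}^{(1)}}\min\{2n_{q,1},n_{p,2}\}-\sum_{p>p'>0}2\min\{n_{p,2},n_{p',2}\}$, again modulo lower monomials. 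Iterating (i)--(iii) and invoking the induction hypothesis writes $\pi\big(b(\alpha_{2})b(\alpha_{1})\big)$ as a linear combination of elements of $\pi(B)$, which is what is needed; this is the analogue for $W$ of the passage from a PBW-type spanning set of $W$ to the quasi-particle spanning set.

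The step requiring the most care is (iii): one must check that the exponent produced by Lemma \ref{lem:S235} is precisely $\min\{n_{p,2},2n_{q,1}\}$ — the factor of two reflecting that $\alpha_{2}$ is a short root while $\alpha_{1}$ is long — and that repeated application produces exactly the self-interaction term $\sum_{p>p'>0}2\min\{n_{p,2},n_{p',2}\}$ and nothing larger; these facts are exactly the content of Lemmas \ref{lem:S232}--\ref{lem:S235}. Note also that, unlike in the proof for $W_{L(k\Lambda_{0})}$, no charge ceiling is imposed, since the relations (\ref{al:S15}) and (\ref{al:S16}) hold only on $L(k\Lambda_{0})$ and not on $W$ (cf. Remark \ref{S8N1}); accordingly the quasi-particles occurring in $B$ carry arbitrarily large charges.
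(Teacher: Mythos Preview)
Your proposal is correct. The paper's own proof is a single sentence — ``In the same way as the Proposition \ref{prop:S21}, we can prove'' — so your second route (mirroring the spanning argument of Proposition \ref{prop:S21} inside $W$) is exactly what the paper intends, and you have filled in considerably more detail than the paper itself provides. Your first route, transporting Proposition \ref{prop:S21} along the already-established isomorphism $\rho$, is slicker still and perfectly valid given that $\rho$ is shown to be an isomorphism in the paragraph preceding the lemma; the paper could have argued this way but chose instead to invoke a repetition of the spanning proof.
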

\begin{flushright}
$\square$
\end{flushright}
\par Fix $k' \in \mathbb{N}$ such that $k' \neq k$. Let $\rho_{k'}$ be a map $$\rho_{k'}: W\rightarrow W_{N(k'{\Lambda}_{0})},$$ 
defined by
$$ \rho_{k'}(\pi(b))=bv_{N(k\Lambda_0)},$$
where $b \in U(\mathcal{L}(\mathfrak{n}_{+}))$ and $\pi: U(\mathcal{L}(\mathfrak{n}_{+})) \rightarrow W$ projection on $W$. Since  $\rho_{k'}$ is isomorphism of $\mathcal{L}(\mathfrak{n}_{+})$-modules, we have: 
\begin{cor} The sets $\mathfrak{B}_{W_{N(k{\Lambda}_{0})}}$ and $\mathfrak{B}_{W_{N(k'{\Lambda}_{0})}}$ are equivalent in the sense 
$$\rho_{k'}\left(\rho^{-1}\left(\mathfrak{B}_{W_{N(k{\Lambda}_{0})}}\right)\right)=\mathfrak{B}_{W_{N(k'{\Lambda}_{0})}}.$$ 
\end{cor}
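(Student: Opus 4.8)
The plan is to prove this formally, simply by unwinding the definitions of the two quotient maps and of the sets $\mathfrak{B}_{W_{N(k\Lambda_{0})}}$, using only that $\rho$ and $\rho_{k'}$ are linear bijections factoring through one and the same level-independent space $W=U(\mathcal{L}(\mathfrak{n}_{+}))/J$. No input from the combinatorics of Sections~2 and~3 is needed; in particular I would not use that $B$ is linearly independent or spanning.

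First I would record that $\rho_{k'}\colon W\to W_{N(k'\Lambda_{0})}$ is an isomorphism of $\mathcal{L}(\mathfrak{n}_{+})$-modules, by exactly the argument already given for $\rho$: the map $b\mapsto bv_{N(k'\Lambda_{0})}$ from $U(\mathcal{L}(\mathfrak{n}_{+}))$ onto $W_{N(k'\Lambda_{0})}$ restricts to a linear isomorphism on $U(\mathcal{L}(\mathfrak{n}_{+})_{<0})$ by Poincar\'{e}-Birkhoff-Witt, it annihilates the left ideal $J$, and since $W\cong S(\mathcal{L}(\mathfrak{n}_{+})_{<0})\cong W_{N(k'\Lambda_{0})}$ as graded vector spaces---an identification that, by Remark~\ref{S8N1}, does not involve the central element and is therefore uniform in the level---the induced map $\rho_{k'}$ is bijective and satisfies $\rho_{k'}(\pi(b))=bv_{N(k'\Lambda_{0})}$ for every $b$. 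The analogous formula $\rho(\pi(b))=bv_{N(k\Lambda_{0})}$ holds for $\rho$.

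Next I would carry out the key computation. Because $\mathfrak{B}_{W_{N(k\Lambda_{0})}}=\{bv_{N(k\Lambda_{0})}:b\in B\}$, because $\rho(\pi(b))=bv_{N(k\Lambda_{0})}$, and because $\rho$ is bijective, the preimage is $\rho^{-1}\bigl(\mathfrak{B}_{W_{N(k\Lambda_{0})}}\bigr)=\{\pi(b):b\in B\}$, a fixed subset of $W$ that does not depend on $k$ since the set $B$ is defined by difference conditions on the energies $m_{p,i}$ that contain no reference to the level. Applying the bijection $\rho_{k'}$ then gives
\[
\rho_{k'}\bigl(\rho^{-1}(\mathfrak{B}_{W_{N(k\Lambda_{0})}})\bigr)=\{\rho_{k'}(\pi(b)):b\in B\}=\{bv_{N(k'\Lambda_{0})}:b\in B\}=\mathfrak{B}_{W_{N(k'\Lambda_{0})}},
\]
which is the claim.

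I do not expect any genuine obstacle: the statement is a formal consequence of the level-independence of $W$ together with the bijectivity of $\rho$ and $\rho_{k'}$. The two points deserving a moment's attention are, first, verifying that the inequalities defining $B$ carry no hidden dependence on $k$ (so that $\{\pi(b):b\in B\}$ is literally the same subset of $W$ whichever level we start from), and second, that Remark~\ref{S8N1} indeed makes the identification $W\cong S(\mathcal{L}(\mathfrak{n}_{+})_{<0})$ independent of the level, which is what lets the proof that $\rho$ is an isomorphism be reused verbatim for $\rho_{k'}$.
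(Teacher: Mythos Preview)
Your proposal is correct and matches the paper's approach: the paper states the corollary with no proof beyond a $\square$, relying precisely on the facts you invoke---that $\rho$ and $\rho_{k'}$ are $\mathcal{L}(\mathfrak{n}_{+})$-module isomorphisms (stated just before the corollary), that $B$ is defined without reference to the level, and that $\rho$ maps $\pi(B)$ to $\mathfrak{B}_{W_{N(k\Lambda_{0})}}$. Your write-up simply makes explicit the one-line computation the paper leaves to the reader.
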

\begin{flushright}
$\square$
\end{flushright}
\bigskip
\bigskip
\bigskip
 Now we can prove the Theorem \ref{thm:1}.

\begin{flushleft}
\textbf{\emph{Proof of the Theorem \ref{thm:1}:}}
\end{flushleft}
Since the set $\mathfrak{B}_{W_{N(k{\Lambda}_{0})}}$ spans the principal subspace $W_{N\left(k{\Lambda}_{0}\right)}$, all we have to prove is the linear independence of the set $\mathfrak{B}_{W_{N(k{\Lambda}_{0})}}$. Assume that 
\begin{equation}\label{S8T1}
\sum_{a \in A}
c_{a}b_av_{N(k\Lambda_{0})}=0, 
\end{equation}
where $A$ is a finite non-empty set. We assume that all monomials $b_a$ are the same color-type $(r_{2},r_{1})$. Let $b'$ be the largest monomial in (\ref{S8T1}) in the linear lexicographic ordering ``$<$'' (cf. (\ref{eq:S23})), such that $c'_{a} \neq 0$, of charge-type
 \begin{equation*}
 \left(n'_{r_{2}^{(1)},2}, \ldots, n'_{1,2};n'_{r_{1}^{(1)},1}, \ldots, n'_{1,1}\right)
 \end{equation*}
and dual-charge-type
\begin{equation*}
\left(r_{2}^{(1)},r_{2}^{(2)},\ldots , r_{2}^{(n'_{1,2})} ; r_{1}^{(1)}, r_{1}^{(2)}, \ldots , r_{1}^{(n'_{1,1})}  \right).
\end{equation*}
Since the map $\rho$ is isomorphism of $\mathcal{L}(\mathfrak{n}_{+})$-modules, we have
\begin{equation*}
\sum_{a \in A}
c_{a}b_a=0, 
\end{equation*}
where
\begin{align}
b_a\in  B.
\end{align}
Let $k'$ be a positive integer such that $$n'_{1,1}\leq k'$$ and $$\frac{1}{2}n'_{1,2}\leq k'.$$ 
Denote by  $W_{N\left(k'{\Lambda}_{0}\right)}=U(\mathcal{L}(\mathfrak{n}_{+}))v_{N(k'{\Lambda}_{0})}$ the principal subspace of $N\left(k'{\Lambda}_{0}\right)$. 
The map $\rho_{k'}$ 
$$\rho_{k'}: W\rightarrow W_{N(k'{\Lambda}_{0})}$$ 
maps the set $B$ to the set that spans $W_{N(k'{\Lambda}_{0})}$, so we have
\begin{equation*} 
\sum_{a \in A}
c_{a}b_av_{N(k'\Lambda_{0})}=0. 
\end{equation*}
With the surjective map 
$f_{k'\Lambda_0}:W_{N(k'{\Lambda}_{0})}\rightarrow W_{L(k'{\Lambda}_{0})}$, 
we get
\begin{equation*} 
\sum_{a \in A}
c_{a}b_av_{k'\Lambda_{0}}=0. 
\end{equation*}
From the Theorem \ref{S67T1}, we see that all the coefficients $c_a$ in (\ref{S8T1}) are equal to zero and our proof of linear independence is complete. 
\begin{flushright}
$\square$
\end{flushright}

\section{Character formulas}
The space $N(k{\Lambda}_{0})$ has certain gradings. The action of the operator $-d$ on $N(k{\Lambda}_{0})$ gives a grading by weight (see (\ref{eq:S11}) and (\ref{eq:S13})). The action of Cartan subalgebra $\mathfrak{h}$ on $N(k{\Lambda}_{0})$ provides the gradation on $N(k{\Lambda}_{0})$, which we call color-type gradation (see (\ref{eq:S22})).
\par We restrict these $\mathbb{Z}$-gradings to the principal subspaces $W_{L(k{\Lambda}_{0})}$ and $W_{N(k{\Lambda}_{0})}$.
Let $p_i$, $n_{p_i,i},\ldots , n_{1,i}$, (where $1\leq i \leq 2$), be non-negative integers.
For integers $m_{p_i,i}, \ldots, m_{1,i}$, the elements:
\begin{equation*} 
x_{n_{p_2,2}\alpha_{2}}(m_{p_2,2})\cdots x_{n_{1,2}\alpha_{2}}(m_{1,2})x_{n_{p_1,1}\alpha_{1}}(m_{p_1,1})\cdots
 x_{n_{1,1}\alpha_{1}}(m_{1,1})v_{k\Lambda_0}\end{equation*}
and
\begin{equation*}
x_{n_{p_2,2}\alpha_{2}}(m_{p_2,2})\cdots x_{n_{1,2}\alpha_{2}}(m_{1,2})x_{n_{p_1,1}\alpha_{1}}(m_{p_1,1})\cdots        x_{n_{1,1}\alpha_{1}}(m_{1,1})v_{N(k\Lambda_0)}
\end{equation*}
have weight
\begin{equation*} 
-m_{p_2,2}- \ldots - m_{1,2}-m_{p_1,1}- \ldots -m_{1,1}=-m.
\end{equation*}
Their color-type is
\begin{equation*}
(r_2, r_1),
\end{equation*} where
\begin{equation*} 
r_i=n_{p_{i},i}+ \ldots + n_{1,i}.\end{equation*}
\par Such vectors span a weight subspace which we denote as ${W_{L(k\Lambda_{0})}}_{(m,r_1,r_2)}$ and\\  ${W_{N(k\Lambda_{0})}}_{(m,r_1,r_2)}$.

\subsection{Characters of the principal subspace \texorpdfstring{$W_{L(k\Lambda_{0})}$}{WL(kLambda0)}}
Here we consider the graded dimensions of the principal subspace $W_{L(k\Lambda_{0})}$. Denote by $\text{ch} \ W_{L(k\Lambda_{0})}$ the     generating function of dimensions of homogeneous subspaces of $W_{L(k\Lambda_{0})}$:
 \begin{align}\label{S75}
 \text{ch} \ W_{L(k\Lambda_{0})}=\sum_{m,r_1,r_2\geq 0} 
\text{dim} \ {W_{L(k\Lambda_{0})}}_{(m,r_1,r_2)}q^{m}y^{r_1}_{1}y^{r_2}_{2},
\end{align}
where $q,\ y_1$ and $y_2$ are formal variables (cf. \cite{FLM}). This generating function is called the character of the principal         subspace $W_{L(k\Lambda_{0})}$.
\par We write down $\text{ch} \ W_{L(k\Lambda_{0})}$ in terms of the dual-charge-type para\-meters $r_i^{(s)}$. Therefore, we write         conditions in the definition of the set $\mathfrak{B}_{W_{L(k\Lambda_0)}}$ (\ref{SkupL}) in terms of $r_i^{(s)}$. More precisely, we use   the following expressions (\ref{S710}), (\ref{S711}), and (\ref{S712}) to determine the character of $W_{L(k\Lambda_{0})}$. It is easy to prove these expressions  by using induction on the level $k \in \mathbb{N}$ of the standard module $L(k\Lambda_0)$.
\begin{lem}\label{S7L1}
For the given color-type $(r_{2},r_{1})$, charge-type      
\begin{equation*}\left(n_{r_{2}^{(1)},2}, \ldots, n_{1,2};\right. \left. n_{r_{1}^{(1)},1}, \ldots , n_{1,1}\right)\end{equation*} and dual-charge-type   
\begin{equation*}
\left(r_{2}^{(1)},r_{2}^{(2)},\ldots ,r_{2}^{(2k)}; r_{1}^{(1)}, r_{1}^{(2)}, \ldots ,r_{1}^{(k)}\right), \end{equation*}
we have:
\begin{align}\label{S710}
\sum_{p=1}^{r^{(1)}_2}\sum_{q=1}^{r^{(1)}_1}\mathrm{min}\{n_{p,2},2n_{q,1}\}&=\sum_{s=1}^{k}r^{(s)}_1(r_2^{(2s-1)}+r_2^{(2s)}),&\\
\label{S711}\sum_{p=1}^{r_{1}^{(1)}} (\sum_{p>p'>0}2\mathrm{min} \{ n_{p,1},
n_{p',1}\}+n_{p,1})&= \sum_{s=1}^{k}r^{(s)^{2}}_{1},\\
\label{S712}\sum_{p=1}^{r_{2}^{(1)}} (\sum_{p>p'>0}2\mathrm{min} \{ n_{p,2},
n_{p',2}\}+n_{p,2})&= \sum_{s=1}^{2k}r^{(s)^{2}}_{2}.&
\end{align}
\end{lem}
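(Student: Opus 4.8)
The plan is to derive all three identities \eqref{S710}--\eqref{S712} from the single fact that, for each color $i$, the charge-type $(n_{r_i^{(1)},i},\ldots,n_{1,i})$ and the dual-charge-type $(r_i^{(1)},r_i^{(2)},\ldots)$ are conjugate partitions, so that $r_i^{(t)}=\#\{p:n_{p,i}\geq t\}$ for every $t\geq 1$; no induction on $k$ is actually needed. Two elementary reductions carry the argument. First, the quasi-particles being ordered so that $n_{r_i^{(1)},i}\leq\cdots\leq n_{1,i}$, for $p'<p$ one has $n_{p',i}\geq n_{p,i}$, hence $\min\{n_{p,i},n_{p',i}\}=n_{p,i}$ and
\[ \sum_{p>p'>0}2\min\{n_{p,i},n_{p',i}\}+n_{p,i}=(2p-1)\,n_{p,i}. \]
Second, for each $t\geq 1$ the set $\{p:n_{p,i}\geq t\}$ is an initial segment $\{1,\ldots,r_i^{(t)}\}$, and the defining inequalities of $B_{W_{L(k\Lambda_0)}}$ force $n_{1,1}\leq k$ and $n_{1,2}\leq 2k$, so $r_1^{(t)}=0$ for $t>k$ and $r_2^{(t)}=0$ for $t>2k$.

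For \eqref{S711} and \eqref{S712} I would write $n_{p,i}=\sum_{t\geq 1}\mathbf{1}_{\{n_{p,i}\geq t\}}$, interchange the summations over $p$ and $t$, and apply the two reductions together with $\sum_{p=1}^{m}(2p-1)=m^2$:
\[ \sum_{p=1}^{r_i^{(1)}}(2p-1)\,n_{p,i}=\sum_{t\geq 1}\ \sum_{p\,:\,n_{p,i}\geq t}(2p-1)=\sum_{t\geq 1}\sum_{p=1}^{r_i^{(t)}}(2p-1)=\sum_{t\geq 1}\bigl(r_i^{(t)}\bigr)^{2}. \]
The vanishing of $r_i^{(t)}$ for large $t$ truncates the outer sum to $t=1,\ldots,k$ when $i=1$ and to $t=1,\ldots,2k$ when $i=2$, which is precisely \eqref{S711} and \eqref{S712}.

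For \eqref{S710} I would use $\min\{a,b\}=\sum_{t\geq 1}\mathbf{1}_{\{a\geq t\}}\mathbf{1}_{\{b\geq t\}}$ together with the equivalence of $2n_{q,1}\geq t$ and $n_{q,1}\geq\lceil t/2\rceil$:
\[ \sum_{p=1}^{r_2^{(1)}}\sum_{q=1}^{r_1^{(1)}}\min\{n_{p,2},2n_{q,1}\}=\sum_{t\geq 1}\Bigl(\sum_{p}\mathbf{1}_{\{n_{p,2}\geq t\}}\Bigr)\Bigl(\sum_{q}\mathbf{1}_{\{n_{q,1}\geq\lceil t/2\rceil\}}\Bigr)=\sum_{t\geq 1}r_2^{(t)}\,r_1^{(\lceil t/2\rceil)}. \]
Since $\lceil t/2\rceil=s$ for both $t=2s-1$ and $t=2s$, grouping these two terms turns the right-hand side into $\sum_{s\geq 1}r_1^{(s)}\bigl(r_2^{(2s-1)}+r_2^{(2s)}\bigr)$, and $r_1^{(s)}=0$ for $s>k$ cuts the sum down to $s=1,\ldots,k$, giving \eqref{S710}.

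There is no real obstacle in this argument; the only points to watch are that the two parameter families are genuinely conjugate partitions (so that $\{p:n_{p,i}\geq t\}$ is an initial segment) and that the ranges of the outer sums on the right-hand sides, $s$ up to $k$ for color $1$ and up to $2k$ for color $2$, come from the level-$k$ bounds $n_{1,1}\leq k$, $n_{1,2}\leq 2k$ built into $B_{W_{L(k\Lambda_0)}}$ (for the generalized Verma module one would instead let $s$ range over all of $\mathbb{N}$). The induction on $k$ hinted at in the statement produces the same identities by removing the last column of the relevant Young diagram, i.e. decreasing $r_i^{(1)}$ by one, but the indicator-function computation above makes it unnecessary.
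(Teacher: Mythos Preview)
Your argument is correct. The three identities are indeed immediate consequences of the conjugacy between the charge-type and the dual-charge-type for each color, and your use of the indicator-function expansions $n_{p,i}=\sum_{t\geq 1}\mathbf{1}_{\{n_{p,i}\geq t\}}$ and $\min\{a,b\}=\sum_{t\geq 1}\mathbf{1}_{\{a\geq t\}}\mathbf{1}_{\{b\geq t\}}$ makes this completely transparent. The key step---that the monotonicity $n_{1,i}\geq n_{2,i}\geq\cdots$ forces $\{p:n_{p,i}\geq t\}=\{1,\ldots,r_i^{(t)}\}$ and reduces the inner sum in \eqref{S711}--\eqref{S712} to $\sum_{p=1}^{r_i^{(t)}}(2p-1)=(r_i^{(t)})^2$---is exactly the standard identity $\sum_p(2p-1)\lambda_p=\sum_t(\lambda'_t)^2$ for a partition and its conjugate. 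The pairing $\lceil t/2\rceil=s$ for $t\in\{2s-1,2s\}$ in \eqref{S710} is likewise clean.

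The paper, by contrast, gives no details at all: it merely asserts that the identities follow by induction on the level $k$. Your direct approach is genuinely different and strictly more informative. It isolates the identities as purely combinatorial facts about conjugate partitions, independent of any representation-theoretic input, and shows that the parameter $k$ enters only through the bounds $n_{1,1}\leq k$ and $n_{1,2}\leq 2k$ that truncate the sums on the right-hand side---a point you make explicitly and which also clarifies why the same formulas hold in the generalized Verma case with $k$ replaced by $\infty$. The inductive route hinted at in the paper would presumably peel off the quasi-particles of maximal charge, but your argument renders that unnecessary.
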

\begin{flushright}
$\square$
\end{flushright}
\par For $r>0$ set 
\begin{equation*}\frac{1}{(q)_r}=\frac{1}{(1-q)(1-q^2)\cdots (1-q^r)}.\end{equation*}
For this formal power series, we have a combinatorial interpretation:
\begin{align}\label{S7K}
\frac{1}{(q)_r}=\sum_{j\geq 0}p_r(j)q^j,
\end{align}
where $p_r(j)$ is the number of partition of $j$ with most $r$ parts 
(cf. \cite{A}).
\par Now, from the definition of the set $\mathfrak{B}_{W_{L(k\Lambda_0)}}$ and (\ref{S710}), (\ref{S711}), (\ref{S712}), (\ref{S7K}) follows the character formula:
\begin{thm}
\begin{align}\nonumber 
&\mathrm{ch} \  W_{L(k\Lambda_{0})}&\\
\nonumber
= \sum_{\substack{r^{(1)}_{1}\geq \ldots \geq r^{(k)}_{1}\geq 0}}
&\frac{q^{r^{(1)^{2}}_{1}+\cdots +r^{(k)^{2}}_{1}}}{(q)_{r^{(1)}_{1}-r^{(2)}_{1}}\cdots (q)_{r^{(k)}_{1}}}y^{r_1}_{1}&\\
\nonumber
\sum_{r^{(1)}_{2}\geq \ldots \geq r^{(2k)}_{2}\geq  0}&\frac{q^{r^{(1)^{2}}_{2}+\cdots +r^{(2k)^{2}}_{2}-r_{1}^{(1)}(r_{2}^{(1)}+r_{2}^{(2)})
-\cdots -r_{1}^{(k)}(r_{2}^{(2k-1)}+r_{2}^{(2k)})}}{(q)_{r^{(1)}_{2}-r^{(2)}_{2}}\cdots (q)_{r^{(2k)}_{2}}}
y^{r_2}_{2}.&\\
\nonumber
&& \ \ \ \ \ \ \ \ \ \ \ \ \ \ \ \ \ \ \ \ \ \ \ \ \ \ \ \ \square
\end{align}
\end{thm}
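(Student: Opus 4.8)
The plan is to read the character directly off the quasi-particle basis $\mathfrak{B}_{W_{L(k\Lambda_{0})}}$ furnished by Theorem \ref{S67T1}. Since that set is a basis, $\dim {W_{L(k\Lambda_{0})}}_{(m,r_1,r_2)}$ equals the number of monomials $b=x_{n_{r_2^{(1)},2}\alpha_{2}}(m_{r_2^{(1)},2})\cdots x_{n_{1,1}\alpha_{1}}(m_{1,1})$ in $B_{W_{L(k\Lambda_0)}}$ of color-type $(r_2,r_1)$ whose energies satisfy $\sum_{p,i}m_{p,i}=-m$, so that
\[
\text{ch}\ W_{L(k\Lambda_{0})}=\sum_{b\in B_{W_{L(k\Lambda_0)}}}q^{-\sum_{p,i}m_{p,i}}\,y^{r_1}_1y^{r_2}_2 .
\]
The first step is to sort this sum by the dual-charge-type $\mathfrak{R}=(r_2^{(1)},\dots,r_2^{(2k)};r_1^{(1)},\dots,r_1^{(k)})$ of $b$: this fixes the charge-type $(n_{p,i})$ as the conjugate partitions, hence also $r_i=\sum_s r_i^{(s)}$ and the monomial $y^{r_1}_1y^{r_2}_2$, and it leaves the color-$1$ energies and the color-$2$ energies constrained independently of one another, so the contribution of each $\mathfrak{R}$ factors as a color-$1$ factor times a color-$2$ factor.

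For color $i=1$ I would change variables from the energies $m_{p,1}$ ($1\le p\le r_1^{(1)}$) to the non-negative gaps
\[
k_{p,1}=-n_{p,1}-\sum_{p>p'>0}2\,\text{min}\{n_{p,1},n_{p',1}\}-m_{p,1}\ge 0,
\]
where the displayed inequality is exactly the first defining condition of $B_{W_{L(k\Lambda_0)}}$ in (\ref{SkupL}). A short computation shows that the quantity $-n_{p,1}-\sum_{p>p'>0}2\,\text{min}\{n_{p,1},n_{p',1}\}$ decreases by exactly $2n_{p,1}$ when one passes from index $p$ to $p+1$ with $n_{p,1}=n_{p+1,1}$; hence the second defining condition $m_{p+1,1}\le m_{p,1}-2n_{p,1}$ becomes simply $k_{p,1}\le k_{p+1,1}$, while no relation is imposed between $k_{p,1}$ and $k_{p+1,1}$ when $n_{p,1}\ne n_{p+1,1}$. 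Thus inside each of the blocks consisting of the $r_1^{(s)}-r_1^{(s+1)}$ quasi-particles of a fixed charge $s$ the gaps range over all weakly increasing sequences of non-negative integers, while distinct blocks are independent; by the combinatorial interpretation (\ref{S7K}) the generating function of such data is $\prod_{s=1}^{k}(q)_{r_1^{(s)}-r_1^{(s+1)}}^{-1}$ (with $r_1^{(k+1)}=0$). Combining this with $-\sum_p m_{p,1}=\sum_p\big(n_{p,1}+\sum_{p>p'>0}2\,\text{min}\{n_{p,1},n_{p',1}\}\big)+\sum_p k_{p,1}$ and identity (\ref{S711}), the color-$1$ factor comes out to be $q^{r^{(1)^{2}}_{1}+\dots+r^{(k)^{2}}_{1}}/\big((q)_{r^{(1)}_{1}-r^{(2)}_{1}}\cdots(q)_{r^{(k)}_{1}}\big)$.

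The color $i=2$ factor is produced in the same way, now with the gaps
\[
k_{p,2}=-m_{p,2}-n_{p,2}+\sum_{q=1}^{r_1^{(1)}}\text{min}\{2n_{q,1},n_{p,2}\}-\sum_{p>p'>0}2\,\text{min}\{n_{p,2},n_{p',2}\}\ge 0 ;
\]
since the extra summand $\sum_{q}\text{min}\{2n_{q,1},n_{p,2}\}$ is unchanged when $n_{p,2}=n_{p+1,2}$, the difference conditions again collapse to $k_{p,2}\le k_{p+1,2}$ inside each block of equal charge, and using identities (\ref{S712}) and (\ref{S710}) for the minimal total energy together with (\ref{S7K}) for the gaps yields the color-$2$ factor $q^{r^{(1)^{2}}_{2}+\dots+r^{(2k)^{2}}_{2}-r_1^{(1)}(r_2^{(1)}+r_2^{(2)})-\dots-r_1^{(k)}(r_2^{(2k-1)}+r_2^{(2k)})}/\big((q)_{r^{(1)}_{2}-r^{(2)}_{2}}\cdots(q)_{r^{(2k)}_{2}}\big)$. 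Multiplying the two factors, weighting by $y^{r_1}_1y^{r_2}_2$, and summing over all $r_1^{(1)}\ge\dots\ge r_1^{(k)}\ge 0$ and $r_2^{(1)}\ge\dots\ge r_2^{(2k)}\ge 0$ gives precisely the asserted formula. The only genuinely substantive point is the elementary verification that these ``$\text{min}$''-type upper bounds drop by exactly $2n_{p,i}$ between consecutive quasi-particles of equal charge, which is what converts the difference-two conditions into partitions with a prescribed bound on the number of parts; everything else is bookkeeping together with the three summation identities of Lemma \ref{S7L1}.
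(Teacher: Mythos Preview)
Your argument is correct and is exactly the approach the paper indicates: the paper's proof consists of the single sentence ``from the definition of the set $\mathfrak{B}_{W_{L(k\Lambda_0)}}$ and (\ref{S710}), (\ref{S711}), (\ref{S712}), (\ref{S7K}) follows the character formula'', and you have simply written out in full the change of variables to non-negative gaps and the verification that the difference-two conditions become weakly increasing sequences within each charge block, which is precisely what that sentence is gesturing at. There is no substantive difference between your proof and the paper's.
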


\subsection{Characters of the principal subspace \texorpdfstring{$W_{N(k\Lambda_{0})}$}{WN(kLambda0)}}
In the same way as in the previous subsection, we define the character of the principal subspace $W_{N(k\Lambda_{0})}$ as 
 \begin{align*}  
 \mathrm{ch} \ W_{N(k\Lambda_{0})}=\sum_{m,r_1,r_2\geq 0} 
\mathrm{dim} \ {W_{N(k\Lambda_{0})}}_{(m,r_1,r_2)}q^{m}y^{r_1}_{1}y^{r_2}_{2},
\end{align*}
where $q,\ y_1$ and $y_2$ are formal variables. 
\par From the definition of set $\mathfrak{B}_{W_{N(k\Lambda_0)}}$ and (\ref{S710}), (\ref{S711}), (\ref{S712}), (\ref{S7K}) follows the character formula of the principal subspace $W_{N(k\Lambda_0)}$:
\begin{thm}
\begin{align}\label{KN2}
&\mathrm{ch} \  W_{N(k\Lambda_{0})}&\\
\nonumber
=\sum_{\substack{r^{(1)}_{1}\geq \ldots \geq r^{(u)}_{1}\geq 0\\ u\geq  0}}&\frac{q^{r^{(1)^{2}}_{1}+\cdots +r^{(u)^{2}}_{1}}}{(q)_{r^{(1)}_{1}-r^{(2)}_{1}}\cdots (q)_{r^{(u)}_{1}}}y^{r_1}_{1}&\\
\nonumber
\sum_{\substack{r^{(1)}_{2}\geq \ldots \geq r^{(v)}_{2}\geq 0\\ v \geq 0}}&\frac{q^{r^{(1)^{2}}_{2}+\cdots   +r^{(v)^{2}}_{2}-r_{1}^{(1)}(r_{2}^{(1)}+r_{2}^{(2)})
-\cdots -r_{1}^{(u)}(r_{2}^{(2u-1)}
 +r_{2}^{(2u)})-\cdots -r_{1}^{(v)}(r_{2}^{(2v-1)}
 +r_{2}^{(2v)})}}{(q)_{r^{(1)}_{2}-r^{(2)}_{2}}\cdots (q)_{r^{(v)}_{2}}}y^{r_2}_{2}.&
\end{align}
\end{thm}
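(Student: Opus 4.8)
The plan is to read $\mathrm{ch}\,W_{N(k\Lambda_0)}$ directly off the basis $\mathfrak{B}_{W_{N(k\Lambda_0)}}$ supplied by Theorem \ref{thm:1}. Each basis vector $bv_{N(k\Lambda_0)}$ with $b\in B_{W_{N(k\Lambda_0)}}$ has weight $-m=m_{r_2^{(1)},2}+\cdots+m_{1,1}$ and color-type $(r_2,r_1)$, so the character is the termwise convergent sum $\sum_{b\in B_{W_{N(k\Lambda_0)}}}q^{-m}\,y_1^{r_1}y_2^{r_2}$. First I would group the terms by dual-charge-type $\mathfrak{R}=(r_2^{(1)},r_2^{(2)},\ldots;r_1^{(1)},r_1^{(2)},\ldots)$, equivalently by charge-type, which is constant on each such block; on a block every monomial carries the same $y$-factor $y_1^{r_1}y_2^{r_2}$ with $r_i=\sum_{s\ge1}r_i^{(s)}$, so what remains is to sum $q^{-m}$ over the admissible energy sequences $(m_{p,i})$ with the given charge-type.

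I would evaluate this inner sum by the ``deviation from the minimal monomial'' argument of \cite{FS} and \cite{G}. With the charge-type fixed, the greedy choice
\begin{equation*}
m_{p,1}=-n_{p,1}-\!\!\!\sum_{p>p'>0}\!\!\!2\min\{n_{p,1},n_{p',1}\},\qquad
m_{p,2}=-n_{p,2}+\sum_{q=1}^{r_1^{(1)}}\min\{2n_{q,1},n_{p,2}\}-\!\!\!\sum_{p>p'>0}\!\!\!2\min\{n_{p,2},n_{p',2}\}
\end{equation*}
is directly checked to saturate the conditions $m_{p+1,i}\le m_{p,i}-2n_{p,i}$ whenever $n_{p+1,i}=n_{p,i}$ (each defining bound being attained with equality), hence to be the unique energy-minimal monomial of the block. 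Writing a general admissible $m_{p,i}$ as this minimal value minus a nonnegative excess and restricting attention to the $r_i^{(n)}-r_i^{(n+1)}$ quasi-particles of color $i$ and charge $n$, one sees that their excesses form an arbitrary weakly increasing sequence of nonnegative integers, i.e. a partition with at most $r_i^{(n)}-r_i^{(n+1)}$ parts, and that distinct $(i,n)$-blocks are unconstrained by one another; by (\ref{S7K}) the combined contribution of all excesses to $\sum q^{-m}$ is $\prod_{i=1,2}\prod_{n\ge1}(q)_{r_i^{(n)}-r_i^{(n+1)}}^{-1}$.

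It then remains to identify the minimal weight of the block with the quadratic exponent of (\ref{KN2}). Summing $-m$ over the greedy monomial gives
\begin{equation*}
\sum_{p}\Big(n_{p,1}+\!\!\!\sum_{p>p'>0}\!\!\!2\min\{n_{p,1},n_{p',1}\}\Big)+\sum_{p}\Big(n_{p,2}+\!\!\!\sum_{p>p'>0}\!\!\!2\min\{n_{p,2},n_{p',2}\}\Big)-\sum_{p}\sum_{q}\min\{2n_{q,1},n_{p,2}\},
\end{equation*}
which by (\ref{S711}), (\ref{S712}) and (\ref{S710}) of Lemma \ref{S7L1} --- applied in the form valid for partitions of arbitrary length, the relevant one here since no bound is imposed on the charges in $N(k\Lambda_0)$ --- equals $\sum_{s\ge1}\big(r_1^{(s)}\big)^2+\sum_{s\ge1}\big(r_2^{(s)}\big)^2-\sum_{s\ge1}r_1^{(s)}\big(r_2^{(2s-1)}+r_2^{(2s)}\big)$. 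Hence the block with dual-charge-type $\mathfrak{R}$ contributes $q^{\,\sum_s(r_1^{(s)})^2+\sum_s(r_2^{(s)})^2-\sum_s r_1^{(s)}(r_2^{(2s-1)}+r_2^{(2s)})}\,y_1^{r_1}y_2^{r_2}\big/\prod_{i,n}(q)_{r_i^{(n)}-r_i^{(n+1)}}$.

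Finally I would sum over all dual-charge-types: such a type is a pair of partitions $r_1^{(1)}\ge r_1^{(2)}\ge\cdots\ge0$ and $r_2^{(1)}\ge r_2^{(2)}\ge\cdots\ge0$, and truncating each at its last nonzero part turns $\prod_{n\ge1}(q)_{r_i^{(n)}-r_i^{(n+1)}}$ into the finite products $(q)_{r_1^{(1)}-r_1^{(2)}}\cdots(q)_{r_1^{(u)}}$ and $(q)_{r_2^{(1)}-r_2^{(2)}}\cdots(q)_{r_2^{(v)}}$. Organizing the resulting double sum with the color-$2$ sum nested inside the color-$1$ sum reproduces (\ref{KN2}) verbatim (the computation for $W_{L(k\Lambda_0)}$ is identical, except that there $u=k$ and $v=2k$ are forced by (\ref{al:S15})--(\ref{al:S16})). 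The step I expect to demand the most care is keeping the mixed term $\sum_q\min\{2n_{q,1},n_{p,2}\}$ aligned with the correct dual-charge indices: this is precisely the content of (\ref{S710}), and it is what produces the coupling $-r_1^{(s)}(r_2^{(2s-1)}+r_2^{(2s)})$ between the two colors in the exponent; the rest is bookkeeping.
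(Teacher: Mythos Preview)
Your proposal is correct and follows exactly the approach the paper takes: the paper's proof is a one-line invocation of the basis $\mathfrak{B}_{W_{N(k\Lambda_0)}}$ together with the identities (\ref{S710})--(\ref{S712}) and (\ref{S7K}), and what you have written is precisely the detailed unpacking of that invocation (minimal monomial plus partition-of-excesses argument). Your remark that Lemma~\ref{S7L1} must be read in its unbounded-length form for the $N(k\Lambda_0)$ case is apt, since the paper states it with the $k$, $2k$ truncation but tacitly uses the general version here.
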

\begin{flushright}
$\square$
\end{flushright}
We can determine the character of principal subspace $W_{N(k{\Lambda}_{0})}$ using the Po\-incar\'{e}-Birk\-hoff-Witt theorem.
\par We set $\left\{x_{\alpha_{1}},x_{\alpha_{2}},x_{\alpha_{1}+\alpha_{2}},x_{\alpha_1+2\alpha_2}\right\}$ as a basis of the Lie subalgebra $\mathfrak{n}_{+}$ and order these basis elements as follows: \begin{equation*}x_{\alpha_{2}}<x_{\alpha_{1}}<x_{\alpha_{1}+\alpha_{2}}<x_{\alpha_1+2\alpha_2}.\end{equation*}
Set $\left\{x_{\alpha}(m): \alpha \in R_+, m <0\right\}$, so that this set is a basis of the Lie algebra $\mathcal{L}(\mathfrak{n}_+)_{<0}$. We choose the following total order on this set: 
\begin{equation*}x(m)\leq y(m')\Leftrightarrow  x< y \quad \text{or} \quad x=y \quad \text{and} \quad m< m'.
\end{equation*}
 By Poincar\'{e}-Birkhoff-Witt theorem, we obtain the base of the universal
enveloping algebra $U(\mathcal{L}(\mathfrak{n}_+)_{<0})$:
\begin{align}\label{102}
&x_{\alpha_2}(m^1_1)\cdots  x_{\alpha_2}(m^{s_1}_1)x_{\alpha_1}(m^1_2)\cdots  x_{\alpha_1}(m^{s_2}_2)&\\
\nonumber
&\ \ \ \ \ \ \ \ \ \ x_{\alpha_1+\alpha_2}(m^1_3)\cdots x_{\alpha_1+\alpha_2}(m^{s_3}_3)x_{\alpha_1+2\alpha_2}(m^1_4)\cdots   x_{\alpha_1+2\alpha_2}(m^{s_4}_4),& 
\end{align} with $m_i^1\leq \cdots \leq m_i^{s_i}$, $s_i \in \mathbb{N}$ for $i=1,2,3,4$. \\ 
It follows that the subspace $U(\mathcal{L}(\mathfrak{n}_+)_{<0})_{(m,r_1,r_2)}$ has basis (\ref{102}), where  
\begin{equation*}(m,r_1,r_2)=(\sum_{i=1}^4\sum_{j=1}^{s_i}m^j_i, s_2+s_3+s_4, s_1+s_3+2s_4).\end{equation*}
The  bijection map 
\begin{align}\nonumber
U(\mathcal{L}(\mathfrak{n}_+)_{<0})&\rightarrow  W_{N(k{\Lambda}_{0})}\\
\nonumber
b&\mapsto  bv_{N(k{\Lambda}_{0})}
\end{align}
maps weighted subspace $U(\mathcal{L}(\mathfrak{n}_+)_{<0})_{(m,r_1,r_2)}$ on ${W_{N(k{\Lambda}_{0})}}_{(m,r_1,r_2)}$. Thus, we also have
\begin{align}\label{KN3}
\textrm{ch} \ W_{N(k\Lambda_{0})}
=\prod_{m > 0} \frac{1}{(1-q^my_1)}\frac{1}{(1-q^my_2)}\frac{1}{(1-q^my_1y_2)}\frac{1}{(1-q^my_1y_2^2)}.
\end{align}
Now from (\ref{KN2}) and (\ref{KN3}) follows a new identity of Rogers-Ramanujan's type:
\begin{thm}
\begin{align}\nonumber
\prod_{m > 0}& \frac{1}{(1-q^my_1)}\frac{1}{(1-q^my_2)}\frac{1}{(1-q^my_1y_2)}\frac{1}{(1-q^my_1y_2^2)}&\\
\nonumber
= \sum_{\substack{r^{(1)}_{1}\geq \ldots \geq r^{(u)}_{1}\geq 0\\ u \geq 
0}}&\frac{q^{r^{(1)^{2}}_{1}+\cdots +r^{(u)^{2}}_{1}}}{(q)_{r^{(1)}_{1}-r^{(2)}_{1}}\cdots (q)_{r^{(u)}_{1}}}y^{r_1}_{1}&\\
\nonumber
\sum_{\substack{r^{(1)}_{2}\geq \ldots \geq r^{(v)}_{2}\geq 0\\v \geq 
0}}&\frac{q^{r^{(1)^{2}}_{2}+\cdots +r^{(v)^{2}}_{2}-r_{1}^{(1)}(r_{2}^{(1)}+r_{2}^{(2)})
-\cdots -r_{1}^{(u)}(r_{2}^{(2u-1)}
+r_{2}^{(2u)})-\cdots -r_{1}^{(v)}(r_{2}^{(2v-1)}
+r_{2}^{(2v)})}}{(q)_{r^{(1)}_{2}-r^{(2)}_{2}}\cdots (q)_{r^{(v)}_{2}}}y^{r_2}_{2}&
\end{align}
\end{thm}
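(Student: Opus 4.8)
The plan is to recognize both sides of the asserted identity as two expressions for one and the same formal character, namely $\mathrm{ch}\, W_{N(k\Lambda_0)}$, which by Remark~\ref{S8N1} and \eqref{84} does not actually depend on $k$; the right-hand side is its ``fermionic'' form, obtained from the quasi-particle basis, while the left-hand side is its ``product'' form, obtained from the Poincar\'e--Birkhoff--Witt basis. Equating the two then yields the stated Rogers--Ramanujan type identity.

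First I would derive the fermionic side, that is, formula~\eqref{KN2}. By Theorem~\ref{thm:1} the set $\mathfrak{B}_{W_{N(k\Lambda_0)}}$ is a basis of $W_{N(k\Lambda_0)}$, so $\mathrm{ch}\, W_{N(k\Lambda_0)}$ is exactly the generating function counting the monomials $b \in B_{W_{N(k\Lambda_0)}}$ by weight and color-type. I would organize this count by charge-type: fix a color-type $(r_2,r_1)$ and a dual-charge-type $\bigl(r_2^{(1)}, \dots, r_2^{(v)}; r_1^{(1)}, \dots, r_1^{(u)}\bigr)$; among all monomials with this data there is a minimal one whose total energy, computed using the three evaluations of Lemma~\ref{S7L1}, i.e.\ \eqref{S710}, \eqref{S711} and \eqref{S712}, equals the exponent appearing in the numerator of \eqref{KN2}. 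All other monomials of the same charge-type are obtained by independently lowering the energies of the quasi-particles of each fixed charge, subject to the difference-two conditions among equal charges; the generating function for these shifts factors, by the partition interpretation \eqref{S7K}, into the product of the $1/(q)_{r_i^{(s)}-r_i^{(s+1)}}$ factors. Summing over all admissible dual-charge-types --- equivalently over $u,v \geq 0$ and $r_1^{(1)} \geq \dots \geq r_1^{(u)} \geq 0$, $r_2^{(1)} \geq \dots \geq r_2^{(v)} \geq 0$ --- produces \eqref{KN2}, and for each monomial $y_1^{r_1}y_2^{r_2}$ only finitely many terms occur in any fixed $q$-degree, so the right side is a legitimate formal power series.

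Next I would derive the product side \eqref{KN3}. Since $b \mapsto b\, v_{N(k\Lambda_0)}$ is a linear isomorphism $U(\mathcal{L}(\mathfrak{n}_+)_{<0}) \to W_{N(k\Lambda_0)}$ preserving both the weight and the color-type gradings, it is enough to compute the character of $U(\mathcal{L}(\mathfrak{n}_+)_{<0})$. Ordering the root vectors as $x_{\alpha_2} < x_{\alpha_1} < x_{\alpha_1+\alpha_2} < x_{\alpha_1+2\alpha_2}$ and using the Poincar\'e--Birkhoff--Witt basis \eqref{102}, every basis monomial contributes $q^m y_1^{r_1} y_2^{r_2}$ with the weight/color-type data recorded after \eqref{102}; since the four families of generators $x_\alpha(m)$, $m<0$, carry $(y_1,y_2)$-bidegrees $(0,1)$, $(1,0)$, $(1,1)$ and $(1,2)$ respectively, the resulting product over these families is precisely the fourfold product in \eqref{KN3}. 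Comparing \eqref{KN2} and \eqref{KN3} finishes the proof.

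The substantive work is entirely absorbed into the two inputs I am using: the linear independence half of Theorem~\ref{thm:1}, which rests on the intertwining-operator, simple-current and Weyl-translation operators of Sections~\ref{ss:intert}--\ref{S66} and on the projection $\pi_{\mathfrak{R}}$; and the combinatorial evaluations of Lemma~\ref{S7L1}. Among the latter, the mixed-color identity \eqref{S710}, namely $\sum_{p,q}\min\{n_{p,2}, 2n_{q,1}\} = \sum_{s} r_1^{(s)}(r_2^{(2s-1)}+r_2^{(2s)})$, is the delicate one: it is here that the doubling coming from the short root $\alpha_2$ (the factor $2$, together with the pairing of $r_2^{(2s-1)}$ with $r_2^{(2s)}$) enters, and a careful induction on the level $k$ --- reading off how the box-diagram of the charge-type splits across the $k$ level-one tensor factors --- is the only step that is not routine bookkeeping. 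Granting those two facts, the identity is simply the equality of two computations of one graded dimension.
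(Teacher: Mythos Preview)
Your proposal is correct and follows essentially the same approach as the paper: both sides are identified as the character $\mathrm{ch}\,W_{N(k\Lambda_0)}$, the fermionic side coming from the quasi-particle basis via Theorem~\ref{thm:1}, Lemma~\ref{S7L1} and \eqref{S7K}, and the product side from the PBW basis \eqref{102} and the isomorphism $U(\mathcal{L}(\mathfrak{n}_+)_{<0})\cong W_{N(k\Lambda_0)}$. Your write-up is in fact somewhat more explicit than the paper's about how the minimal-energy monomial and the $1/(q)_{r_i^{(s)}-r_i^{(s+1)}}$ factors arise, but the logic is identical.
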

\begin{flushright}
$\square$
\end{flushright}


\begin{thebibliography}{9}
\bibitem[A]{A} G. E. Andrews, \textit{The theory of partitions}, Encyclopedia
of Mathematics and Its Applications, Vol. 2, Addison-Wesley, 1976. 
\bibitem[AKS]{AKS} E. Ardonne, R. Kedem and M. Stone, \textit{Fermionic characters of arbitrary highest-weight integrable
$\widehat{sl}_{r+1}$-modules}, Comm. Math. Phys. \textbf{264} (2006), 427-464; \url{ arXiv:math.RT/0504364}.
\bibitem[Cal1]{Cal1} C. Calinescu, \textit{Intertwining vertex operators and certain representations of $\widehat{sl(n)}$},
Commun. Contemp. Math. \textbf{10} (2008), 47-79; \url{arXiv:math:QA/0611534}.
\bibitem[Cal2]{Cal2} C. Calinescu, \textit{Principal subspaces of higher-level standard $\widehat{sl(3)}$-modules}, J. Pure
Appl. Algebra \textbf{210} (2007), 559-575; \url{arXiv:math.QA/0611540}.
\bibitem[CalLM1]{CalLM1} C. Calinescu, J. Lepowsky and A. Milas, \textit{Vertex-algebraic structure of the principal subspaces of
certain $A^{(1)}_{1}$-modules, I: level one case}, Int. J. Math. \textbf{19} (2008), 71-92; \url{arXiv.math.QA/0710.1527}.
\bibitem[CalLM2]{CalLM2} C. Calinescu, J. Lepowsky and A. Milas, \textit{Vertex-algebraic structure of the principal subspaces of
certain $A^{(1)}_{1}$ -modules, II: higher-level case}, J. Pure Appl. Algebra, \textbf{212} (2008), 1928-1950; \url{arXiv.math.QA/0710.1527}.
\bibitem[CalLM3]{CalLM3} C. Calinescu, J. Lepowsky and A. Milas, \textit{Vertex-algebraic structure of the principal subspaces of
level one modules for the untwisted affine Lie algebras of types A,D,E}, Journal of Algebra \textbf{323} (2010), 167-192; \url{ 	arXiv:math.QA/0908.4054}. 
\bibitem[CLM1]{CLM1} S. Capparelli, J. Lepowsky and A. Milas,\textit{ The Ro\-gers-Ra\-ma\-nu\-jan recur\-si\-on and
intertwining o\-perators}, Comm. Con\-tem\-po\-ra\-ry Math. \textbf{5} (2003), 947-966; \url{arXiv:math.QA/0211265}.
\bibitem[CLM2]{CLM2} S. Capparelli, J. Lepowsky and A. Milas, \textit{The Rogers-Selberg recursions, the Gordon-Andrews
identities and intertwining operators}, Ramanujan J. \textbf{12} (2006), 379-397; \url{arXiv:math.QA/0310080}.
\bibitem[DKKMM]{DKKMM} S. Dasmahapatra, R. Kedem, T. R. Klassen, B. McCoy  and E. Melzer:
\textit{Quasi-particles, conformal field theory and q-series.} Int. J. Mod. Phys.
\textbf{B7}, 3617 (1993); \url{arXiv:hep-th/9303013}.
\bibitem[DL]{DL} C. Dong and J. Lepowsky, \textit{Generalized vertex algebras and relative vertex operators}, Progress in
Mathematics  \textbf{112}, Birkh$\ddot{\text{a}}$user, Boston, 1993.
\bibitem[DLM]{DLM} C. Dong, H. Li and G. Mason, \textit{Simple currents and extensions of vertex operator
algebras}, Comm. Math. Physics \textbf{180} (1996), 671-707.
\bibitem[F]{F} E. Feigin, \textit{The PBW filtration}, Represent. Theory \textbf{13} (2009), 165-181; \url{arXiv:math.QA/0702797}.
\bibitem[FS]{FS} A. V. Stoyanovsky and B. L. Feigin, \textit{Functional models of the representations of
current algebras, and semi-infinite Schubert cells}, (Russian) Funktsional. Anal. i
Prilozhen. \textbf{28} (1994), no. 1, 68-90, 96; translation in Funct. Anal. Appl. \textbf{28} (1994),
no. 1, 55-72; preprint B. L. Feigin and A. V. Stoyanovsky, \textit{Quasi-particles models for the
representations of Lie algebras and geometry of flag manifold}; \url{arXiv:hep-th/9308079}.
\bibitem[FHL]{FHL} I. B. Frenkel, Y.-Z. Huang and J. Lepowsky,
\textit{On Axiomatic Approaches to Vertex Operator Algebras and Modules}, Memoirs of the Amer. Math. Soc. {\bf 104}, 1993.
\bibitem[FLM]{FLM} B. Frenkel, J. Lepowsky and A. Meurman,
{\em Vertex Operator Algebras and the Monster},
Pure and Appl. Math., \textbf{134}, Academic Press, Boston, 1988.
\bibitem[G]{G} G. Georgiev, \textit{Combinatorial constructions of modules for infinite-dimensional Lie algebras,
I. Principal subspace}, J. Pure Appl. Algebra \textbf{112} (1996), 247-286; \url{arXiv:hep-th/9412054}.
\bibitem[H]{H} J. Humphreys, \textit{Introduction to Lie Algebras and Their Representations}, Graduated
Texts in Mathematics, Springer-Verlag, New York, 1972. 
\bibitem[JP]{JP} M. Jerkovi\'{c} and M. Primc, \textit{Quasi-particle fermionic formulas for (k, 3)-admissible configurations}, to appear in Central European Journal of Mathematics; \url{arXiv:math.QA/1107.3900}.
\bibitem[K]{K} V. G. Kac, \textit{Infinite Dimensional Lie Algebras}, 3rd ed., Cambridge University Press, Cambridge, 
 1990.
\bibitem[LL]{LL} J. Lepowsky and H.-S. Li, \textit{Introduction to Vertex Operator Algebras and
Their Representations}, Progress in Math. Vol. \textbf{227}, Birkh$\ddot{\text{a}}$user,
Boston, 2003.
\bibitem[LP]{LP} J. Lepowsky and M. Primc, \textit{Standard modules for type one affine Lie algebras}, Lecture Notes in
Math. \textbf{1052} (1984), 194-251.
\bibitem[Li1]{Li1} H.-S. Li, \textit{Local systems of vertex operators, vertex superalgebras and modules},
J. Pure Appl. Alg. \textbf{109} (1996), 143-195; \url{arXiv:math.QA/9504022}.
\bibitem[Li2]{Li2} H.-S. Li, \textit{Certain extensions of vertex operator algebras of affine type}, Commun.
Math. Phys. \textbf{217} (2001), 653-696. 
\bibitem[MP]{MP} A. Meurman and M. Primc, \textit{Annihilating fields of standard mo\-dules
of $\widetilde{sl}(2, \mathbb{C})$ and combinatorial identities}, Memoirs Amer. Math. Soc. \textbf{137}, (1999);  \url{arXiv:math.QA/9806105}.
\bibitem[P]{P} M. Primc, \textit{Combinatorial basis of modules for affine Lie algebra $B_2^{(1)}$}, to appear in
Central European Journal of Mathematics; \url{arXiv:math.QA/ 1002.3535}.
\end{thebibliography}
\end{document}